\definecolor{revised}{RGB}{0,60,120}
\newtheorem*{rep@theorem}{\rep@title}
\newcommand{\newreptheorem}[2]{%
\newenvironment{rep#1}[1]{%
 \def\rep@title{#2 \ref{##1}}%
 \begin{rep@theorem}}%
 {\end{rep@theorem}}}
\let\oldtocsection=\tocsection
\let\oldtocsubsection=\tocsubsection
\let\oldtocsubsubsection=\tocsubsubsection
\renewcommand{\tocsection}[2]{\hspace{0em}\oldtocsection{#1}{#2}\vspace*{-1.5mm}}
\renewcommand{\tocsubsection}[2]{\hspace{1em}\oldtocsubsection{#1}{#2}\vspace*{-1.5mm}}
\renewcommand{\tocsubsubsection}[2]{\hspace{2em}\oldtocsubsubsection{#1}{#2}\vspace*{-1.5mm}}
\newtheorem{theorem}{Theorem}[section]
\theoremstyle{definition}\newtheorem*{definition}{Definition}
\theoremstyle{definition}\newtheorem{proposition}[theorem]{Proposition}
\theoremstyle{definition}\newtheorem{lemma}[theorem]{Lemma}
\theoremstyle{definition}\newtheorem{corollary}[theorem]{Corollary}
\theoremstyle{definition}
\theoremstyle{remark}\newtheorem{remark}[theorem]{Remark}
\newcommand{\length}{\mathcal{L}}
\newcommand{\s}{{\tilde s}}
\newcommand{\R}{\mathbb{R}}
\newcommand{\I}{\mathcal{I}}
\DeclareMathOperator{\dist}{dist}
\DeclareMathOperator{\sgn}{sgn}
\DeclareMathOperator{\grad}{grad}
\newcommand{\htwospace}{H^2(S^1,\R^n)}
\newcommand{\norm}[1]{{\left \Vert {#1}\right \Vert}}
\newcommand{\ltwo}[1]{{\left \Vert {#1}\right\Vert}_{L^2}}
\newcommand{\htwo}[1]{{\left \Vert {#1}\right\Vert}_{H^2}}
\newcommand{\abs}[1]{{\left \vert {#1}\right \vert}}
\newcommand{\ip}[1]{\left\langle {#1} \right\rangle }
\DeclareMathOperator{\E}{\mathcal{E}}
\DeclareMathOperator{\op}{\rm{op}}
\DeclareMathOperator{\rec}{\rm{rec}}
\DeclareMathOperator{\pr}{\rm{pr}}
\begin{document}

\title{Convergence of Sobolev gradient trajectories to elastica}

\author[S. Okabe]{Shinya Okabe }
\address[S. Okabe]{Mathematical Institute, Tohoku University,
Aoba, Sendai 980-8578, Japan.}
\email{\tt shinya.okabe@tohoku.ac.jp}
\thanks{The first author is supported in part by JSPS KAKENHI Grant Numbers JP19H05599, JP20KK0057 and JP21H00990.}

\author[P. Schrader]{Philip Schrader}
\address[P. Schrader]{Mathematical Institute, Tohoku University,
Aoba, Sendai 980-8578, Japan.}
\email{\tt philschrad@gmail.com}

\thanks{The second author is supported by an International Postdoctoral Research Fellowship of the Japan Society for the Promotion of Science and JSPS KAKENHI Grant Number JP19F19710.}

\keywords{$H^2(ds)$ Sobolev gradient flow, elastic energy, \L ojasiewicz--Simon gradient inequality, full convergence.}

\subjclass[2020]{\emph{Primary:} 53E99, \emph{Secondary:} 58E99, 58B20}

\date{}
\maketitle

\begin{abstract}
In this paper we study the $H^2(ds)$-gradient flow for the modified elastic energy defined on closed immersed curves in $\mathbb{R}^n$. 
We prove the existence of a unique global-in-time solution to the flow and establish full convergence to elastica by way of a \L ojasiewicz--Simon gradient inequality. \\

\emph{Mathematics subject classification (2020): 53E99, 58E99, 58B20}
\end{abstract}

\maketitle

\tableofcontents

\section{Introduction}

This paper is concerned with a Sobolev gradient flow for the modified elastic energy defined on closed immersed curves $\gamma : S^1 \to \R^n$: 
\begin{equation} \label{energy}
	\E(\gamma):= \int_\gamma k^2 \, ds + \lambda^2 \length(\gamma), 
\end{equation}
where $n \in \mathbb{N}_{\ge 2}$; $\lambda$ is a nonzero constant; $\length(\gamma)$, $k$ and $s$ denote the length, the curvature and the arc length parameter of $\gamma$, respectively; and throughout the paper $S^1$ is identified with $[0,1]$ modulo endpoints.
In the case $n=2$ the critical points of $\E$ are the classical Euler--Bernoulli elastica.

 A variety of gradient flows toward elastica have been studied by other authors,
beginning with the curve-straightening flow studied by Langer and Singer for closed curves in $\R^3$ \cite{Langer-Singer_1985} and Riemannian manifolds \cite{Langer-Singer_1987}. Langer and Singer consider the restriction of the total squared curvature to constant speed curves. Under this restriction the total squared curvature of a curve is equivalent to the Dirichlet energy of its tangent indicatrix. Langer and Singer show that the Dirichlet energy satisfies the Palais--Smale condition on a Hilbert manifold of tangent indicatrices of Sobolev class $H^1$. Consequences of this are existence for all positive time as well as sub-convergence (convergence of a subsequence) of the associated gradient flow, where the gradient is defined by the $H^1$-metric on indicatrices.

The work of Langer and Singer was extended in several directions by Linn\'er
 \cite{Linner_1989,Linner:1991aa} and also Wen who studied the $L^2$-gradient flow of Dirichlet energy on indicatrices \cite{Wen_1993} and then the $L^2(ds)$-gradient flow of the total squared curvature on constant speed planar curves \cite{Wen_1995}. In both cases Wen proved that in the case of smooth initial data with non-zero winding number, solutions exist for all positive time and converge to circles. Around the same time Koiso \cite{Koiso_1996} obtained similar results for unit speed space curves, and Polden \cite{P_1996} considered the $L^2(ds)$-flow of the \emph{modified} elastic energy $\E$ for planar curves without constraint. Polden found that  given smooth initial data the flow exists globally and subconverges modulo translations to critical points of the energy. Dziuk et. al. \cite{DKS_2000}  extended the results of Wen and Polden to closed curves in $\R^n$.

More recently the $L^2(ds)$-gradient flow for $\E$ has become known as the elastic flow and has been studied on spaces of closed curves with area constraint \cite{O_2007,O_2008}, open curves with boundary conditions  \cite{Lin_2012,DLP_2017,Spener_2017}, non-compact curves \cite{NO_2014} and planar networks \cite{dall2020elastic}. For a more complete list of references we refer to the recent survey \cite{MPP_2021}.
Standard results are solvability of the elastic flow for smooth initial curve and subconvergence of solutions to elastica. There is room for improvement on both fronts. With initial data in the energy class $H^2$, well-posedness of the elastic flow with boundary conditions was proved in \cite{RS_2020} by way of analytic semigroup theory; and for the more general $p$-elastic flow (which is the elastic flow when $p=2$) \cite{blatt2022minimising,NP_2020,OPW,OW_2021} show existence of weak solutions by the minimizing movements method.  
Stronger convergence results are obtained in \cite{DallAcqua:2016aa,MP_2021} using \L ojasiewicz--Simon gradient inequalities. We note however that except for some special cases of weak solutions to the $p$-elastic flow, the existence results do not include uniqueness. Moreover, the convergence results are always up to reparametrisation and sometimes translation\footnote{\cite{Koiso_1996,O_2007,O_2008} proved full convergence without any translation corrections 
due to the invariance of center of gravity of curves under the elastic flow with the inextensibility condition, but the inextensibility condition fixes parametrisation. In \cite{MP_2021} translation assumptions are neatly avoided without constraints, but reparametrisation is still required.} . 

In this paper we introduce a new gradient flow for $\E$. We prove the existence of unique global-in-time strong solutions starting from initial curves in the energy space $H^2$ and full convergence of gradient trajectories to elastica without any corrections to translation or parametrisation. 
Namely, we consider the Cauchy problem for the $H^2(ds)$-gradient flow for $\E$ defined on closed curves in $\R^n$: 
\begin{align}  \label{eq:GF} \tag{GF} 
\begin{cases}
& \partial_t \gamma = - \grad \E_\gamma, \\
& \gamma(\cdot, 0)= \gamma_0(\cdot). 
\end{cases}
\end{align}
Here, $\grad \E_\gamma$ denotes the $H^2(ds)$-gradient for $\E$ at $\gamma$ (for the precise definition and its expression, see Section \ref{section:formulation}), i.e. the gradient of $\E$ with respect to the inner product  
\begin{equation}\label{h2ds'}
 \ip{v,w}_{H^2(ds),\gamma}:=\int_\gamma \ip{v,w}\, ds+\int_\gamma \ip{v_s,w_s}\, ds+\int_\gamma\ip{v_{ss},w_{ss}} ds
\end{equation}
for variations $v,w\in H^2(S^1,\R^n)$ along $\gamma$.
We consider initial data $\gamma_0$ in the space of $H^2$ immersions
\[ \I^2(S^1,\R^n):= \{ \gamma\in H^2(S^1,\R^n): |\gamma'(u)|> 0 \} \]
which is an open subset of $H^2(S^1,\R^n)$ (see Lemma \ref{superlemma} (i)). 
The main result of this paper is as follows:

\begin{theorem} \label{maintheorem}
Let $\gamma_0 \in \I^2(S^1,\R^n)$. 
Then problem \eqref{eq:GF} possesses a unique global-in-time solution $\gamma$ in the class $C^1([0,\infty),\I^2(S^1,\R^n))$. 
Moreover, the solution $\gamma$ converges to an elastica as $t \to \infty$ in the $H^2$-topology. 
\end{theorem}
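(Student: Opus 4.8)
The plan is to treat \eqref{eq:GF} as an ordinary differential equation on the open set $\I^2(S^1,\R^n)\subset\htwospace$, prove global existence and uniqueness by combining the contraction mapping principle with energy and geometry a priori estimates, and then obtain full convergence by coupling a \L ojasiewicz--Simon gradient inequality to the invariance of \eqref{eq:GF} under reparametrisation and translation. The hard part will be the \L ojasiewicz--Simon inequality: since $\E$ is reparametrisation invariant, the Hessian of $\E$ at a critical point annihilates the infinite-dimensional space of tangential variations, so the standard Fredholm-based machinery does not apply verbatim. First I would record that $\gamma\mapsto\grad\E_\gamma$ is real-analytic from $\I^2(S^1,\R^n)$ into $\htwospace$: $\E$ is real-analytic there (its integrand is rational in $\gamma',\gamma''$ with denominators powers of $|\gamma'|>0$), the inner product $\ip{\cdot,\cdot}_{H^2(ds),\gamma}$ depends real-analytically on $\gamma$ and, for an immersion, is equivalent to the flat one, so its Riesz inversion is real-analytic too. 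The Picard--Lindel\"of theorem then gives a unique maximal solution $\gamma\in C^1([0,T^{*}),\I^2(S^1,\R^n))$. Along the flow $\tfrac{d}{dt}\E(\gamma(t))=-\norm{\grad\E_{\gamma(t)}}_{H^2(ds),\gamma(t)}^{2}$, so $\E(\gamma(t))\le\E(\gamma_0)=:\E_0$ on $[0,T^{*})$; this bounds $\length(\gamma(t))\le\E_0/\lambda^2$ and $\int_{\gamma(t)}k^2\,ds\le\E_0$, while Fenchel's theorem ($\int_\gamma k\,ds\ge2\pi$) bounds $\length(\gamma(t))\ge(2\pi)^2/\E_0$ from below. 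With the length thus pinched the Sobolev constant for $H^1(ds)\hookrightarrow C^0$ is controlled, so from $\partial_t\log|\gamma'|=\ip{T,(\partial_t\gamma)_s}$ (with $T$ the unit tangent) one controls $|\gamma'(\cdot,t)|$ above and below, and a Gronwall argument controls $\htwo{\gamma(t)}$, all on $[0,T^{*})$ in terms of $T^{*}$ and $\int_0^{T^{*}}\norm{\partial_t\gamma}_{H^2(ds)}\le\sqrt{T^{*}\E_0}$. If $T^{*}<\infty$ these bounds force $\gamma(t)$ to converge in $\htwospace$ as $t\to T^{*}$ to a limit lying in $\I^2(S^1,\R^n)$, so the solution continues past $T^{*}$, contradicting maximality; hence $T^{*}=\infty$, and uniqueness is inherited from Picard--Lindel\"of.

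Next comes subconvergence. Since $\int_0^{\infty}\norm{\grad\E_{\gamma(t)}}_{H^2(ds)}^2\,dt=\E_0-\E_{\infty}<\infty$ with $\E_{\infty}:=\lim_{t\to\infty}\E(\gamma(t))$, there is $t_j\to\infty$ with $\norm{\grad\E_{\gamma(t_j)}}_{H^2(ds)}\to0$. Reparametrising $\gamma(t_j)$ to constant speed and translating appropriately, the a priori bounds make the resulting curves $\tilde\gamma_j$ bounded in $\htwospace$, so a subsequence converges weakly in $\htwospace$ and strongly in $C^1$ to some $\gamma_\infty\in\I^2(S^1,\R^n)$. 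Reparametrisation invariance of $\E$, of the metric, and hence of $\grad\E$ gives $\grad\E_{\gamma_\infty}=0$, so $\gamma_\infty$ is a critical point of $\E$; a Palais--Smale-type computation (testing the vanishing gradient against $\tilde\gamma_j-\gamma_\infty$) upgrades the convergence to strong convergence in $\htwospace$, whence $\E(\gamma_\infty)=\E_{\infty}$.

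The technical core is to prove that there exist $\rho,C>0$ and $\theta\in(0,\tfrac12]$ with
\[
|\E(\eta)-\E(\gamma_\infty)|^{1-\theta}\le C\,\norm{\grad\E_\eta}_{H^2(ds),\eta}\qquad\text{whenever }\htwo{\eta-\gamma_\infty}<\rho.
\]
Because $\E$ is reparametrisation (and translation) invariant, $D(\grad\E)_{\gamma_\infty}$ annihilates $\{hT:h\in H^2(S^1,\R)\}$ and is not Fredholm on $\htwospace$; following the by-now established adaptation of the \L ojasiewicz--Simon framework to this degenerate situation, I would restrict to a slice transverse to the reparametrisation orbit — on which the analytic functional $\E$ has a Fredholm Hessian, its leading part a fourth-order elliptic operator — prove the inequality there via the analytic implicit function theorem, and spread it to a full $\htwospace$-neighbourhood of $\gamma_\infty$ using the invariances together with the fact that $\grad\E$ is horizontal (orthogonal to the orbit). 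I expect this to be the main obstacle of the paper.

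To conclude, let $\Phi_j$ denote the fixed reparametrisation-and-translation from the subconvergence step, so $\Phi_j(\gamma(t_j))\to\gamma_\infty$ in $\htwospace$, and set $\hat\gamma_j(t):=\Phi_j(\gamma(t_j+t))$. Since \eqref{eq:GF} is equivariant under reparametrisation and translation, $\hat\gamma_j$ is again a global solution, with $\E(\hat\gamma_j(t))=\E(\gamma(t_j+t))\downarrow\E_\infty=\E(\gamma_\infty)$ and $\norm{\partial_t\hat\gamma_j}_{H^2(ds)}=\norm{\grad\E_{\hat\gamma_j}}_{H^2(ds)}$. Fix $j$ so large that $\hat\gamma_j(0)$ lies within $\rho/2$ of $\gamma_\infty$ in $\htwospace$ and $(\E(\gamma(t_j))-\E_\infty)^{\theta}$ is as small as we like. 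As long as $\hat\gamma_j(t)$ stays in the $\rho$-ball about $\gamma_\infty$, the \L ojasiewicz--Simon inequality and $\tfrac{d}{dt}\E(\hat\gamma_j(t))=-\norm{\partial_t\hat\gamma_j(t)}_{H^2(ds)}^{2}$ yield $\tfrac{d}{dt}\big(\E(\hat\gamma_j(t))-\E_\infty\big)^{\theta}\le-\tfrac{\theta}{C}\norm{\partial_t\hat\gamma_j(t)}_{H^2(ds)}$, so the $H^2(ds)$-length of $\hat\gamma_j|_{[0,t]}$ is at most $\tfrac{C}{\theta}(\E(\gamma(t_j))-\E_\infty)^{\theta}$; the usual continuity argument then shows $\hat\gamma_j(t)$ never leaves the $\rho$-ball, where the $H^2(ds)$- and $\htwospace$-norms are equivalent. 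Hence $\int_0^{\infty}\norm{\partial_t\hat\gamma_j}_{H^2(ds)}\,dt<\infty$, $\hat\gamma_j(t)$ is Cauchy and converges in $\htwospace$; since $\Phi_j$ is a fixed homeomorphism of $\htwospace$, $\gamma(t)$ itself converges in $\htwospace$ as $t\to\infty$, and the limit is a critical point of $\E$ — an elastica — because $\grad\E_{\gamma(t)}=-\partial_t\gamma(t)\to0$, with no correction to translation or parametrisation. The a priori estimates and the Palais--Smale argument above are routine but delicate precisely because the constants relating the $\htwospace$- and $H^2(ds)$-norms degenerate as $\min|\gamma'|\to0$; this, together with the degenerate-Hessian issue in the \L ojasiewicz--Simon step, is where the real work lies.
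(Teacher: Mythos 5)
Your proposal is correct in outline and shares the paper's overall architecture: \eqref{eq:GF} is treated as a Banach-space ODE solved by Picard--Lindel\"of, a \L ojasiewicz--Simon inequality is obtained on a slice of the reparametrisation action and spread by symmetry, constant-speed representatives subconverge to a critical point, and the final trapping argument is run along a \emph{fixed} reparametrisation-and-translation of the flow (identified via uniqueness and equivariance), so that no correction appears in the limit. It diverges from the paper at two junctures, in both cases viably. For global existence the paper derives no pointwise bounds on $\abs{\gamma'}$ and no Gronwall estimate: it invokes the metric completeness of $(\I^2(S^1,\R^n),\dist)$ for the $H^2(ds)$-distance (Theorem \ref{bruv2}) together with Palais' finite-length continuation argument (Lemma \ref{finitelengthlimit}), which in particular guarantees that the limit at a finite maximal time is still an immersion; your route replaces this imported completeness theorem by direct estimates (length pinched by energy decay and Fenchel, $\partial_t\log\abs{\gamma'}$ controlled by the $H^2(ds)$-speed, Gronwall for $\htwo{\gamma}$), which works because only finite-time non-degeneration is needed. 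For compactness, the paper proves a Palais--Smale-type property on the submanifold $\Omega$ of arclength-proportionally parametrised curves via the auxiliary coercive functional $J$, a projection onto $T_\alpha\Omega$ built from a controllability Gramian, and the appendix estimates; your simpler suggestion of testing the small differential directly against $\tilde\gamma_j-\gamma_\infty$ does work for constant-speed curves, since there the leading part of $d\E$ is the flat form $\tfrac{2}{\length^3}\int\ip{\gamma'',V''}\,du$ and the remaining terms are controlled by $C^1$-convergence. The slice you describe generically (normal graphs, as in the cited literature) is in the paper the submanifold $\Omega$ itself, with analyticity, Fredholmness of $d^2(\E|\Omega)$ and an abstract gradient inequality applied in a chart; the extension to all of $\I^2$ hinges on a point your sketch leaves implicit, namely $H^2$-continuity of the reparametrisation projection at the critical point (Lemma \ref{Pconts}), which uses the extra regularity of critical points (Proposition \ref{critsmooth}). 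Two small patches: the limit should be identified as a critical point via the subsequence $t_i$ with $\norm{\grad\E_{\gamma(t_i)}}_{H^2(ds)}\to 0$ and local Lipschitz continuity of $\grad\E$ (finite $H^2(ds)$-length alone does not give $\partial_t\gamma\to 0$); and the ``horizontality'' of $\grad\E$, while true, is not what the extension argument actually uses --- what is used is the isometry of the reparametrisation action together with the comparison $\norm{d\E_\alpha}_{(H^2)^*}\leq C\norm{\grad\E_\gamma}_{H^2(ds)}$.
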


Since it is defined by the $H^2(ds)$-metric, $\grad\E_\gamma$ is also of class $H^2$ and so \eqref{eq:GF} is an ODE in the Sobolev space $H^2(S^1,\R^n)$. 
Thus we prove the existence of local-in-time solutions to problem \eqref{eq:GF} by the use of the generalized (to Banach space) Picard--Lindel\"of theorem (Proposition~\ref{shortexistence}). 
Moreover, thanks to the metric completeness of the space $\I^2(S^1,\R^n)$ with respect to the $H^2(ds)$-Riemannian metric  (see \cite[Theorem 4.3]{Bruveris:2015aa}), 
the proof of the existence of global-in-time solutions follows from a simple adaptation of the method  used in \cite[Theorem 9.1.6]{Palais:1988fv}.

The metric completeness is also a key ingredient in the proof of convergence: we use a \L ojasiewicz--Simon gradient inequality to show that the $H^2(ds)$-length of a gradient trajectory is finite, and therefore converges by completeness. Proving that such an inequality holds for $\E$ is complicated by the fact that, due to reparametrisation invariance, the second derivative of $\E$ has an infinite dimensional nullspace and therefore cannot be Fredholm. This can be overcome by restricting the energy to a submanifold consisting of curves which form a cross-section of the reparametrisation symmetry, proving that a \L ojasiewicz--Simon inequality holds for this restriction, and then extending the inequality by symmetry. This is the method used in e.g. \cite{chill2009willmore},\cite{DallAcqua:2016aa} and \cite{MP_2021} where the chosen submanifold consists of normal graphs over the critical point. Here we instead restrict to the submanifold of arc length proportionally parametrised curves. Even in finite codimension, verifying a \L ojasiewicz-Simon gradient inequality on a submanifold is far from trivial (see \cite{rupp2020lojasiewicz}), but here we are helped by the fact that the restriction of $\E$ to arc length proportionally parametrised curves takes a simple form.

We remark that the $H^2(ds)$-gradient flow for $\E$ is different from the flow considered by Langer and Singer \cite{Langer-Singer_1985,Langer-Singer_1987} and Linn\'er \cite{Linner_1989,Linner_2003}. These authors carry out their analysis using an $H^1$-metric on a space of indicatrices, which is effectively $H^2$ on the space of curves but without measuring the zeroth order product of variations -- the first summand in \eqref{h2ds'}. As observed by Wen \cite{Wen_1995} and then clearly demonstrated by  Linn\'er \cite{Linner_2003}, different representations of the space of curves and different choices for the metric on these spaces result in geometrically distinct flows. 

Finally, we mention that Sobolev gradient flows for other geometric functionals have been studied, e.g. the $H^1(ds)$-curve shortening flow \cite{Schrader:2021aa}, 
the $H^2$-elastic flow for graphs with an obstacle \cite{Muller_2020}, and fractional Sobolev gradient flows for knot energies \cite{Reiter-Schumacher_2020, Knappmann:2021aa}.
We also mention the comprehensive book by Neuberger \cite{Neuberger:2010aa} which discusses many applications as well as the numerical advantages of Sobolev gradient flows. 

The rest of this paper is organized as follows:  
In Section~\ref{section:formulation} we give the precise formulation of the $H^2(ds)$-gradient flow for $\E$ and introduce some notation.  
In Section~\ref{section:existence} we prove the existence of a unique global-in-time solution of \eqref{eq:GF}. 
We prove the full convergence of solutions to \eqref{eq:GF} as follows: 
in Section~\ref{subsection:analyticity} we verify the analyticity of the functional $\E$; 
in Section~\ref{subsection:al-para-curves} we introduce the submanifold of arc length proportionally parametrised curves, and in Section~\ref{subsection:LSineq} we prove a \L ojasiewicz--Simon gradient inequality on this submanifold which we then extend to a \L ojasiewicz--Simon gradient inequality on all of $\I^2(S^1,\R^n)$; 
finally we prove full convergence of solutions to an elastica by way of the \L ojasiewicz--Simon gradient inequality in Section~\ref{subsection:full-limit-convergence}.\\

\noindent\textbf{Acknowledgements.}
The authors are very grateful to the anonymous referees for their careful reading and for many valuable suggestions, and the second author would like to thank Glen Wheeler for helpful discussions. Most of the work in this paper was completed while the second author was a JSPS postdoctoral fellow at Tohoku University, and he wishes to express his gratitude for the kind hospitality of the staff at Tohoku University and in the overseas fellowship division of the JSPS.

\section{Formulation} \label{section:formulation}
\subsection{The $H^2(ds)$-gradient}
We derive the $H^2(ds)$-gradient for the modified elastic energy $\E$ \eqref{energy} defined on closed immersed curves $\gamma \in \I^2(S^1,\R^n)$. 
For $v,w\in \htwospace$ define the $H^2(ds)$-inner product by 
\begin{equation}\label{h2ds}
 \ip{v,w}_{H^2(ds),\gamma}:=\ip{v,w}_{L^2(ds),\gamma}+\ip{v_s,w_s}_{L^2(ds),\gamma}+ \ip{v_{ss},w_{ss}}_{L^2(ds),\gamma} 
\end{equation}
with 
\begin{equation}\label{l2ds}
\ip{v,w}_{L^2(ds),\gamma}:= \int_\gamma \ip{v,w}\, ds,  
\end{equation}
where $\ip{\cdot,\cdot}$ denotes the Euclidean product.
From now on we will omit the subscript $\gamma$ from the $H^2(ds),L^2(ds)$ products unless it is needed. 
Because they depend on the base curve  $\gamma$ the $L^2(ds)$ and $H^2(ds)$ products \eqref{h2ds}, \eqref{l2ds} are \emph{Riemannian metrics} on $\I^2(S^1,\R^n)$.
The $H^2(ds)$-norm is equivalent to the usual $H^2$-norm but the constants $c_1, c_2$ in
\[ c_1\norm{v}_{H^2}\leq \norm{v}_{H^2(ds)}\leq c_2\norm{v}_{H^2}\]
will depend on $\gamma$. Indeed from $v_s=\frac{v'}{\abs{\gamma'}}$ and $v_{ss}=\frac{1}{\abs{\gamma'}^2}v''-\frac{\ip{\gamma'',\gamma'}}{\abs{\gamma'}^4}v'$, setting $c_0=(\min \abs{\gamma'})^{-1}$
\begin{align*}
\norm{v}_{H^2(ds)}^2 &\leq  \norm{\gamma'}_{L^\infty}\norm{v}_{L^2}^2+c_0\norm{v'}^2_{L^2}+c_0^3\norm{v''}^2_{L^2}+c_0^7\norm{\gamma'}_{L^\infty}^2\norm{\gamma''}_{L^2}^2\norm{v'}_{L^\infty}^2\\
& \leq c_2^2\norm{v}_{H^2}^2. 
\end{align*}
The left inequality in the norm equivalence is similar. 

It then follows from the Lax-Milgram theorem that $H^2(ds)$ is a \emph{strong} Riemannian metric, meaning that at each $\gamma$ it gives an continuous linear isomorphism between $H^2(S^1,\R^n)$ and its dual. 
For contrast, the $L^2(ds)$-product does not - it is a \emph{weak} Riemannian metric. 

For the Gateaux derivative of $\E$ at $\gamma\in \I^2(S^1,\R^n)$ we find
\begin{equation}\label{dE}
	d\E_\gamma v=\dfrac{d}{d \varepsilon}\mathcal{E}(\gamma + \varepsilon v) \Bigl|_{\varepsilon=0}
 = \int^{\mathcal{L}(\gamma)}_0 [ 2 \ip{\gamma_{ss} , v_{ss}} - 3 k^2 \ip{\gamma_s, v_s} - \lambda^2 \ip{\gamma_{ss}, v} ] \, ds
\end{equation}
for all $v \in H^2(S^1; \mathbb{R}^n)$. 
Using Lemma \ref{superlemma} it is not too difficult to show that $d\E_\gamma$ is bounded in $(H^2)^*$. Moreover, it depends continuously on $\gamma$ and is therefore a Frech\'et derivative. Then since $H^2(ds)$ is a strong metric there exists an $H^2(ds)$-gradient of $\mathcal{E}$ at $\gamma$, meaning
\begin{equation}
\label{def-H2-EL}
\ip{\grad \E_\gamma, v}_{H^2(ds)} 
 = \int^{\mathcal{L}(\gamma)}_0 [ 2 \ip{\gamma_{ss} , v_{ss}} - 3 k^2 \ip{\gamma_s , v_s} - \lambda^2 \ip{\gamma_{ss} , v} ] \, ds
\end{equation}
for all $v \in H^2(S^1; \mathbb{R}^n)$. 
To derive the explicit form of $\grad \E_\gamma$, observe that \eqref{def-H2-EL}
is the weak formulation of
\begin{equation}\label{gradstrong}
	(\grad\E_\gamma)_{ssss}-(\grad\E_\gamma)_{ss}+\grad\E_\gamma = \nabla \E_\gamma 
\end{equation}
where $\nabla \E_\gamma=2\gamma_{ssss}+3(k^2\gamma_s)_s - \lambda^2 \gamma_{ss}$ is the classical $L^2(ds)$-gradient. 

To solve \eqref{gradstrong} we derive the Green's function, i.e. the solution to 
\begin{equation}\label{eq:Gde}
  G_{xxxx}(x,y)-G_{xx}(x,y)+G(x,y)=\delta(x-y)    
\end{equation}
which is $C^2$ periodic. Using the general solution to the homogeneous equation we set 
\begin{align*}
	&G(x,y)=\\
&\begin{cases}
b_1e^{\frac{\sqrt{3}}{2}x}\cos\tfrac{x}{2}
+b_2e^{\frac{-\sqrt{3}}{2}x}\cos\tfrac{x}{2}
+b_3e^{\frac{\sqrt{3}}{2}x}\sin\tfrac{x}{2}
+b_4e^{\frac{-\sqrt{3}}{2}x}\sin\tfrac{x}{2}
& \text{if} \quad x<y,\\
c_1e^{\frac{\sqrt{3}}{2}x}\cos\frac{x}{2}
+c_2e^{\frac{-\sqrt{3}}{2}x}\cos\frac{x}{2}
+c_3e^{\frac{\sqrt{3}}{2}x}\sin\frac{x}{2}
+c_4e^{\frac{-\sqrt{3}}{2}x}\sin\frac{x}{2} & \text{if} \quad x>y.
	\end{cases}
\end{align*}
In order to solve for the unknown parameters $b_i$ and $c_i$ we enforce periodic boundary conditions $\partial_1^iG(0,y)=\partial_1^iG(\length(\gamma),y)$ for $ i=0,1,2,3$, continuity at $x=y$ of $\partial_1^iG(x,y) $ for $i=0,1,2$, and the third order discontinuity 
\[ \lim_{x\to y^+}\partial_1^3 G(x,y)-\lim_{x\to y^-}\partial_1^3 G(x,y)=1.\] 
We find: 
\begin{equation}\label{eq:defG}
G( x,y;\gamma )=\frac{A (\length(\gamma)-\abs{x-y},|x-y| )}{\beta(\mathcal{L}(\gamma))}, \qquad 0\leq x,y\leq \length(\gamma),
\end{equation}
where 
\begin{align}\nonumber 
A(x_1,x_2)&= \sinh \frac{\sqrt{3} x_1}{2} \cos\frac{x_2}{2}
+\sinh\frac{\sqrt{3}x_2}{2} \cos\frac{x_1}{2} \\  \label{eq:A}
& \qquad +\sqrt{3}\cosh\frac{\sqrt{3} x_1}{2} \sin\frac{x_2}{2}
+\sqrt{3}\cosh\frac{\sqrt{3}x_2}{2} \sin\frac{x_1}{2}, \\ \nonumber 
\beta(\ell) &=2\sqrt{3}\Bigl(\cosh\frac{\sqrt{3}\ell}{2} - \cos\frac{\ell}{2} \Bigr). 
\end{align}
Now, setting
\begin{align*}
s := s_\gamma(u) = \int^u_0 |\gamma'(\xi)|\, d\xi, \quad \tilde s:= s_\gamma(\tilde u) = \int^{\tilde u}_0 |\gamma'(\xi)| \, d\xi, \quad \text{for} \quad u, \tilde u \in S^1,   
\end{align*}
the solution to \eqref{def-H2-EL} is  
\begin{align*}
\grad\E_\gamma(s) 
 &=\int_0^{\mathcal{L}(\gamma)} G(s,\tilde s;\gamma) \nabla \mathcal E(\gamma)(\tilde s) d\tilde s \\
 &=\int_0^{\mathcal{L}(\gamma)} G [ 2\gamma_{\tilde s \tilde s \tilde s \tilde s}+3(k^2 \gamma_{\tilde s})_{\tilde s} - \lambda^2 \gamma_{\tilde s \tilde s} ] \,d\tilde s \\
 &=\int_0^{\mathcal{L}(\gamma)} [ 2 G_{\tilde s \tilde s \tilde s \tilde s}\gamma - G_{\tilde s} (3k^2 - \lambda^2) \gamma_{\tilde s} ] \,d\tilde s \\
 &=2\gamma(s)+\int_0^{\mathcal{L}(\gamma)} [ 2(G_{\tilde s\tilde s}-G)\gamma -G_{\tilde s} (3k^2 - \lambda^2) \gamma_{\tilde s} ] \,d\tilde s \\
 &=2\gamma(s)-\int_0^{\mathcal{L}(\gamma)} [ 2G\gamma+G_{\tilde s}\gamma_{\tilde s}(3k^2+2-\lambda^2) ]\, d\tilde s,  
\end{align*}
where we have used \eqref{eq:Gde} and the relation $G(s, \tilde s;\gamma)=G(\tilde s, s;\gamma)$. 
Thus we obtain 
\begin{equation}
\label{eq:h2gradu}
\begin{aligned}
\grad\mathcal{E}_\gamma(u)
= 2\gamma(u)-\int_0^1 
\Bigl[ & 2G(s, \tilde s; \gamma) \gamma(\tilde u) |\gamma'(\tilde u)| \\
 & + \frac{1}{|\gamma'(\tilde u)|} \partial_{\tilde u}G(s,\tilde s; \gamma) \gamma'(\tilde u) (3k(\tilde u)^2+2 -\lambda^2) \Bigr] \, d\tilde u. 
\end{aligned} 
\end{equation}
By definition $\grad \E_\gamma\in H^2(S^1,\R^n)$ (this can also be checked directly from \eqref{eq:h2gradu}, 
as in Lemma \ref{superlemma} (iv)) and so problem \eqref{eq:GF} is an ODE in $\I^2(S^1,\R^n)$.

\subsection{Metric completeness}\label{metrics}

The $H^2(ds)$-Riemannian distance between $\gamma,\beta\in \I^2(S^1,\R^n)$ is
\[ \dist(\gamma,\beta):=\inf_{p}\int_0^1 \norm{p'(t)}_{H^2(ds)}\,  dt \]
where the infimum is taken over all piecewise $C^1$ paths $p:[0,1]\to \I^2(S^1,\R^n)$ with $p(0)=\gamma$ and $p(1)=\beta$. 
By Theorem~1.9.5 in \cite{Klingenberg:1995aa}, since $H^2(ds)$ is a strong Riemannian metric\footnote{Note that the definition of a Riemannian metric used in \cite{Klingenberg:1995aa} includes the assumption that it is strong.} 
the distance function defines a metric on $\I^2(S^1,\R^n)$ whose topology coincides with the $H^2$-topology. 

\begin{lemma}[\cite{Bruveris:2015aa}, Lemma 4.2] \label{bruv1}
Write $B^{\dist}_r(\gamma_0)$ for the open ball with radius $r$ with respect to the $H^2(ds)$-Riemannian distance. 
\begin{enumerate}
	\item[{\rm (i)}] Given $\gamma_0\in \I^2(S^1,\R^n)$ there exist $r>0$ and $C>0$ such that 
\[ 
\dist(\gamma_1,\gamma_2)\leq C\htwo{\gamma_1-\gamma_2}
\]
for all $\gamma_1, \gamma_2 \in B^{\dist}_r(\gamma_0)$. 
\item[{\rm (ii)}] Given $B^{\dist}_r(\gamma_0)\subset \I^2(S^1,\R^n)$ there exists $C>0$ such that
\[
\norm{\gamma_1-\gamma_2}_{H^2} \leq C\dist(\gamma_1,\gamma_2)
\]
for all $\gamma_1,\gamma_2\in B^{\dist}_r(\gamma_0)$.
\end{enumerate}
\end{lemma}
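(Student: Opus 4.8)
\emph{Proof plan.} This is the cited result of Bruveris; I would prove it by reducing both inequalities to the norm equivalence $c_1\htwo{v}\le \|v\|_{H^2(ds),\gamma}\le c_2\htwo{v}$ recorded above, the point being to arrange that the constants $c_1,c_2$ are \emph{uniform} on a suitable neighbourhood. The elementary observation behind this is that, by the Sobolev embedding $H^2(S^1,\R^n)\hookrightarrow C^1(S^1,\R^n)$, on any $H^2$-ball $B^{H^2}_\rho(\gamma_0)$ whose closure lies in $\I^2(S^1,\R^n)$ (such $\rho$ exists since $\I^2$ is open, Lemma \ref{superlemma}(i)) the quantities $(\min|\gamma'|)^{-1}$, $\|\gamma'\|_{L^\infty}$ and $\|\gamma''\|_{L^2}$ are bounded uniformly in $\gamma$; inspecting the derivation of the norm equivalence then shows that $c_1,c_2$ may be taken independent of $\gamma\in B^{H^2}_\rho(\gamma_0)$.

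For (i), I would fix such a $\rho$ together with uniform constants $c_1,c_2$ on $B^{H^2}_\rho(\gamma_0)$. Since the $\dist$-topology coincides with the $H^2$-topology (cited above via \cite{Klingenberg:1995aa}), the $H^2$-open set $B^{H^2}_{\rho/2}(\gamma_0)$ contains some $\dist$-ball $B^{\dist}_r(\gamma_0)$. For $\gamma_1,\gamma_2\in B^{\dist}_r(\gamma_0)$ the affine path $p(t)=(1-t)\gamma_1+t\gamma_2$ is a smooth (hence piecewise $C^1$) curve into $\I^2(S^1,\R^n)$ lying in $B^{H^2}_\rho(\gamma_0)$ by convexity of norm balls, so
\[
\dist(\gamma_1,\gamma_2)\le \int_0^1\|p'(t)\|_{H^2(ds),p(t)}\,dt=\int_0^1\|\gamma_1-\gamma_2\|_{H^2(ds),p(t)}\,dt\le c_2\,\htwo{\gamma_1-\gamma_2},
\]
which is (i) with $C=c_2$.

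For (ii), given a ball $B^{\dist}_r(\gamma_0)\subset\I^2(S^1,\R^n)$, I would use that $\dist$-bounded subsets of $\I^2(S^1,\R^n)$ are bounded in $H^2$ and uniformly immersed (i.e. stay a positive distance from $\{\min|\gamma'|=0\}$) — this is part of the content of \cite[Theorem 4.3]{Bruveris:2015aa} and is exactly what makes $(\I^2(S^1,\R^n),\dist)$ metrically complete — so that on the enlarged ball $B^{\dist}_{3r}(\gamma_0)$ the constant $c_1$ can again be taken uniform, say $c_1=c_1(r)>0$. Now fix $\gamma_1,\gamma_2\in B^{\dist}_r(\gamma_0)$; by the triangle inequality $\dist(\gamma_1,\gamma_2)<2r$, so in computing $\dist(\gamma_1,\gamma_2)$ it suffices to take the infimum over piecewise $C^1$ paths $p$ from $\gamma_1$ to $\gamma_2$ of length $<2r$, and every such path lies in $B^{\dist}_{3r}(\gamma_0)$. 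For these,
\[
\htwo{\gamma_1-\gamma_2}=\Bigl\|\int_0^1 p'(t)\,dt\Bigr\|_{H^2}\le\int_0^1\|p'(t)\|_{H^2}\,dt\le c_1^{-1}\int_0^1\|p'(t)\|_{H^2(ds),p(t)}\,dt,
\]
and passing to the infimum over such $p$ gives $\htwo{\gamma_1-\gamma_2}\le c_1^{-1}\dist(\gamma_1,\gamma_2)$, i.e. (ii) with $C=c_1^{-1}$.

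The Sobolev embedding bounds, the convexity argument and the path-length estimates are routine. The one genuinely non-elementary ingredient, and the step I expect to be the main obstacle if one insists on a self-contained proof, is the claim used in (ii) that $\dist$-bounded sets remain uniformly immersed and $H^2$-bounded: without it the constant $c_1$ could degenerate along a near-minimising path and the argument collapses. Since this is precisely the quantitative estimate underlying the metric completeness theorem of \cite{Bruveris:2015aa}, I would simply invoke it rather than reprove it.
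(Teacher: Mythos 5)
The paper does not actually prove this lemma: it is quoted verbatim from Bruveris \cite{Bruveris:2015aa} (Lemma 4.2), so there is no in-paper argument to compare against, and your proposal should be judged as a reconstruction of the cited proof. Your part (i) is correct and essentially self-contained: uniform constants $c_1,c_2$ in the norm equivalence on an $H^2$-ball around $\gamma_0$ follow exactly as in Lemma \ref{superlemma}(i), the coincidence of the $\dist$- and $H^2$-topologies lets you fit a $\dist$-ball inside that $H^2$-ball, and the affine-path estimate then gives $\dist(\gamma_1,\gamma_2)\le c_2\htwo{\gamma_1-\gamma_2}$. Your part (ii) is also logically sound as written (restricting to near-minimising paths of length $<2r$, which stay in $B^{\dist}_{3r}(\gamma_0)$, and pulling $c_1^{-1}$ out of the length integral), but be precise about what you are invoking: metric completeness as stated in Theorem \ref{bruv2} does \emph{not} by itself give that $\dist$-balls are $H^2$-bounded and uniformly immersed. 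That uniformity is a separate quantitative estimate in Bruveris's paper (Lipschitz-type bounds on $\|\gamma\|_{H^2}$ and $\log|\gamma'|$ along finite-length paths), and it is the same machinery that proves Lemma 4.2 there; so as a self-contained proof your (ii) defers the genuinely hard step back to the very source being cited, while as a justification-by-citation it is no weaker than the paper's own treatment, which cites the whole lemma. You identified this pressure point yourself, which is the right diagnosis; a fully independent proof would have to establish the along-path bounds on $|\gamma'|$ and $\|\gamma\|_{H^2}$ (e.g. by a Gronwall-type argument from $\partial_t\log|\gamma_u|=\ip{\partial_t\gamma_s,\gamma_s}$ and the fact that the $H^2(ds)$-norm controls $\|\partial_t\gamma_s\|_{L^\infty}$) before running your infimum argument.
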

\begin{theorem}[\cite{Bruveris:2015aa}, Theorem 4.3] \label{bruv2}
$(\I^2(S^1,\R^n), {\rm dist})$ is a complete metric space. 
\end{theorem}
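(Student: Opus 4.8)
The plan is to show directly that every $\dist$-Cauchy sequence $(\gamma_n)$ in $\I^2(S^1,\R^n)$ converges, with respect to $\dist$, to a limit lying in $\I^2(S^1,\R^n)$ (that $\dist$ is a genuine metric inducing the $H^2$-topology has already been recorded above). The first step I would carry out is a family of a priori estimates along paths that keep a $\dist$-Cauchy sequence from degenerating to a non-immersion (its length collapsing, or $\abs{\gamma'}$ blowing up or vanishing). For a piecewise $C^1$ path $p:[0,1]\to\I^2(S^1,\R^n)$ with velocity $v=\partial_t p$, differentiating $\abs{\partial_u p}$ yields the pointwise identity $\partial_t\log\abs{\partial_u p}=\ip{v_s,p_s}$ and, integrating, $\partial_t\length(p)=\int_{p(t)}\ip{v_s,p_s}\,ds$. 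Because $v$ and $p$ are periodic, $v_s$ and $p_s$ have vanishing integral along $p(t)$; Cauchy--Schwarz with $\norm{p_s}_{L^2(ds)}=\sqrt{\length(p)}$ gives $\abs{\partial_t\length(p)}\le\sqrt{\length(p)}\,\norm{v_s}_{L^2(ds)}$, hence $\abs{\partial_t\sqrt{\length(p)}}\le\tfrac12\norm{v}_{H^2(ds)}$, while bringing in the Poincar\'e inequality $\norm{v_s}_{L^2(ds)}\le\tfrac{\length(p)}{2\pi}\norm{v_{ss}}_{L^2(ds)}$ upgrades this to $\abs{\partial_t\length(p)}\le\tfrac1{2\pi}\length(p)^{3/2}\norm{v_{ss}}_{L^2(ds)}$, hence $\abs{\partial_t\length(p)^{-1/2}}\le\tfrac1{4\pi}\norm{v}_{H^2(ds)}$. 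Integrating over near-minimising paths, both $\sqrt{\length(\cdot)}$ and $\length(\cdot)^{-1/2}$ are therefore globally Lipschitz for $\dist$. Finally I would note that the one-dimensional Sobolev estimate $\czero{v_s}\le C\bigl(\length(p)^{-1/2}+\length(p)^{1/2}\bigr)\norm{v}_{H^2(ds)}$, fed into $\partial_t\log\abs{\partial_u p}=\ip{v_s,p_s}$, shows that along a path whose length-function stays in a fixed compact subinterval of $(0,\infty)$ the quantity $\czero{\log\abs{\partial_u p(1)}-\log\abs{\partial_u p(0)}}$ is bounded by a constant times the $\dist$-length of $p$.

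Next I would run the completeness argument on a $\dist$-Cauchy sequence $(\gamma_n)$. The Lipschitz bounds force $\length(\gamma_n)\to L_\infty$ for some $L_\infty\in(0,\infty)$ — the bound for $\length^{-1/2}$ is what rules out $L_\infty=0$. Fix $\varepsilon>0$ small relative to $L_\infty$ and choose $N$ so large that $\dist(\gamma_n,\gamma_m)<\varepsilon/2$ and $\abs{\length(\gamma_n)-L_\infty}<L_\infty/4$ for all $n,m\ge N$; then a near-minimising path joining any two of the tail elements has length-function trapped in a fixed compact subinterval of $(0,\infty)$, so the logarithmic estimate gives $\czero{\log\abs{\gamma_n'}-\log\abs{\gamma_N'}}\le C\varepsilon$ for all $n\ge N$, with the same bound holding along the paths. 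Since $\gamma_N$ is a fixed immersion, $\abs{\gamma_n'}$ is then bounded above and below by positive constants independent of $n\ge N$, and likewise along the paths, so $B^{\dist}_\varepsilon(\gamma_N)\subset\I^2(S^1,\R^n)$.

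I would then apply Lemma~\ref{bruv1}(ii) to the ball $B^{\dist}_\varepsilon(\gamma_N)$, obtaining $C>0$ with $\htwo{\gamma_n-\gamma_m}\le C\,\dist(\gamma_n,\gamma_m)\to0$; thus $(\gamma_n)$ is Cauchy in the Hilbert space $H^2(S^1,\R^n)$ and converges there to some $\gamma_\infty$. Since $H^2(S^1,\R^n)\hookrightarrow C^1(S^1,\R^n)$ we get $\gamma_n'\to\gamma_\infty'$ uniformly, so $\gamma_\infty$ inherits the positive lower bound on $\abs{\gamma_n'}$, whence $\gamma_\infty\in\I^2(S^1,\R^n)$. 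To upgrade $H^2$-convergence to $\dist$-convergence I would apply Lemma~\ref{bruv1}(i) at $\gamma_\infty$: it yields $r,C>0$ with $\dist(\beta_1,\beta_2)\le C\htwo{\beta_1-\beta_2}$ for $\beta_1,\beta_2\in B^{\dist}_r(\gamma_\infty)$, and since the $\dist$- and $H^2$-topologies coincide, $B^{\dist}_r(\gamma_\infty)$ is an $H^2$-neighbourhood of $\gamma_\infty$, so $\gamma_n\in B^{\dist}_r(\gamma_\infty)$ for $n$ large and $\dist(\gamma_n,\gamma_\infty)\le C\htwo{\gamma_n-\gamma_\infty}\to0$. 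This gives completeness.

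The main obstacle is the first step. An arbitrary open subset of a Hilbert space carrying a strong Riemannian metric need not be metrically complete, so one must exploit the specific geometry of $\I^2(S^1,\R^n)$ to prevent a $\dist$-Cauchy sequence from escaping to the boundary $\{\min\abs{\gamma'}=0\}$ or having its length degenerate. The two Lipschitz facts for $\sqrt{\length}$ and $\length^{-1/2}$ — the second crucially using the Poincar\'e inequality together with the zero-mean property of $v_s$ and $p_s$ forced by periodicity — trap the length in a compact subinterval of $(0,\infty)$ along a Cauchy tail, and then the logarithmic-derivative identity traps $\abs{\gamma'}$ between positive constants. Once the tail is confined to a $\dist$-ball lying inside $\I^2(S^1,\R^n)$, the remainder is soft, requiring only completeness of $H^2(S^1,\R^n)$, the one-dimensional Sobolev embedding, and Lemma~\ref{bruv1}.
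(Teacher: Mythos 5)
This theorem is not proved in the paper itself — it is quoted from \cite{Bruveris:2015aa} — so the only internal comparison is with the cited source, and your argument is in essence the one given there. Your proof is correct: the $\dist$-Lipschitz bounds for $\sqrt{\length(\cdot)}$ and $\length(\cdot)^{-1/2}$ (the latter via Poincar\'e--Wirtinger and the zero-mean property of $v_s$), together with the logarithmic control of $\abs{\gamma'}$ along near-minimising paths, keep a Cauchy tail uniformly immersed with length trapped in a compact subinterval of $(0,\infty)$, after which Lemma~\ref{bruv1}(ii) transfers Cauchyness to $H^2$, the uniform lower bound on $\abs{\gamma_n'}$ ensures the $H^2$-limit is an immersion, and Lemma~\ref{bruv1}(i) together with the coincidence of the $\dist$- and $H^2$-topologies upgrades the convergence back to $\dist$.
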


For contrast once again, the distance obtained from the $L^2(ds)$-metric does not give a complete metric space and in fact vanishes on any path component. 
See~\cite{Michor:2006aa} and the discussion in~\cite{Schrader:2021aa}.

\subsection{Symmetries}
Given a diffeomorphism $\phi\in \text{Diff}(S^1)$ we have the action by reparametrisation $\Phi:\I^2(S^1,\R^n)\to \I^2(S^1,\R^n)$, $\Phi\gamma:=\gamma\circ \phi$. 
Since $\Phi$ is linear we have $d\Phi=\Phi$. 
Along the same lines as in~\cite{Schrader:2021aa} Section~3, this action is an isometry of the $H^2(ds)$-metric. 
That is, 
\[
\ip{\Phi v,\Phi w}_{H^2(ds),\Phi \gamma}=\ip{v,w}_{H^2(ds),\gamma}.  
\]
Moreover, the energy $\E$ is invariant under reparametrisation: $\E(\gamma)=\E(\Phi\gamma)$ and therefore $d\E_\gamma=d\E_{\Phi\gamma}\Phi$. 
Applying the definition of the gradient and the isometry property, we have 
\[
d\E_\gamma v= \ip{\grad\E_\gamma, v}_{H^2(ds),\gamma}= \ip{\Phi \grad\E_\gamma,\Phi v}_{H^2(ds),\Phi\gamma}. 
\]
On the other side $d\E_{\Phi\gamma}\Phi v=\ip{\grad\E_{\Phi\gamma},\Phi v}_{H^2(ds),\Phi\gamma}$ and equating the two we observe that $\Phi \grad \E_\gamma=\grad\E_{\Phi\gamma}$. 
Using the isometry property again we have 
\begin{equation}\label{gradinvariance}
\norm{\grad \E_{\Phi\gamma}}_{H^2(ds)}=\norm{\grad \E_\gamma}_{H^2(ds)}. 
\end{equation} 
By a similar argument, the above also holds when $\Phi$ is the map induced by a fixed translation in $\R^n$.


\section{Existence and uniqueness} \label{section:existence}

From now on, we set 
\begin{equation}
\label{eq:200513}
F(\gamma):= -\grad \mathcal E(\gamma) \quad \text{for} \quad \gamma \in \I^2(S^1,\R^n). 
\end{equation}
Moreover, we denote by $C_S$ the Sobolev constant of the imbedding $H^1(S^1) \subset C^{\frac{1}{2}}(S^1)$: 
\[
\| f \|_{C^{\frac{1}{2}}(S^1)} \le  C_S \| f \|_{H^1(S^1)}. 
\]

\begin{lemma}\label{superlemma}
Let $\gamma_0 \in \I^2(S^1,\R^n)$ and
$
b=\tfrac{1}{2}\min_{u\in S^1}\abs{\gamma'_0(u)}.
$
Then there exist positive constants $c_1,c_2,c_3 $ depending on $\gamma_0$ such that for all $\gamma$ in the open $H^2$-ball $U=B^{H^2}_{b/C_S}(\gamma_0):=\{\gamma\in H^2(S^1,\R^n):\norm{\gamma-\gamma_0}_{H^2}<b/C_s\}$:
\begin{enumerate}
\item[{\rm (i)}] for all $u\in S^1$: $0<c_1<\abs{\gamma'(u)}<c_2$, and therefore $c_1 < \length(\gamma) < c_2$,
\item[{\rm (ii)}] $T:U\to H^1(S^1,\R^n), \gamma \mapsto \gamma_s$ is locally Lipschitz,
\item[{\rm (iii)}] $\kappa:U\to L^2(S^1,\R^n), \gamma \mapsto \gamma_{ss}$ is  locally Lipschitz, and $\norm{k}_{L^2}<c_3$,
\item[{\rm (iv)}] $\norm{F(\gamma)}_{H^2}\leq c_3$,
\item[{\rm (v)}] $\norm{DF_\gamma}_{\op}\leq c_3$. 
\end{enumerate}
where $\norm{\cdot}_{\op}$ is the operator norm.
\end{lemma}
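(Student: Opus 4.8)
The plan is to establish (i) first as a direct consequence of the Sobolev embedding $H^1(S^1)\subset C^{1/2}(S^1)$, and then bootstrap the remaining estimates from it. For (i), write $\gamma'(u)-\gamma_0'(u)$ and apply the embedding constant $C_S$: for $\gamma\in U$ we have $\abs{\gamma'(u)-\gamma_0'(u)}\le \norm{\gamma'-\gamma_0'}_{C^{1/2}}\le C_S\norm{\gamma-\gamma_0}_{H^2}<b$, so by the reverse and forward triangle inequalities $\abs{\gamma'(u)}$ lies between $b=\tfrac12\min\abs{\gamma_0'}$ and $\tfrac32\max\abs{\gamma_0'}+b=:c_2$; take $c_1=b$. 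Integrating over $S^1$ gives the bounds on $\length(\gamma)=\int_0^1\abs{\gamma'}\,du$. In particular $c_0:=(\min\abs{\gamma'})^{-1}$ is bounded on $U$ by $1/c_1$.

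For (ii) and (iii) I would use the explicit formulas $\gamma_s=\gamma'/\abs{\gamma'}$ and $\gamma_{ss}=\abs{\gamma'}^{-2}\gamma''-\ip{\gamma'',\gamma'}\abs{\gamma'}^{-4}\gamma'$ (already recorded in the excerpt). The map $\gamma\mapsto\abs{\gamma'}$ from $U$ to $C^{1/2}(S^1)\subset H^1(S^1)$ is smooth (it is the composition of the bounded linear map $\gamma\mapsto\gamma'$ with $w\mapsto\abs{w}$, which is smooth away from $0$ and stays away from $0$ on $U$ by (i)), hence locally Lipschitz; likewise $\gamma\mapsto 1/\abs{\gamma'}$ is locally Lipschitz into $H^1$ since it avoids zero. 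Then $T(\gamma)=\gamma'\cdot(1/\abs{\gamma'})$ is locally Lipschitz into $H^1$ because $H^1(S^1)$ is a Banach algebra (multiplication is bounded bilinear), giving (ii); and $\kappa(\gamma)$ is a polynomial expression in $\gamma''\in L^2$, $\gamma'\in H^1$, and $1/\abs{\gamma'}\in H^1$, with all products of an $L^2$ factor by $H^1$ factors landing in $L^2$ and depending locally Lipschitz-continuously on the data, giving (iii); the bound $\norm{k}_{L^2}=\norm{\gamma_{ss}}_{L^2}<c_3$ follows by taking $c_3$ large enough using (i) and $\norm{\gamma}_{H^2}\le\norm{\gamma_0}_{H^2}+b/C_S$ on $U$.

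For (iv) I would estimate directly from the closed-form expression \eqref{eq:h2gradu} for $\grad\E_\gamma$, or more conveniently from the weak characterization \eqref{def-H2-EL} together with the equivalence of the $H^2(ds)$- and $H^2$-norms: since $\norm{v}_{H^2(ds)}\le c_2\norm{v}_{H^2}$ with $c_2=c_2(\gamma)$ controlled on $U$, and
$\abs{d\E_\gamma v}\le \big(2\norm{\gamma_{ss}}_{L^2}+3\norm{k}_{L^2}^2c_0^{1/2}\dots\big)\norm{v}_{H^2}$ — schematically, each term in \eqref{dE} is bounded using (i) and (iii), plus $\norm{k}_{L^\infty}\le C_S\norm{k}_{H^1}$ controlled via $\gamma\in H^2$ — we get $\abs{d\E_\gamma v}\le c_3\norm{v}_{H^2(ds)}$, and then $\norm{\grad\E_\gamma}_{H^2(ds)}=\sup_{v\ne0}\abs{d\E_\gamma v}/\norm{v}_{H^2(ds)}\le c_3$, hence $\norm{F(\gamma)}_{H^2}\le c_3$ after adjusting the constant through the norm equivalence. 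For (v), the cleanest route is to observe that all the building blocks $\gamma\mapsto \abs{\gamma'}$, $\gamma\mapsto 1/\abs{\gamma'}$, $\gamma\mapsto k^2$, and the Green's function $G(\cdot,\cdot;\gamma)$ (which depends on $\gamma$ only through $\length(\gamma)$, smoothly, with denominator $\beta(\length(\gamma))$ bounded away from zero by (i)) are smooth maps of $\gamma$ with derivatives bounded on $U$; since $F$ is built from these by the bounded multilinear operations of multiplication in the Banach algebra $H^1$ and integration against a smooth kernel, $DF_\gamma$ is bounded on $U$ uniformly, i.e. $\norm{DF_\gamma}_{\op}\le c_3$. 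I expect (v) to be the main obstacle: it requires differentiating the Green's-function representation \eqref{eq:h2gradu} in $\gamma$ — in particular handling $\partial_\gamma G$ through the chain rule via $\length(\gamma)$, and controlling the $H^2$-norm of the resulting integral operator applied to a variation — whereas (i)–(iv) are comparatively routine consequences of the Sobolev embedding and the Banach-algebra property of $H^1(S^1)$.
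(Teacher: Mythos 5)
Parts (i)--(iii) of your plan coincide with the paper's proof: (i) is exactly the Sobolev-embedding argument, and your Banach-algebra/smoothness packaging of (ii)--(iii) is a cosmetic variant of the paper's direct pointwise Lipschitz estimates for $\gamma_s$ and $\gamma_{ss}$. For (iv) you take a genuinely different route: the paper estimates $F(\gamma)$, $F(\gamma)'$, $F(\gamma)''$ from the Green's function representation \eqref{eq:h2gradu} and the explicit kernel derivatives, whereas you bound $\abs{d\E_\gamma v}\le C\norm{v}_{H^2(ds)}$ directly from \eqref{dE} and use the Riesz characterization \eqref{def-H2-EL} to get $\norm{\grad\E_\gamma}_{H^2(ds)}\le C$, then pass to the $H^2$-norm. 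This is legitimate and arguably cleaner, provided you note that both norm-equivalence constants between $\norm{\cdot}_{H^2}$ and $\norm{\cdot}_{H^2(ds)}$ are uniform on $U$ (they involve $\min\abs{\gamma'}$, $\max\abs{\gamma'}$ and $\norm{\gamma''}_{L^2}$, all controlled by (i) and the $H^2$-bound on $U$); what it buys is that (iv) no longer needs the Green's function at all.

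The gap is in (v). Your argument rests on the claim that $G(\cdot,\cdot;\gamma)$ depends on $\gamma$ \emph{only through} $\length(\gamma)$. It does not: by \eqref{eq:defG} the kernel evaluated at $(u,\tilde u)$ is $A\bigl(\length(\gamma)-\abs{s-\tilde s},\abs{s-\tilde s}\bigr)/\beta(\length(\gamma))$ with $s=s_\gamma(u)$, $\tilde s=s_\gamma(\tilde u)$, so it also depends on $\gamma$ through the arc-length function, and the integral in \eqref{eq:h2gradu} carries the additional $\gamma$-dependent factors $\abs{\gamma'(\tilde u)}$ and $k(\tilde u)^2$. Differentiating in $\gamma$ therefore produces, besides the $\length$-term, the terms $Ds_\gamma v$ and $D(\abs{s-\tilde s})_\gamma v$ (with $\sgn(s-\tilde s)$ factors), the commutation terms coming from $\partial_\varepsilon ds=\ip{v_s,\gamma_s}\,ds$ and $\phi_{s\varepsilon}=\phi_{\varepsilon s}-\ip{v_s,\gamma_s}\phi_s$, and the variation of $k^2$, which contributes $6\ip{v_{\tilde s\tilde s},\gamma_{\tilde s\tilde s}}$ --- a quantity only integrable in $\tilde u$, which must be paired against the $L^\infty$-bounded kernel $G_{\tilde s}\gamma_{\tilde s}$. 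This is precisely the content of the paper's formula \eqref{DF} and the subsequent estimates of $DG_\gamma v$ and $(DG_\gamma v)_s$, which constitute the bulk of the proof of (v). A chain rule "via $\length(\gamma)$" alone misses these terms, and a generic "bounded multilinear building blocks" argument does not see the $\sgn$-discontinuities or the low regularity of the $k^2$-variation; so (v) as sketched is not yet a proof --- the missing step is the explicit computation of $DF_\gamma v$ and the uniform bounds on $Ds_\gamma v$, $D(\abs{s-\tilde s})_\gamma v$, $DG_\gamma v$ and $(DG_\gamma v)_s$ carried out in the paper.
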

\begin{proof}
By the Sobolev imbedding we have 
\[ 
\bigl| |\gamma'(u)| - |\gamma_0'(u)| \bigr| \leq \| \gamma'-\gamma_0' \|_{C^0} \leq C_S \| \gamma-\gamma_0 \|_{H^2} <b 
\]
for all $u\in S^1$, and then
\[ 
0 < |\gamma_0'(u)| - b < |\gamma'(u)| < b + \|\gamma_0'\|_{C^0} 
\]
which proves (i).

Suppose $\gamma,\mu$ are in the open $H^2$-ball centered at $\gamma_0$ with radius $b/C_S$, then by (i)
\begin{align}\label{eq:Tlip}
|T(\gamma)-T(\mu)| = \abs{\frac{\gamma'}{\abs{\gamma'}}-\frac{\mu'}{\abs{\mu'}}}
&=\frac{1}{|\gamma'| |\mu'|} \abs{ \gamma' |\mu'| - \gamma' |\gamma'| + \gamma' |\gamma'| - \mu' |\gamma'| } \\
&\leq \frac{2}{c_1} |\gamma'-\mu'|
\end{align}
and therefore $\|T(\gamma)-T(\mu)\|_{L^2} \leq \frac{2}{c_1} \|\gamma'-\mu'\|_{L^2}$. 
Showing that 
\[
\|T(\gamma)'-T(\mu)'\|_{L^2}\leq c \|\gamma'-\mu'\|_{H^1}
\]
uses basically the same method but with more terms. 
Thus (ii) follows. 

From $\gamma_{ss}=\gamma'' |\gamma'|^{-2} -\ip{\gamma'',\gamma'} \gamma' |\gamma'|^{-4}$ we get
\[ 
|\kappa|^2=\frac{|\gamma''|^2}{|\gamma'|^4}-\frac{\ip{\gamma'',\gamma'}^2}{|\gamma'|^6}\leq 2 \frac{|\gamma''|^2}{|\gamma'|^4}
\leq \frac{2}{c_1^4} |\gamma''|^2
\]
and then since $|\kappa|=k$:
\[
\int k^2 du=\int |\kappa|^2 du \leq \frac{2}{c_1^4} \|\gamma''\|^2_{L^2} \leq \frac{2}{c_1^4} \Bigl( \frac{b}{C_S} + \|\gamma_0\|_{H^2} \Bigr)^2. 
\]
The proof that 
\[
\| \kappa(\gamma) - \kappa(\mu)\|_{L^2} \leq c \|\gamma-\mu\|_{H^2} 
\]
is similar to (ii). 
Thus (iii) follows. 

From now on the arguments of $G$ and its derivatives, when ommitted, should be taken to be $s,\tilde{s};\gamma$.
By \eqref{eq:h2gradu} and \eqref{eq:200513} we have 
\begin{align} 
\label{eq:F}
F(\gamma)(u)&=-2\gamma(u)+ \int_0^1 
\Bigl[ 2G\gamma(\tilde u) |\gamma'(\tilde u)|\\
& \qquad \qquad \qquad 
+ \frac{1}{|\gamma'(\tilde u)|} \partial_{\tilde u } G \gamma'(\tilde u)\left (3 k(\tilde u)^2+2 - \lambda^2\right )\Bigr]\,d\tilde u, \notag \\
F(\gamma)'(u)&= -2\gamma'(u)+ \int_0^1 \Bigl[ 2 \partial_u G \gamma(\tilde u) |\gamma'(\tilde u)|\\
& \qquad \qquad \qquad 
+ \frac{1}{|\gamma'(\tilde u)|} \partial_u \partial_{\tilde{u}} G \gamma'(\tilde u)\left (3 k(\tilde u)^2+2 - \lambda^2\right )\Bigr]\, d\tilde u,  \label{eq:Fdash} \\
F(\gamma)''(u)&= -2\gamma''(u)+ \int_0^1 \Bigl[ 2 \partial^2_u G \gamma(\tilde u) |\gamma'(\tilde u)|\\
& \qquad \qquad \qquad 
+   \frac{1}{|\gamma'(\tilde u)|} \partial^2_u \partial_{\tilde{u}} G \gamma'(\tilde u)\left (3 k(\tilde u)^2+2-\lambda^2\right )\Bigr]\, d\tilde u.  \label{eq:Fdashdash} 
\end{align}
Then, suppressing arguments in $A(\length(\gamma)-\abs{s-\s},\abs{s-\s})$ and $\beta(\length(\gamma))$, the derivatives of $G$ are 
\begin{align}
\partial_{u} G &=\frac{(\partial_2-\partial_1)A}{\beta}  \sgn (s-\tilde s) |\gamma'(u)|, \label{eq:partialG} \\
\partial_{\tilde u} G &=-\partial_uG, \label{Gu-Gtu} \\
\partial_u \partial_{\tilde u} G &=-\frac{(\partial_2-\partial_1)^2A}{\beta} |\gamma'(\tilde u)| |\gamma'(u)|, \label{Gtuu}\\
\partial^2_u G &=\frac{(\partial_2-\partial_1)^2A}{\beta}  |\gamma'(u)|^2 
            + \partial_{u} G \frac{\ip{\gamma'(u), \gamma''(u)}}{|\gamma'(u)|^2}\label{eq:Guu}, \\
\partial^2_u \partial_{\tilde u} G &= -\frac{(\partial_2-\partial_1)^3 A}{\beta}  \sgn(s-\tilde s) |\gamma'(\tilde u)| |\gamma'(u)|^2 \label{eq:Gu3} \\
& \qquad 
- \frac{(\partial_2-\partial_1)^2A}{\beta}  |\gamma'(\tilde u)| \frac{\ip{\gamma'(u) , \gamma''(u)}}{ |\gamma'(u)|}, \notag
\end{align}
where in the expressions for $\partial_u \partial_{\tilde u}G$ and $\partial^2_{u}G$ we have used the fact, computed from \eqref{eq:A}, that $(\partial_2-\partial_1)A(\length(\gamma),0)$ vanishes. 
Now $A$ and $\beta$ are both smooth functions, and each of $s,\tilde s, \length(\gamma)$ is $L^\infty$-bounded by (i). 
Moreover, $\beta(\length(\gamma))$ is bounded away from zero by (i), and so the quotients in the above expressions are all bounded. 
It follows that $\partial_u G, \partial_{\tilde u} G,$ and $ \partial_u \partial_{\tilde u}G$ are bounded and then from \eqref{eq:F},\eqref{eq:Fdash}, \eqref{eq:partialG}, \eqref{Gu-Gtu} and \eqref{Gtuu}, using (iii), 
we confirm that $\norm{F(\gamma)}_{H^1}$ is bounded. 
From \eqref{eq:Guu},\eqref{eq:Gu3} there exist positive constants $C_1,C_2$ such that
\begin{align*}
|\partial_u^2G(s, \tilde s;\gamma)| & \le C_1(|\gamma'(u)|^2 + |\gamma''(u)|),\\
|\partial_u^2\partial_{\tilde{u}}G(s, \tilde s; \gamma)| & \le C_2 |\gamma'(\tilde u)| ( |\gamma'(u)|^2 + |\gamma''(u)|).
\end{align*}
Using these inequalities with \eqref{eq:Fdashdash}, we have 
\begin{align*}
|F(\gamma)''(u)| &\leq 2|\gamma''(u)|\\
&\quad +\int_0^1\begin{multlined}[t] \Bigl[  2C_1 \bigl(|\gamma'(u)|^2 + |\gamma''(u)| \bigr)|\gamma(\tilde u)| |\gamma'(\tilde{u})|\\
+C_2 |\gamma'(\tilde u)| \bigl(|\gamma'(u)|^2+|\gamma''(u)| \bigr) \bigl( 3k(\tilde u)^2 + 2 + \lambda^2 \bigr) \Bigr] \, d\tilde{u}. 
\end{multlined}
\end{align*}
Using (iii) again we have that $\norm{F(\gamma)''}_{L^2}$ is bounded. 
Thus (iv) follows. 

Let $\alpha:(a,b)\to \I^2(S^1,\R^n)$ be a variation through $\gamma$ in the direction $v\in H^2(S^1,\R^n)$, i.e $\alpha(0)=\gamma$ and $\alpha'(0)=v$. 
Treating $\alpha$ as a function of two variables $\alpha(\varepsilon,u)$, we calculate 
\[ 
\partial_\varepsilon |\alpha_u| \Bigm|_{\varepsilon=0}=\ip{\alpha_{u \varepsilon},\alpha_u} |\alpha_u|^{-1}\Bigm|_{\varepsilon=0}
=\ip{\alpha_{\varepsilon u},\alpha_s} \Bigm|_{\varepsilon=0} = \ip{ v_u,\gamma_s }.  
\]
Hence also at $\varepsilon=0$ we have $\partial_\varepsilon ds=\langle v_s,\gamma_s \rangle ds$. 
Similarly for any function $\phi$ depending on $\alpha$ we find the following  relation for commutation with the arc length derivative
\[ 
\phi_{s\varepsilon}\Bigm|_{\varepsilon=0}=\phi_{\varepsilon s}-\ip{\alpha_{\varepsilon s},\alpha_s}\phi_s \Bigm|_{\varepsilon=0} 
 = \phi_{\varepsilon s}-\langle v_s,\gamma_s\rangle \phi_s. 
\]
Using these formulae we calculate
\begin{equation}
\label{DF}
\begin{alignedat}{2}
DF_\gamma v &=\partial_\varepsilon F(\alpha)\bigm|_{\varepsilon=0} \\
& =-2v + \int_0^{\length(\gamma)} \Bigl[ 2DG_\gamma v+2Gv +2G\gamma \ip{v_{\tilde s},\gamma_{\tilde s}} \\
& \qquad \qquad \qquad \quad \,\, + \bigl( (DG_\gamma v)_{\tilde s}\gamma_{\tilde s}+G_{\s}v_{\tilde s} \bigr) (3k^2+2-\lambda^2) \\
& \qquad \qquad \qquad \quad \,\, + G_\s \gamma_\s \bigl( 6\ip{v_{\s \s },\gamma_{\s \s }} - (15k^2 + 2 - \lambda^2) \ip{v_\s ,\gamma_\s } \bigr) \Bigr] \, d\s.  
\end{alignedat}
\end{equation}
To see that $\norm{DF_\gamma}_{\op}$ is bounded, note  from \eqref{eq:defG} that $G(x,y)$ and $\partial_2G(x,y)$ are both continuous functions,  and so by (i) we have $L^\infty$-bounds for $G(s,\tilde s)$ and $G_{\s}(s,\tilde s)$. 
Since (iii) gives an $L^2$-bound for $k$, it remains to show that $DG_\gamma v$ and $(DG_\gamma v)_s$ are bounded. 
From $s_\gamma=\int_0^u |\gamma'(\tau)|\,d\tau$ and $\length(\gamma)=s_\gamma(1)$ we calculate 
\begin{align*}
Ds_\gamma v&=\int_0^u \ip{v',\gamma'/|\gamma '|}\, d\tau, \\
D\length_\gamma v &=\int_0^1 \ip{v', \gamma'/|\gamma '|}\, d\tau, \\
D(\abs{s-\tilde s})_\gamma v &=\sgn(s-\s)\int_{\tilde u}^u\ip{v',\gamma'/|\gamma'|}\, d\tau, 
\end{align*}
and, assuming $\norm{v}_{H^2}=1$, each of these is $L^\infty$-bounded. Then, suppressing arguments in $A$ and $\beta$ again we have
\[
DG_\gamma v = \frac{1}{\beta}\partial_1 A\left (D\length_\gamma v -D(\abs{s-\s})_\gamma v\right )+ \frac{1}{\beta}\partial_2 A D(\abs{s-\tilde s})_\gamma v-\frac{\beta'}{\beta^2}A D\length_\gamma v.
\]
Since $A$ and $\beta$ are smooth functions, and $\length(\gamma)$ is bounded away from zero by (i), we now have that $|DG_\gamma|$ is bounded. Next we calculate 
\begin{align*}
(DG_\gamma v)_s &= \begin{multlined}[t]\frac{1}{\beta} (\partial_2\partial_1 A-\partial_1^2 A )\sgn(s-\s)(D\length_\gamma v -D(\abs{s-\s})_\gamma v )) \\
+ \frac{1}{\beta}(\partial_2 A - \partial_1 A)(D(\abs{s-\s})_\gamma v)_s \\
  +\frac{1}{\beta} (\partial_2^2A-\partial_1\partial_2 A)\sgn(s-\s)D(\abs{s-\s})_\gamma v \\
-\frac{\beta'}{\beta^2}(\partial_2 A-\partial_1 A)\sgn(s-\s) D\length_\gamma v, 
\end{multlined}\\
(D(\abs{s-\tilde s})_\gamma v)_s &= \sgn(s-\s)v_s\gamma_s.
\end{align*}
This implies that $(DG_\gamma v)_s $ is also $L^\infty$-bounded when $\norm{v}_{H^2}=1$, and then we observe from \eqref{DF} that $\|DF_\gamma \|_{\op}$ is bounded. 
Thus (v) follows, and the proof of Lemma \ref{superlemma} is complete. 
\end{proof}

\begin{proposition}\label{shortexistence}
Let $\gamma_0 \in \I^2(S^1,\R^n)$. 
Then there exists $T>0$ such that problem \eqref{eq:GF} possesses a unique solution in $C^1([0, T), \I^2(S^1, \mathbb{R}^n))$. 
\end{proposition}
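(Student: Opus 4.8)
The plan is to view \eqref{eq:GF} as the autonomous ODE $\dot\gamma=F(\gamma)$ in the Banach space $H^2(S^1,\R^n)$ and to apply the Picard--Lindel\"of theorem for Banach spaces on a small $H^2$-ball about $\gamma_0$. First I would fix $b=\tfrac12\min_{u\in S^1}|\gamma_0'(u)|$ and take $U=B^{H^2}_{b/C_S}(\gamma_0)$, the ball of Lemma~\ref{superlemma}. By part (i) of that lemma each $\gamma\in U$ satisfies $|\gamma'(u)|\geq c_1>0$, so $U\subset\I^2(S^1,\R^n)$ and $F$ is defined on all of $U$; part (iv) gives the uniform bound $\norm{F(\gamma)}_{H^2}\leq c_3$ on $U$; and part (v) gives Gateaux differentiability with $\norm{DF_\gamma}_{\op}\leq c_3$ on $U$. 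Since $U$ is convex, composing with an arbitrary element of $(H^2)^*$ and using the scalar mean value theorem turns the last bound into the Lipschitz estimate
\[
\norm{F(\gamma_1)-F(\gamma_2)}_{H^2}\leq c_3\,\norm{\gamma_1-\gamma_2}_{H^2}\qquad\text{for all }\gamma_1,\gamma_2\in U,
\]
so $F$ is Lipschitz, in particular continuous, on $U$.

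Next I would pick $\rho>0$ with $\overline{B^{H^2}_\rho(\gamma_0)}\subset U$, set $T=\min\{\rho/c_3,\,1/(2c_3)\}$, and run the contraction-mapping argument on $X=C\bigl([0,T],\overline{B^{H^2}_\rho(\gamma_0)}\bigr)$ equipped with the supremum-in-time $H^2$-norm: the map $(\Psi\gamma)(t)=\gamma_0+\int_0^tF(\gamma(\tau))\,d\tau$ sends $X$ into itself by the bound $\norm{F}_{H^2}\leq c_3$ and is a contraction with constant $c_3T\leq\tfrac12$ by the Lipschitz estimate, so it has a unique fixed point $\gamma$, which by the fundamental theorem of calculus lies in $C^1\bigl([0,T],\overline{B^{H^2}_\rho(\gamma_0)}\bigr)\subset C^1\bigl([0,T),\I^2(S^1,\R^n)\bigr)$ and solves \eqref{eq:GF}. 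To upgrade uniqueness from $U$-valued to arbitrary $\I^2$-valued solutions I would argue by connectedness: if $\gamma_1,\gamma_2\in C^1([0,T),\I^2(S^1,\R^n))$ both solve \eqref{eq:GF}, then $\{t:\gamma_1(t)=\gamma_2(t)\}$ is closed, contains $0$, and is open since at each of its points $t_0$ one may invoke the local uniqueness just proved with base curve $\gamma_1(t_0)$ in place of $\gamma_0$; hence the two solutions coincide on $[0,T)$.

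I expect this to be essentially routine once Lemma~\ref{superlemma} is in hand, since all the real analytic work --- estimating the Green's-function representation \eqref{eq:h2gradu} of $\grad\E_\gamma$ and its Gateaux derivative \eqref{DF} --- has already been done there. The one point I would be careful about is that the fixed-point construction a priori only produces a solution with values in the small ball $U$, so that the uniqueness statement for solutions valued in all of $\I^2(S^1,\R^n)$ genuinely needs the separate connectedness argument above; beyond that I anticipate no real obstacle.
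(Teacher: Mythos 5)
Your proposal is correct and follows essentially the same route as the paper: the paper likewise works on the ball of Lemma~\ref{superlemma}, gets boundedness from (iv) and the Lipschitz property from (v) via the mean value inequality, and then simply cites the Banach-space Picard--Lindel\"of theorem (\cite[Theorem~3.A]{Zeidler:1986aa}) instead of rerunning the contraction-mapping iteration by hand. Your explicit fixed-point construction and the connectedness argument upgrading uniqueness to all $\I^2$-valued solutions are fine, just more detailed than what the paper records.
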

\begin{proof}
According to the Banach space Picard--Lindel\"of theorem in \cite[Theorem~3.A]{Zeidler:1986aa} it is sufficient to show that there is an $H^2$-ball containing $\gamma_0$ on which $F$ is bounded and Lipschitz. Using the ball from Lemma \ref{superlemma}, $F$ is bounded by (iv) and is Lipschitz by (v) and the mean value inequality (e.g. see \cite[Corollary. 4.2]{Lang:1999sf}). 
\end{proof}

For long-time existence we will imitate the method used in e.g. \cite[Theorem 9.1.6]{Palais:1988fv}.
\begin{lemma}\label{finitelengthlimit}
Assume that $\gamma:(a,b)\to \I^2(S^1,\R^n)$ is a $C^1$ curve with finite $H^2(ds)$-length. 
Then $\lim_{t\to b}\gamma(t)$ exists in $(\I^2(S^1,\R^n), {\rm dist})$.
\end{lemma}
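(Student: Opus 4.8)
The plan is to show that $\gamma(t)$ is Cauchy with respect to the metric $\dist$ as $t\to b$, and then invoke the metric completeness of $(\I^2(S^1,\R^n),\dist)$ from Theorem~\ref{bruv2} to conclude that the limit exists. The key point is that a $C^1$ curve $p:(a,b)\to\I^2(S^1,\R^n)$ with finite $H^2(ds)$-length is, essentially by definition of the length functional and of the Riemannian distance, Cauchy at the endpoint: this is the standard fact that in a length space a rectifiable path has a limit at its endpoint provided the space is complete.

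Concretely, first I would fix a sequence $t_n\nearrow b$ and estimate $\dist(\gamma(t_m),\gamma(t_n))$. Since $\gamma$ restricted to the interval $[t_m,t_n]$ (for $m<n$) is itself a piecewise $C^1$ path joining $\gamma(t_m)$ to $\gamma(t_n)$ in $\I^2(S^1,\R^n)$, the definition of $\dist$ gives
\[
\dist(\gamma(t_m),\gamma(t_n)) \leq \int_{t_m}^{t_n}\norm{\gamma'(t)}_{H^2(ds),\gamma(t)}\, dt.
\]
By hypothesis the total length $L:=\int_a^b\norm{\gamma'(t)}_{H^2(ds),\gamma(t)}\,dt$ is finite, so the function $t\mapsto\int_a^t\norm{\gamma'(\tau)}_{H^2(ds)}\,d\tau$ is monotone and bounded, hence Cauchy as $t\to b$. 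Therefore the right-hand side above tends to $0$ as $m,n\to\infty$, so $(\gamma(t_n))$ is a Cauchy sequence in $(\I^2(S^1,\R^n),\dist)$. By Theorem~\ref{bruv2} it converges to some $\gamma_\infty\in\I^2(S^1,\R^n)$.

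Finally I would check that the full limit $\lim_{t\to b}\gamma(t)$ exists and equals $\gamma_\infty$, not merely the limit along the chosen sequence. For arbitrary $t$ close to $b$ pick $t_n\in(t,b)$; then $\dist(\gamma(t),\gamma_\infty)\leq\dist(\gamma(t),\gamma(t_n))+\dist(\gamma(t_n),\gamma_\infty)\leq\int_t^{t_n}\norm{\gamma'(\tau)}_{H^2(ds)}\,d\tau+\dist(\gamma(t_n),\gamma_\infty)$, and letting $n\to\infty$ gives $\dist(\gamma(t),\gamma_\infty)\leq\int_t^b\norm{\gamma'(\tau)}_{H^2(ds)}\,d\tau$, which vanishes as $t\to b$ by finiteness of $L$. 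Since by Theorem~\ref{bruv2} (and the remark following it, that the $\dist$-topology coincides with the $H^2$-topology) convergence in $\dist$ is convergence in $H^2$, this completes the proof. There is no real obstacle here: the only thing to be slightly careful about is that $\gamma'(t)$ denotes the $H^2(S^1,\R^n)$-valued velocity of the $C^1$ curve $\gamma$, whose $H^2(ds)$-norm is a continuous (hence locally integrable) function of $t$ by the norm-equivalence estimates recalled in Section~\ref{section:formulation}, so that the length integral makes sense and the above manipulations are legitimate.
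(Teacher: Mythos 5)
Your proposal is correct and follows essentially the same route as the paper: bound $\dist(\gamma(t_m),\gamma(t_n))$ by the length of the restricted path, use finiteness of the total $H^2(ds)$-length to get the Cauchy property, and invoke the completeness of $(\I^2(S^1,\R^n),\dist)$ from Theorem~\ref{bruv2}. Your handling of the full limit via the direct tail estimate $\dist(\gamma(t),\gamma_\infty)\leq\int_t^b\norm{\gamma'(\tau)}_{H^2(ds)}\,d\tau$ is a slightly more quantitative version of the paper's sequential uniqueness argument, but the substance is the same.
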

\begin{proof}
For \emph{any} increasing sequence of times $t_i\to b$ we claim that $\gamma_i:=\gamma(t_i)$ is Cauchy in $(\I^2(S^1,\R^n), {\rm dist})$. 
Suppose not, then there exists $\varepsilon>0$ such that for all $N>0$ there exist $j,k >N$ such that $\dist(\gamma_j,\gamma_k)>\varepsilon$. 
Since $\dist(\gamma_j,\gamma_k)<\int_{t_j}^{t_k}\norm{\gamma_t}_{H^2(ds)}\, dt$, this contradicts the assumption that the length $\int_a^b \norm{\gamma_t}_{H^2(ds)}\, dt$ should be finite. 
By Theorem \ref{bruv2} we see that $\gamma_i$ converges to some $\gamma_b$. 
Moreover, $\gamma_b$ is unique: 
if we have another sequence $\tilde \gamma_i:=\gamma(\tilde t_i)\to \tilde \gamma_b$ and we take $\bar t_i$ to be the ordered union of $t_i$ and $\tilde t_i$ then $\gamma(\bar t_i)$ must also converge, 
which requires $\tilde \gamma_b = \gamma_b$. Finally, we have $\lim_{t\to b}\gamma(t)=\gamma_b$, because if not then there exists an increasing sequence $t_j\to b$ such that $\gamma(t_j)$ does not converge to $\gamma_b$.
\end{proof}

\begin{proposition} \label{git-existence}
Assume that $\gamma_0\in \I^2(S^1,\R^n)$. 
Then problem \eqref{eq:GF} possesses a unique global-in-time solution $\gamma$. 
\end{proposition}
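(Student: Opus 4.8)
The plan is to run the standard continuation argument for ODEs on the complete metric space $(\I^2(S^1,\R^n),\dist)$, modelled on \cite[Theorem 9.1.6]{Palais:1988fv}. By Proposition~\ref{shortexistence} there is a unique maximal solution $\gamma \in C^1([0,T^*),\I^2(S^1,\R^n))$ with $T^* \in (0,\infty]$; uniqueness on the maximal interval follows from local uniqueness by the usual connectedness argument (the set where two solutions agree is open, closed and nonempty). It remains to rule out $T^* < \infty$, so suppose for contradiction that $T^* < \infty$.

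The first step is to bound the $H^2(ds)$-length of the trajectory on $[0,T^*)$. Along the flow one has $\frac{d}{dt}\E(\gamma(t)) = d\E_{\gamma(t)}(\partial_t\gamma) = -\norm{\grad\E_{\gamma(t)}}^2_{H^2(ds)} = -\norm{\partial_t\gamma}^2_{H^2(ds)} \le 0$, so $\E$ is nonincreasing and in particular $\int_0^{T^*}\norm{\partial_t\gamma}^2_{H^2(ds)}\,dt = \E(\gamma_0) - \lim_{t\to T^*}\E(\gamma(t)) \le \E(\gamma_0) - \lambda^2 c_1 < \infty$, using that $\E \ge \lambda^2 \length \ge \lambda^2 c_1 > 0$ is bounded below (here I would actually only need $\E \ge 0$). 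Then by Cauchy--Schwarz the $H^2(ds)$-length is finite: $\int_0^{T^*}\norm{\partial_t\gamma}_{H^2(ds)}\,dt \le (T^*)^{1/2}\bigl(\E(\gamma_0)\bigr)^{1/2} < \infty$.

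By Lemma~\ref{finitelengthlimit}, the limit $\gamma_* := \lim_{t\to T^*}\gamma(t)$ exists in $(\I^2(S^1,\R^n),\dist)$, and since the $\dist$-topology coincides with the $H^2$-topology, $\gamma(t)\to\gamma_*$ in $H^2$; in particular $\gamma_* \in \I^2(S^1,\R^n)$ because this space is open in $H^2(S^1,\R^n)$ (Lemma~\ref{superlemma}~(i)). Now apply Proposition~\ref{shortexistence} with initial datum $\gamma_*$ to obtain $\delta>0$ and a solution on $[0,\delta)$ starting at $\gamma_*$; concatenating with $\gamma$ and invoking local uniqueness produces a solution on $[0,T^*+\delta)$, contradicting maximality of $T^*$. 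Hence $T^* = \infty$, and $\gamma \in C^1([0,\infty),\I^2(S^1,\R^n))$ is the desired unique global solution.

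The only genuinely delicate point is the extension across $T^*$: one must know that the limit curve $\gamma_*$ is still an \emph{immersion}, i.e. that $\abs{\gamma_*'}$ stays bounded away from zero, rather than merely lying in $H^2$. This is exactly what the metric completeness of $\I^2(S^1,\R^n)$ (Theorem~\ref{bruv2}) delivers — the limit is taken within $\I^2(S^1,\R^n)$ itself — so no separate a priori estimate on $\min\abs{\gamma'}$ is needed; the completeness encodes it. Everything else is a routine ODE continuation argument.
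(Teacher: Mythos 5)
Your proposal is correct and follows essentially the same route as the paper: energy decay gives the $L^2$-in-time bound on $\norm{\grad\E_\gamma}_{H^2(ds)}$, H\"older/Cauchy--Schwarz converts this to finite $H^2(ds)$-length on a finite maximal interval, and Lemma~\ref{finitelengthlimit} together with the metric completeness of $(\I^2(S^1,\R^n),\dist)$ yields a limit curve contradicting maximality. You merely spell out the continuation step (restarting via Proposition~\ref{shortexistence} and concatenating) that the paper leaves implicit, which is a fine addition rather than a deviation.
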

\begin{proof}
Suppose $\gamma:[0,T)\to \I^2(S^1,\R^n)$ is a maximal solution curve to $\gamma_t=-\grad \E_\gamma$. 
Abbreviating $\E(t):=\E(\gamma(t))$, since 
\[ 
\E(t)-\E(0)=\int_0^t \E'(\tau) \, d\tau = -\int_0^t \|\grad \E_\gamma \|_{H^2(ds)}^2 \, d\tau 
\]
we have 
\begin{equation}\label{l2time}
\int_0^T \|\grad \E_\gamma\|_{H^2(ds)}^2 \, dt \leq \E(0), 
\end{equation}
and then using the H\"older inequality we obtain  
\[ 
\int_0^T \|\grad \E_\gamma\|_{H^2(ds)}\, dt \leq \sqrt{T} \Bigl(\int_0^T \|\grad \E_\gamma\|_{H^2(ds)}^2 \, dt \Bigr)^{1/2} \leq \sqrt{T\E(0)}. 
\]
Observe that if $T$ is finite then the length of $\gamma$ is finite, and then it follows from Lemma \ref{finitelengthlimit} that $\lim_{t\to T}\gamma(t)$ exists in $(\I^2(S^1,\R^n), {\rm dist})$. 
This contradicts the maximality of $T$. 
Therefore Proposition \ref{git-existence} follows. 
\end{proof}


\section{\L ojasiewicz--Simon gradient inequality } \label{section:LSinequality}

Our general strategy for proving convergence of the flow to stationary points is well known: 
upgrade the $L^2$-in-time estimate \eqref{l2time} to an $L^1$ estimate using a \L ojasiewicz--Simon gradient inequality, and then the length of any trajectory is finite (see e.g. Chapter 7 of \cite{Feehan:2016aa} for abstract results). Typically the \L ojasiewicz--Simon inequality requires that $\E$ should be analytic, which we verify in Section \ref{subsection:analyticity}. The other standard requisite is a Fredholm second derivative, and so as mentioned in the introduction, an immediate obstruction is that the reparametrisation invariance of $\E$ implies that its second derivative has infinite dimensional nullspace, and is therefore not Fredholm. 
To get around this, we restrict $\E$ to the submanifold of arc length proportionally parametrized curves. 
We show that on this submanifold a gradient inequality is satisfied, and then use the reparametrisation symmetry to extend the inequality to the full space.

\subsection{Regularity of critical points and the second variation}
First we show that stationary points of $\E$ have higher regularity, and then we calculate the second derivative of $\E$. The higher regularity of critical points will be required in order to show in Proposition \ref{fredholm} that the second derivative is Fredholm.
\begin{proposition}\label{critsmooth}
$\gamma\in \htwospace $ is a stationary point of $\E$ iff $\partial_s^4\gamma\in L^2$ and 
\begin{align}
\label{EL-k}
& 2 \partial^4_s \gamma + \partial_s(3k^2\gamma_s-\lambda^2\gamma_s)=0\quad \text{a.e.}, \\ 
&\partial^2_s \gamma(0)=\partial^2_s \gamma(\length(\gamma)), \quad \partial^3_s \gamma(0)=\partial^3_s \gamma(\length(\gamma)). \label{eq:per}
\end{align}
\end{proposition}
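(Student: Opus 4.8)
The plan is to prove both directions by combining the weak (distributional) characterization of critical points with an elliptic bootstrap argument. First recall that $\gamma$ is a stationary point of $\E$ precisely when $d\E_\gamma v = 0$ for all $v\in \htwospace$, which by \eqref{dE} means
\[
\int_0^{\length(\gamma)}\bigl[2\ip{\gamma_{ss},v_{ss}} - 3k^2\ip{\gamma_s,v_s} - \lambda^2\ip{\gamma_{ss},v}\bigr]\,ds = 0
\]
for all such $v$. I would first establish the ``if'' direction, which is essentially an integration by parts: assuming $\partial_s^4\gamma\in L^2$ and that \eqref{EL-k}--\eqref{eq:per} hold, integrate the above by parts twice (legitimate once $\partial_s^3\gamma$ exists as an $L^2$ function and is absolutely continuous, which follows from $\partial_s^4\gamma\in L^2$), using the periodicity \eqref{eq:per} to kill the boundary terms, and land on $\int \ip{2\partial_s^4\gamma + \partial_s(3k^2\gamma_s - \lambda^2\gamma_s)\,, v}\,ds = 0$; since this holds for all $v$, equation \eqref{EL-k} follows, and conversely \eqref{EL-k} makes the integral vanish. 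Care is needed because the test functions $v$ range over closed (periodic) $H^2$ curves, so after two integrations by parts the surviving boundary contributions are exactly the jumps in $\partial_s^2\gamma$ and $\partial_s^3\gamma$ across $u=0\sim 1$; these must vanish against all periodic $v$, giving precisely \eqref{eq:per}.

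For the ``only if'' direction — the substantive regularity claim — I would argue by bootstrapping. Starting from $\gamma\in H^2$, we know $\gamma_s\in H^1\subset C^{1/2}$ and $k = |\gamma_{ss}|\in L^2$, so $k^2\gamma_s\in L^1$; the weak Euler--Lagrange equation says that $2\gamma_{ss}$ has distributional derivative (in the arc-length variable) of the form $\partial_s(2\gamma_{ss}) = \text{(something involving }\gamma_s, k^2\gamma_s, \gamma\text{ integrated)}$. More precisely, the weak form says $2\partial_s^2(\gamma_{ss}) = \partial_s(3k^2\gamma_s - \lambda^2\gamma_s)$ in the distributional sense after moving derivatives off $v$; rewriting, $\partial_s(2\partial_s^3\gamma) \in$ a space determined by the right-hand side. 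The key point: the weak equation tells us $2\gamma_{sss}$ (a priori only a distribution) equals $\int(3k^2 - \lambda^2)\gamma_s$ plus a constant, i.e. $\gamma_{sss}$ is an absolutely continuous function (an antiderivative of an $L^1$ function), hence $\partial_s^4\gamma$ exists as an $L^1$ function; then since $\gamma_s\in L^\infty$ and $k^2\in L^1$, actually $3k^2\gamma_s - \lambda^2\gamma_s\in L^1$ only — so the first pass gives $\partial_s^4\gamma\in L^1$ and $\gamma\in W^{4,1}$, hence $\gamma\in C^3$. But $C^3$ then improves $k^2\gamma_s$ to a continuous (indeed $C^1$) function, so $\partial_s^4\gamma$ is continuous, $\gamma\in C^4$, and one can iterate to get $\gamma$ smooth — in particular $\partial_s^4\gamma\in L^2$, and the periodicity conditions \eqref{eq:per} drop out from the boundary terms as above. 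I would present this as: (i) extract from the weak form that $\partial_s^3\gamma$ is absolutely continuous with $\partial_s^4\gamma\in L^1$; (ii) bootstrap to smoothness; (iii) read off \eqref{EL-k} and \eqref{eq:per}.

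A subtlety worth flagging is that all of this is cleanest if done in the arc-length parameter $s$, but $\gamma$ is a priori only $H^2$ in the original parameter $u$, and the change of variables $u\mapsto s$ is only $H^2$ (Lipschitz with $H^1$ derivative), so I must check that ``$\partial_s^4\gamma\in L^2$'' and the analogous statements transfer correctly between parameters — this uses $|\gamma'|\in H^1$ bounded away from $0$ (Lemma~\ref{superlemma}(i)) and the explicit chain-rule formulas for $\gamma_s, \gamma_{ss}$. The main obstacle is this first bootstrap step: squeezing out of the weak formulation the statement that $\partial_s^3\gamma$ is a genuine (absolutely continuous) function rather than merely a distribution, and correctly tracking which integrability class the nonlinear term $k^2\gamma_s$ lives in at each stage. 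Once $\gamma$ is known to be $C^3$ the rest is routine elliptic regression. I would therefore devote most of the written proof to steps (i) and to the change-of-variables bookkeeping, and treat the iteration in (ii) and the boundary-term computation in (iii) briefly.
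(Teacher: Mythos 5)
Your overall strategy is sound and, on the decisive step, arrives at the same intermediate identity as the paper but by a different mechanism. The paper never argues distributionally: it constructs an explicit admissible test function $\tilde w$ --- a second antiderivative $w$ of $2\gamma_{ss}+y$, where $y_s=(3k^2-\lambda^2)\gamma_s$, corrected by a cubic polynomial so that $\tilde w$ and $\tilde w_s$ vanish at $0$ and $\length(\gamma)$ --- and feeds it into $d\E_\gamma\tilde w=0$ to get $\int_0^{\length(\gamma)}|2\gamma_{ss}+y-P|^2\,ds=0$ with $P$ affine, hence the pointwise identity $2\gamma_{ss}=P-y$ at once. Your route (pass to the distributional form of the Euler--Lagrange equation on the circle and integrate it up, keeping the constants of integration) produces exactly the same identity after two integrations, so the two arguments are close cousins: the paper's construction is a hands-on du Bois--Reymond argument that sidesteps any discussion of admissible test functions, while yours is more standard in flavour but must carry the $u\leftrightarrow s$ change-of-variables bookkeeping you flag (smooth $\length(\gamma)$-periodic functions of $s$ pull back to legitimate $H^2(S^1,\R^n)$ variations because $|\gamma'|\in H^1$ is bounded away from zero). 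After the pointwise identity both proofs bootstrap, differentiate twice to obtain \eqref{EL-k}, and read off \eqref{eq:per} from boundary terms; your ``if'' direction matches the paper's implicit one.

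Two points need correcting, neither fatal. First, your derivative count is off by one: a single integration of the distributional equation gives $2\gamma_{sss}=-(3k^2-\lambda^2)\gamma_s+c_0$, which at that stage is merely an $L^1$ function; it is $\gamma_{ss}$ that is absolutely continuous on the first pass, and only after noting that $\gamma_{ss}$ is therefore bounded --- so that $\partial_s(k^2)=2\ip{\gamma_{ss},\gamma_{sss}}\in L^1$ and $(3k^2-\lambda^2)\gamma_s\in W^{1,1}$ --- do you get $\gamma_{sss}$ absolutely continuous with $\partial_s^4\gamma\in L^1$, and then $\partial_s^4\gamma\in L^\infty\subset L^2$ on the next round. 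Second, ``iterate to get $\gamma$ smooth'' is only true for the constant-speed reparametrisation: the weak equation gives no information about the tangential (parametrisation) part, and the paper notes explicitly in a footnote to this proof that $\gamma_{uuu}$ cannot be bootstrapped. This does not damage your argument, since the proposition concerns only arc-length derivatives and two rounds of the bootstrap already give everything the statement asks for, but the smoothness claim should be restricted to the unit-speed representative or dropped.
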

\begin{proof}
Recalling \eqref{dE} we have 
\begin{equation}\label{dE2}
d \E_\gamma V =\int^{\length}_0 \bigl[ 2\ip{\gamma_{ss},V_{ss}}-(3k^2-\lambda^2)\ip{\gamma_s,V_s} \bigr] \,ds
,
\end{equation} 
where $\length:=\length(\gamma)$. 
Assume $\gamma$ is a stationary point of $\E$, i.e. $d\E_{\gamma} V=0$ for all $V\in \htwospace$.
Let $y$ and $w$ be the solutions to 
\begin{equation}\label{eq:defy}
y_s=3k^2\gamma_s-\lambda^2\gamma_s,\qquad y(0)=0,
\end{equation}
\[ w_{ss}=2\gamma_{ss}+y,\qquad w(0)=0, \quad w_s(0)=0, \]
and define 
\[ 
\tilde{w}(s):= w(s)-\frac{s^2}{\length^2} w(\length)-\frac{s^2}{\length^2}(s-\length) \Bigl( w_s(\length)-\frac{2}{\length}w(\length) \Bigr), 
\]
so that $\tilde w(0),\tilde w(\length),\tilde w_s(0)$ and $\tilde w_s(\length)$ all vanish and $\tilde w \in \htwospace$. 
Moreover, $\tilde w_{ss}=w_{ss}-P$ where $P_{ss}=0$ and so $\int_0^\length \ip{P,\tilde w_{ss}} ds=0$. 
Since $\gamma$ is stationary $d\E_\gamma \tilde{w}=0$, from which it follows that
\begin{align*}
0=\int_0^\length \ip{2\gamma_{ss}+y,\tilde{w}_{ss}}\, ds 
&=\int^\length_0 \ip{2\gamma_{ss}+y-P,\tilde w_{ss}}\, ds\\
&=\int_0^\length | 2\gamma_{ss}+y-P |^2 \, ds
\end{align*}
and therefore 
\begin{equation}\label{eq:weakELE}
2\gamma_{ss}+y-P=0 \quad \text{a.e. in} \quad S^1. 
\end{equation} 
Considering only the normal component, since $P$ is smooth and from \eqref{eq:defy} $y$ is $H^1$, we observe that $k$ is also in $H^1$. 
But then $y_{uu}=6kk_u\gamma_u+(3k^2-\lambda^2)\gamma_{uu}$ is in $L^2$, so $y$ is $H^2$ and so is $k$. 
Note that since \eqref{eq:weakELE} gives no information about the \emph{tangential} component of $\gamma_{uu}$ 
we cannot bootstrap any further\footnote{$y_{uuu}=(6k_u^2+6kk_{uu})\gamma_u+12kk_u\gamma_{uu}+(3k^2-\lambda^2)\gamma_{uuu}$, but $\gamma_{uuu}$ is not necessarily $L^2$.}.
However, we already have enough $L^2$-derivatives to differentiate \eqref{eq:weakELE} twice with respect to $s$ and for $\gamma$ to satisfy the Euler-Lagrange equation \eqref{EL-k} (a.e.).
As for the periodicity conditions \eqref{eq:per}, integration by parts in \eqref{dE2} gives
\begin{align*}
d \E_\gamma V =	\begin{multlined}[t] 2\int^\length_{0} \ip{\gamma_{s^4}+((3k^2-\lambda^2)\gamma_s)_s,V}\, ds \\
 + \Bigl[ 2\ip{\gamma_{ss},V_s} -2\ip{\gamma_{s^3},V} +\ip{(3k^2-\lambda)\gamma_{s},V}\Bigr]_{0}^{\length}. 
\end{multlined}
\end{align*}
For $\gamma$ to be stationary the boundary terms must vanish for all $V\in H^2(S^1,\R^n)$. Choosing $V$ such that $V(0)=0=V(\length)$ we see that $\gamma_{ss}$ (and therefore $k$) must be periodic. Then, setting $V_s(0)=V_s(\length)$ (but $V(0)\neq 0$) and using the periodicity of $k$ and $\gamma_s$ we confirm that $\gamma_{s^3}$ is periodic. 
\end{proof}

Next we calculate the second variation of $\E$: 
\begin{lemma} \label{L-second-var}
Let $\gamma \in \I^2(S^1,\R^n)$. Then for $V,W\in H^2(S^1,\R^n)$ the second variation formula at $\gamma$ is given by 
\begin{equation}
\begin{aligned}\label{d2E}
d^2\E_\gamma(V,W) 
= \int_0^{\length(\gamma)} \bigl[ & \ip{W_{ss}, 2V_{ss}-2\ip{V_{ss},T}T-2\ip{V_s,\kappa}T-6\ip{V_s,T}\kappa } \\
& +\ip{W_s,  -2\ip{V_s,\kappa}\kappa -6\ip{V_{ss},\kappa}T-2\ip{V_{ss},T}\kappa }\\
& +\ip{W_s,-(3k^2-\lambda^2)V_s+(15k^2-\lambda^2)\ip{V_s,T}T} \bigr]\, ds. 
\end{aligned} 
\end{equation}
\end{lemma}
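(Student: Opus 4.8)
\textbf{Proof plan for Lemma \ref{L-second-var}.}
The plan is to differentiate the first variation formula \eqref{dE2} once more along a variation. Concretely, I would take a two-parameter family $\alpha(\varepsilon,\delta,u)$ with $\alpha(0,0,\cdot)=\gamma$, $\partial_\varepsilon\alpha|_0=V$ and $\partial_\delta\alpha|_0=W$, and compute $d^2\E_\gamma(V,W)=\partial_\delta\big(d\E_{\alpha(0,\delta,\cdot)}V\big)\big|_{\delta=0}$, using the form
\[
d\E_\gamma V=\int_0^{\length(\gamma)}\bigl[\,2\ip{\gamma_{ss},V_{ss}}-(3k^2-\lambda^2)\ip{\gamma_s,V_s}\,\bigr]\,ds,
\]
where it is understood that $V_s,V_{ss}$ are the arc length derivatives of the \emph{fixed} Euclidean vector field $V$ along the moving curve, so these too depend on $\delta$.

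The key ingredients are the commutation and differentiation identities already derived in the proof of Lemma \ref{superlemma}: along a variation in direction $W$ one has $\partial_\delta\,ds=\ip{W_s,\gamma_s}\,ds=\ip{W_s,T}\,ds$ at $\delta=0$, and for any field $\phi$ along the curve $\partial_\delta(\phi_s)=(\partial_\delta\phi)_s-\ip{W_s,T}\phi_s$. Applying the latter with $\phi=V$ gives $\partial_\delta V_s=-\ip{W_s,T}V_s$ (since $V$ is $\delta$-independent as a Euclidean field), and applying it again gives $\partial_\delta V_{ss}=\partial_\delta\big((V_s)_s\big)=(\partial_\delta V_s)_s-\ip{W_s,T}V_{ss}=-\ip{W_{ss},T}V_s-\ip{W_s,T}V_{ss}-\ip{W_s,T}V_{ss}$; one must be careful here that $\partial_\delta T=\partial_\delta\gamma_s=W_s-\ip{W_s,T}T$ and $\partial_\delta\gamma_{ss}=\partial_\delta\kappa$ likewise picks up the commutator terms, i.e. $\partial_\delta\kappa=W_{ss}-\ip{W_s,T}\kappa-\ip{W_{ss},T}T-\ip{W_s,\kappa}T$ (using $\ip{T,\kappa}=0$). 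Finally $\partial_\delta(k^2)=\partial_\delta\abs{\kappa}^2=2\ip{\partial_\delta\kappa,\kappa}=2\ip{W_{ss},\kappa}-2\ip{W_s,T}k^2$.

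With these in hand the computation is a bookkeeping exercise: differentiate each of the three factors in the two terms of $d\E_\gamma V$ (the measure $ds$, the coefficient, and the $V$-derivatives), collect all resulting terms, and use orthogonality $\ip{T,\kappa}=0$ and $\abs{T}=1$ to simplify. The term $2\ip{\gamma_{ss},V_{ss}}\,ds=2\ip{\kappa,V_{ss}}\,ds$ contributes $2\ip{\partial_\delta\kappa,V_{ss}}+2\ip{\kappa,\partial_\delta V_{ss}}+2\ip{\kappa,V_{ss}}\ip{W_s,T}$, and the term $-(3k^2-\lambda^2)\ip{\gamma_s,V_s}\,ds=-(3k^2-\lambda^2)\ip{T,V_s}\,ds$ contributes $-6\ip{W_{ss},\kappa}\ip{T,V_s}+6k^2\ip{W_s,T}\ip{T,V_s}-(3k^2-\lambda^2)\ip{W_s-\ip{W_s,T}T,V_s}-(3k^2-\lambda^2)\ip{T,\partial_\delta V_s}-(3k^2-\lambda^2)\ip{T,V_s}\ip{W_s,T}$. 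Regrouping by whether the $W$-contribution appears as $W_{ss}$ or $W_s$, and symmetrising the pairing so that $W$ appears in an inner product with an explicit vector, yields exactly \eqref{d2E}. The main obstacle is purely organizational: keeping correct track of the several commutator terms $\ip{W_s,T}$, $\ip{W_{ss},T}$ that arise from differentiating arc length derivatives and the arc length measure, and not double-counting them; once the identities for $\partial_\delta\kappa$, $\partial_\delta V_s$, $\partial_\delta V_{ss}$ and $\partial_\delta(ds)$ are fixed, no genuine difficulty remains. (Note also that the final formula need not be manifestly symmetric in $V,W$ as written, though it must be so after integration by parts; I would not belabor that point.)
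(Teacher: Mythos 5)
Your overall strategy coincides with the paper's: differentiate the first variation \eqref{dE2} along a variation in the direction $W$, using $\partial_\delta\, ds=\ip{W_s,T}\,ds$ and the commutation rule $\phi_{s\delta}=\phi_{\delta s}-\ip{W_s,T}\phi_s$, then collect terms; the two-parameter family versus the paper's $\alpha=\gamma+\varepsilon W$ with $V$ held fixed is only a cosmetic difference. The problem is that two of the variation identities you state are wrong, and with them the ``bookkeeping exercise'' does not produce \eqref{d2E}.

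Concretely: at $\delta=0$ one has $\partial_\delta\kappa=W_{ss}-\ip{W_{ss},T}T-\ip{W_s,\kappa}T-2\ip{W_s,T}\kappa$, because the commutator contributes one $-\ip{W_s,T}\kappa$ and differentiating $\ip{W_s,T}T$ inside $\partial_s(\partial_\delta T)$ contributes another; you have only one. Consequently $\partial_\delta(k^2)=2\ip{W_{ss},\kappa}-4k^2\ip{W_s,T}$, not $-2k^2\ip{W_s,T}$, so the contribution you record as $+6k^2\ip{W_s,T}\ip{T,V_s}$ must be $+12k^2\ip{W_s,T}\ip{T,V_s}$; this is exactly what yields the coefficient $15k^2-\lambda^2$ in \eqref{d2E} (as $12k^2+2(3k^2-\lambda^2)-(3k^2-\lambda^2)$), whereas your numbers would give $9k^2-\lambda^2$. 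Likewise $2\ip{\partial_\delta\kappa,V_{ss}}$ should contain $-4\ip{W_s,T}\ip{\kappa,V_{ss}}$ rather than $-2$, which is needed (together with the $-4$ from $2\ip{\kappa,\partial_\delta V_{ss}}$ and the $+2$ from the measure term) to reach the $-6\ip{V_{ss},\kappa}T$ coefficient. Second, the correct identity is $\partial_\delta V_{ss}=-\ip{W_{ss},T}V_s-\ip{W_s,\kappa}V_s-2\ip{W_s,T}V_{ss}$: you dropped the $-\ip{W_s,\kappa}V_s$ piece (it comes from $\partial_s\ip{W_s,T}=\ip{W_{ss},T}+\ip{W_s,\kappa}$), and without it the summand $-2\ip{V_s,\kappa}\kappa$ in the $W_s$-bracket of \eqref{d2E} never appears at all. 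These are elementary slips rather than a wrong idea, but since the entire content of the lemma is the exact formula, the computation as written fails; redoing the three identities for $\partial_\delta T$, $\partial_\delta\kappa$, $\partial_\delta V_s$, $\partial_\delta V_{ss}$ carefully and then collecting terms as you describe does give \eqref{d2E}, which is precisely the paper's proof.
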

\begin{proof}
Consider $d\E: U \to \htwospace^*$, $U$ an open subset of $\I^2(S^1,\R^n)$, and let $\alpha:=\gamma+\varepsilon W$ be a variation of $\gamma\in U$ in the direction of $W\in \htwospace$. 
Then using $\partial_\varepsilon ds=\ip{\alpha_{\varepsilon s}, \alpha_s}ds$, from \eqref{dE} we calculate
\begin{align*}
\partial_\varepsilon d\E_\alpha V 
=\begin{multlined}[t] \int_0^\length \bigl[ 2\ip{V_{ss\varepsilon},\alpha_{ss} }+2\ip{V_{ss},\alpha_{ss\varepsilon} }-6\ip{\alpha_{ss\varepsilon},\alpha_{ss}}\ip{V_s,\alpha_s}\\	
-(3k^2-\lambda^2) \bigl( \ip{V_{s\varepsilon},\alpha_s}+\ip{V_s,\alpha_{s\varepsilon}} \bigr) \\
+\bigl( 2\ip{V_{ss},\alpha_{ss}}-(3k^2-\lambda^2)\ip{V_s,\alpha_s} \bigr)\ip{\alpha_{\varepsilon s},\alpha_s} \bigr] \,ds. 
\end{multlined}
\end{align*}
Using the formula for commutation with the arc length derivative:
\[ 
\phi_{s\varepsilon}=\phi_{\varepsilon s}-\ip{\alpha_{\varepsilon s},\alpha_s}\phi_s, 
\]
and writing $T=\gamma_s$, we calculate
\begin{align*}
\alpha_{s\varepsilon}\bigm|_{\varepsilon=0}&=\alpha_{\varepsilon s}-\ip{\alpha_{\varepsilon s},\alpha_s}\alpha_s \bigm|_{\varepsilon=0}= W_s-\ip{W_s,T}T, \\
\alpha_{ss\varepsilon}\bigm|_{\varepsilon=0} 
&=\alpha_{\varepsilon s s}-\ip{\alpha_{\varepsilon ss},\alpha_s}\alpha_s -\ip{\alpha_{\varepsilon s},\alpha_{ss}}\alpha_s-2\ip{\alpha_{\varepsilon s},\alpha_s}\alpha_{ss} \bigm|_{\varepsilon=0}\\
&= W_{ss}-\ip{W_{ss},T}T-\ip{W_s,\kappa}T-2\ip{W_s,T}\kappa. 
\end{align*}
Moreover, noting that $V$ is independent of $\varepsilon$, we have 
\begin{align*}
V_{s\varepsilon}\bigm|_{\varepsilon=0}&= -\ip{\alpha_{\varepsilon s},\alpha_s}V_s \bigm|_{\varepsilon=0} = -\ip{W_s,T}V_s, \\
V_{ss\varepsilon}\bigm|_{\varepsilon=0}
&=-\ip{\alpha_{\varepsilon ss}, \alpha_s}V_s-\ip{\alpha_{\varepsilon s},\alpha_{ss}}V_s-2\ip{\alpha_{\varepsilon s},\alpha_s}V_{ss} \bigm|_{\varepsilon=0}\\
&= -\ip{W_{ss},T}V_s-\ip{W_s,\kappa}V_s-2\ip{W_s,T}V_{ss}.
\end{align*}
Therefore, evaluating $\partial_\varepsilon d\E_\alpha V$ at $\varepsilon =0$ and using $\ip{\alpha_{ss},\alpha_s=0}$:
\begin{equation*}
\begin{aligned}
&d^2\E_\gamma (V,W) \\
&= \int_0^{\length(\gamma)} \bigl[ -2\ip{W_{ss},T}\ip{V_s,\kappa}	-2\ip{W_s,\kappa}\ip{V_s,\kappa}-4\ip{W_s,T}\ip{V_{ss},\kappa} \\
& \qquad \qquad \quad +2\ip{V_{ss},W_{ss}}-2\ip{W_{ss},T}\ip{T,V_{ss}}-2\ip{W_s,\kappa}\ip{T,V_{ss}} \\
& \qquad \qquad \quad -4\ip{W_s,T}\ip{\kappa,V_{ss}} -6\ip{W_{ss},\kappa}\ip{V_s,T}+12k^2\ip{W_s,T}\ip{V_s,T}\\
& \qquad \qquad \quad +(3k^2-\lambda^2)\left (2\ip{W_s,T}\ip{V_s,T}-\ip{V_s,W_s}\right)\\
& \qquad \qquad \quad +(2\ip{V_{ss},\kappa}-(3k^2-\lambda^2)\ip{V_s,T})\ip{W_s,T} \bigr]\, ds \\ 
&=
\int_0^{\length(\gamma)} \bigl[ \ip{W_{ss}, 2V_{ss}-2\ip{V_{ss},T}T-2\ip{V_s,\kappa}T-6\ip{V_s,T}\kappa } \\
& \qquad \qquad \quad +\ip{W_s,  -2\ip{V_s,\kappa}\kappa -6\ip{V_{ss},\kappa}T-2\ip{V_{ss},T}\kappa }\\
& \qquad \qquad \quad +\ip{W_s,-(3k^2-\lambda^2)V_s+(15k^2-\lambda^2)\ip{V_s,T}T} \bigr]\, ds 
\end{aligned}
\end{equation*}
which proves the lemma. 
\end{proof}

\subsection{Analyticity} \label{subsection:analyticity}
We are assuming less differentiability than \cite[Section 3.1]{DallAcqua:2016aa} and so the proof is a little different. 
\begin{definition}[cf. \cite{DallAcqua:2016aa}, Section 3.1]	
Let $X, Y$ be Banach spaces and $D\subset X$ an open subset. 
A map $f:D\to Y$ is \emph{analytic at $x_0\in D$} if there exist continuous symmetric multilinear functions $a_j:X^j=X\times \ldots \times X\to Y,\, j\in \mathbb{N} $ and $a_0\in Y$ such that in a neighbourhood of $x_0$
\[
\sum_{j=0}^\infty \norm{a_j}_{\op} \norm{x-x_0}^j <\infty, \quad  \text{and} \quad f(x)=\sum_{j=0}^\infty a_j(x-x_0)^j,
\]
where $\norm{a_j}_{\op} $ is the operator norm and $a_j(x-x_0)^j:=a_j(x-x_0,\ldots ,x-x_0)$.
\end{definition}
Our goal is to prove that $\E:H^2(S^1,\R^n)\to \R$ is analytic. The proof will require a series of lemmas showing that the constituent functions such as $\gamma \mapsto |\gamma_u|$ are analytic. 
Let $H^1_{+}(S^1,\R):=\{ \alpha \in H^1(S^1,\R ): \alpha(u)>0 \}$ and define 
\[ f:H^1_{+}(S^1,\R)\to H^1_{+}(S^1,\R),\quad f(\alpha)(u):=\sqrt{\alpha(u)}.\] 
Note that by the Sobolev imbedding $\alpha$ is continuous and so 
\[ 
\norm{f(\alpha)}_{H^1}^2=\int_{S^1} \alpha(u) \,du +\int_{S^1}\frac{1}{4\alpha}\alpha_u^2\, du \leq \norm{\alpha}_{L^\infty} + \frac{1}{4}(\min |\alpha|)^{-1}\norm{\alpha_u}^2_{L^2}<\infty
\]
which shows that $f(\alpha)$ is indeed in $H^1$. 
\begin{lemma}\label{sqrt}
The function $f:H^1_{+}(S^1,\R)\to H^1_{+}(S^1,\R)$ is analytic.
\end{lemma}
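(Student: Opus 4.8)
The plan is to show that $f$ is analytic at every point $\alpha_0\in H^1_+(S^1,\R)$ by expanding $\sqrt{\alpha}$ in a binomial-type power series around $\alpha_0$ and verifying that the series converges in the $H^1$ operator-norm sense demanded by the definition. First I would write, for $\alpha$ near $\alpha_0$,
\[
\sqrt{\alpha}=\sqrt{\alpha_0}\,\sqrt{1+\tfrac{\alpha-\alpha_0}{\alpha_0}}
=\sqrt{\alpha_0}\sum_{j=0}^\infty \binom{1/2}{j}\Bigl(\tfrac{\alpha-\alpha_0}{\alpha_0}\Bigr)^j,
\]
and define the candidate multilinear maps $a_j:H^1(S^1,\R)^j\to H^1(S^1,\R)$ by
\[
a_j(h_1,\dots,h_j):=\binom{1/2}{j}\,\alpha_0^{\,1/2-j}\,h_1\cdots h_j,
\qquad a_0:=\sqrt{\alpha_0}.
\]
These are clearly symmetric and multilinear; the content is to bound $\norm{a_j}_{\op}$ and sum the resulting series.

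The key estimates are: (1) $H^1(S^1,\R)$ is a Banach algebra under pointwise multiplication, so there is a constant $c_A$ with $\norm{gh}_{H^1}\le c_A\norm{g}_{H^1}\norm{h}_{H^1}$; iterating, $\norm{h_1\cdots h_j}_{H^1}\le c_A^{\,j-1}\prod\norm{h_i}_{H^1}$. (2) Since $\alpha_0$ is continuous and bounded below by $\min|\alpha_0|>0$ (Sobolev imbedding), each power $\alpha_0^{\,1/2-j}$ lies in $H^1_+$ — indeed $\partial_u(\alpha_0^{1/2-j})=(\tfrac12-j)\alpha_0^{-1/2-j}\partial_u\alpha_0$, which is $L^2$ — and one gets a bound $\norm{\alpha_0^{1/2-j}}_{H^1}\le C\,m^{-j}$ for $m:=\min|\alpha_0|$ and a constant $C$ depending only on $\alpha_0$ (the $-1/2-j$ power in the derivative term only costs one extra factor of $m^{-1}$, absorbed into $C$). (3) The binomial coefficients satisfy $|\binom{1/2}{j}|\le 1$ for all $j$ (or at worst are bounded by a constant, which suffices). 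Combining, $\norm{a_j}_{\op}\le C c_A^{\,j-1} m^{-j}$, hence
\[
\sum_{j=0}^\infty \norm{a_j}_{\op}\norm{\alpha-\alpha_0}_{H^1}^j
\le \sqrt{\norm{\alpha_0}_\infty}+\frac{C}{c_A}\sum_{j=1}^\infty\Bigl(\frac{c_A}{m}\norm{\alpha-\alpha_0}_{H^1}\Bigr)^j<\infty
\]
whenever $\norm{\alpha-\alpha_0}_{H^1}<m/c_A$, which is the required neighbourhood. Within this neighbourhood the partial sums converge in $H^1$ and, by the Sobolev imbedding into $C(S^1)$, also uniformly pointwise; since for each fixed $u$ the scalar series converges to $\sqrt{\alpha(u)}$ by the classical binomial theorem (valid because $\abs{\alpha(u)-\alpha_0(u)}/\alpha_0(u)<1$), the $H^1$-limit must be $f(\alpha)$. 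This establishes $f(\alpha)=\sum_j a_j(\alpha-\alpha_0)^j$ with absolutely convergent operator norms, i.e. analyticity at $\alpha_0$, and $\alpha_0$ was arbitrary.

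The main obstacle is controlling the growth of $\norm{\alpha_0^{1/2-j}}_{H^1}$ as $j\to\infty$ — one must check that passing to negative powers of $\alpha_0$ produces only geometric growth $m^{-j}$ (times a fixed constant) and does not, for instance, cause the $H^1$-norm of the derivative term to blow up super-geometrically; this is where the lower bound $\min|\alpha_0|>0$ coming from the Sobolev imbedding, together with the Banach algebra property of $H^1(S^1)$, does all the work. Everything else is the routine bookkeeping of matching a Banach-space power series to its scalar counterpart.
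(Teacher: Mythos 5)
Your proposal is correct and follows essentially the same route as the paper: expand $\sqrt{\cdot}$ pointwise in its Taylor/binomial series about $\alpha_0$, regard the coefficients $\binom{1/2}{j}\alpha_0^{1/2-j}$ as $H^1$ multipliers (the paper writes them as $c_j\alpha_0^{1/2-j}$ and checks convergence by a ratio test on the recursion $b_{j+1}=\tfrac{1-2j}{2j+2}\alpha_0^{-1}b_j$ rather than by bounding the coefficients directly), use the Banach algebra property of $H^1(S^1)$ and the lower bound $\min\alpha_0>0$ to control the operator norms, and identify the $H^1$-limit with $f(\alpha)$ via the pointwise scalar series. The only slip is quantitative: since $\partial_u(\alpha_0^{1/2-j})=(\tfrac12-j)\alpha_0^{-1/2-j}\partial_u\alpha_0$, the correct bound is $\|\alpha_0^{1/2-j}\|_{H^1}\le C(1+j)m^{-j}$ rather than $Cm^{-j}$ (the factor $|\tfrac12-j|$ is not a fixed constant), but a polynomial factor does not affect convergence of the geometric-type series, so the argument still closes on the same neighbourhood, provided you also shrink the radius (as the paper does via $\varepsilon C_S<\min\alpha_0$) so that $|\alpha(u)-\alpha_0(u)|<\alpha_0(u)$ pointwise and the ball stays in $H^1_+$.
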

\begin{proof}
Let $\alpha_0\in H^1_{+}(S^1,\R)$ and consider the open $H^1$-ball $B_\varepsilon(\alpha_0)$ with radius $\varepsilon$ centred at $\alpha_0$. 
Assume $\varepsilon C_S<\min_{u \in S^1}|\alpha_0(u)|$, where $C_S$ is the Sobolev imbedding constant, so that for all $\alpha\in B_\varepsilon(\alpha_0)$ we have
\begin{equation}\label{ball}
0<\alpha_0(u)-\varepsilon C_S < \alpha(u) <  \varepsilon C_S+\alpha_0(u) 
\end{equation}
for all $u \in S^1$, and therefore $B_\varepsilon(\alpha_0)\subset H^1_{+}(S^1,\R)$. Considered as a function from $\R^+\to \R^+$, the square root is analytic. Indeed, the Taylor series about $x_0$ is
\[ 
\sum_{j=0}^\infty b_j(x_0)(x-x_0)^j, \quad b_j(x_0):=c_j x_0^{\frac{1}{2}-j}, \quad c_j:=\frac{(-1)^{j-1}(2j-3)!}{2^{2j-2}j!(j-2)!}, 
\]
the ratio of successive terms is $\frac{(1-2j)}{2(j+1)} (x-x_0)x_0^{-1}$, 
and so by the ratio test the series converges when $|x-x_0|< x_0$. It follows from \eqref{ball} that if $\alpha\in B_\varepsilon(\alpha_0)$ then for each $u$ the series 
\[ 
\sum_{j=0}^\infty b_j(\alpha_0(u))(\alpha(u)-\alpha_0(u))^j
\]
converges to $f(\alpha)(u)$. 
Moreover, since $\alpha_0(u)>0$ we have 
\[
\norm{b_j(\alpha_0)}_{H^1}^2 \leq c_j^2 \Bigl(\min_{u \in S^1} \alpha_0 \Bigr)^{1-2j} + \Bigl(\frac{1}{2}-j \Bigr)^2 c_j^2 \Bigl(\min_{u \in S^1} \alpha_0 \Bigr)^{-1-2j} \norm{\partial_u \alpha_0}_{L^2}^2, 
\]
and so $b_j(\alpha_0)\in H^1$. 
Since $H^1(S^1,\R)$ is a Banach algebra (see e.g. \cite[Theorem 4.39]{Adams:2003aa}) we may identify $b_j(\alpha_0)$ with the linear map from $H^1(S^1,\R)$ to itself given by $\alpha(u)\mapsto b_j(\alpha_0(u))\alpha(u) $.
Then to conclude the proof we need to show that $\sum_{j=0}^\infty \norm{ b_j(\alpha_0)}_{\op}\norm{\alpha-\alpha_0}_{H^1}^j$ converges, where the norm on $b_j(\alpha_0)$ is the operator norm. 
Note that
\[ b_{j+1}(\alpha_0)=\frac{1-2j}{2j+2}\alpha_0^{-1}b_j(\alpha_0). \]
Then using the Banach algebra property (\cite[Theorem 4.39]{Adams:2003aa}) there is a constant $K$ such that 
\[ 
\|b_{j+1}(\alpha_0)\|_{\op}=\sup_{\|v\|_{H^1}=1} \Bigl\|\frac{1-2j}{2j+2}\alpha_0^{-1}b_j(\alpha_0)v \Bigr\|_{H^1} 
\leq K \Bigl\| \frac{1-2j}{2j+2}\alpha_0^{-1} \Bigr\|_{H^1} \|b_j(\alpha_0)\|_{\op} 
\]
and so the ratio of successive terms is
\begin{multline*}
\frac{\|b_{j+1}(\alpha_0)\|_{\op}}{\|b_j(\alpha_0)\|_{\op}} \|\alpha-\alpha_0\|_{H^1} 
\leq K \Bigl\|\frac{1-2j}{2j+2}\alpha_0^{-1}\Bigr\|_{H^1} \|\alpha-\alpha_0\|_{H^1} \\
\to K \|\alpha_0^{-1}\|_{H^1} \|\alpha-\alpha_0\|_{H^1}	
\end{multline*}
as $j \to \infty$. 
By the ratio test the series converges if $\|\alpha-\alpha_0\|_{H^1} < K^{-1} \|\alpha_0^{-1}\|_{H^1}^{-1}$ which can be arranged by choosing $\varepsilon$ sufficiently small. 
\end{proof}

\begin{lemma}\label{speed}
The function $\I^2(S^1,\R^n) \to H^1_{+}(S^1,\R),\gamma \mapsto |\gamma_u|$ is analytic.
\end{lemma}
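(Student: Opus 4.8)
The plan is to write $\abs{\gamma_u}=f(\ip{\gamma_u,\gamma_u})$, where $f$ is the square root map of Lemma \ref{sqrt}, and to deduce analyticity from that of the two constituent maps together with the fact that a composition of analytic maps between Banach spaces is again analytic.

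First I would observe that $\gamma\mapsto\gamma_u$ is a bounded linear map $\htwospace\to H^1(S^1,\R^n)$, hence trivially analytic (its power series about any point consists of a constant and a linear term only). Next, the map $v\mapsto\ip{v,v}=\sum_{i=1}^n v_i^2$ is a continuous homogeneous polynomial of degree two: it is the restriction to the diagonal of the bounded symmetric bilinear map $(v,w)\mapsto\sum_i v_iw_i$, which takes values in $H^1(S^1,\R)$ precisely because $H^1$ is a Banach algebra (\cite[Theorem 4.39]{Adams:2003aa}). A homogeneous polynomial arising from a single bounded multilinear map is analytic (all but one of the coefficients $a_j$ vanish), so $\gamma\mapsto\ip{\gamma_u,\gamma_u}$ is analytic as a map $\I^2(S^1,\R^n)\to H^1(S^1,\R)$. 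Moreover, for $\gamma\in\I^2(S^1,\R^n)$ we have $\ip{\gamma_u(u),\gamma_u(u)}=\abs{\gamma'(u)}^2>0$ for every $u$, so this map in fact takes values in $H^1_{+}(S^1,\R)$, which is the domain of $f$.

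It then remains to compose with $f:H^1_{+}(S^1,\R)\to H^1_{+}(S^1,\R)$, which is analytic by Lemma \ref{sqrt}, noting that $f(\ip{\gamma_u,\gamma_u})=\abs{\gamma_u}$. The one point that is not a one-line observation, and which I would spell out (or supply a reference for), is that the composition $f\circ g$ of analytic maps is analytic: if $g(x)=\sum_j a_j(x-x_0)^j$ near $x_0$ and $f(y)=\sum_k c_k(y-y_0)^k$ near $y_0=g(x_0)$, then substituting the series for $g(x)-y_0$ into the series for $f$ and expanding gives a candidate power series for $f\circ g$ about $x_0$; the absolute convergence hypotheses $\sum_j\norm{a_j}_{\op}\norm{x-x_0}^j<\infty$ and $\sum_k\norm{c_k}_{\op}\norm{y-y_0}^k<\infty$ justify the rearrangement and show the resulting series converges absolutely on a small enough ball around $x_0$. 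Granting this, $\gamma\mapsto\abs{\gamma_u}$ is analytic on all of $\I^2(S^1,\R^n)$, which is Lemma \ref{speed}. The main obstacle is thus purely bookkeeping: packaging the composition-of-analytic-maps statement cleanly; the algebraic parts (polynomial map, Banach algebra, positivity of the image) are routine given the preceding lemma.
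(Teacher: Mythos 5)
Your proof is correct and follows essentially the same route as the paper: the decomposition $\abs{\gamma_u}=f\circ q\circ \partial_u$ with $\partial_u$ bounded linear, $q(\gamma)=\ip{\gamma,\gamma}$ a continuous quadratic map into $H^1$ (the paper verifies boundedness by a direct estimate where you invoke the Banach algebra property, which amounts to the same thing), and Lemma \ref{sqrt} for the square root. The only presentational difference is that the paper cites Whittlesey for the composition of analytic maps between Banach spaces, whereas you sketch the rearrangement argument; either is acceptable.
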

\begin{proof}
First note that $\partial_u: \I^2(S^1,\R^n) \to H^1(S^1,\R^n)$ is linear and continuous, therefore analytic. 
The map induced by the Euclidean inner product $\ip{\cdot,\cdot} : H^1(S^1,\R^n) \times H^1(S^1,\R^n) \to H^1(S^1,\R)$ satisfies 
\[
\| \ip{v,w} \|_{H^1}^2 = \int_{S^1} \bigl[ |\ip{v,w}|^2 + |\ip{v',w} + \ip{v,w'}|^2 \bigr] \, du \le c \|v\|_{H^1}^2 \|w\|_{H^1}^2. 
\]
So it is continuous bilinear and therefore the map $q(\gamma):= \ip{\gamma,\gamma}$ is analytic. 
Now by Lemma \ref{sqrt} we have that $\gamma \mapsto |\gamma_u|$ is a composition of analytic functions $f \circ q\circ \partial_u$ 
and is therefore analytic by e.g. \cite[p. 1079]{Whittlesey:1965aa}. 
\end{proof}

\begin{lemma}
The function $\rec: H^1_{+}(S^1,\R)\to H^1_{+}(S^1,\R),\, \psi(u)\mapsto \psi(u)^{-1}$ is analytic.
\end{lemma}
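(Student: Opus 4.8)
The plan is to mimic the structure of the proof of Lemma \ref{sqrt}, since the reciprocal function $x \mapsto x^{-1}$ on $\R^+$ is real-analytic with an explicit geometric-type Taylor expansion. First I would fix $\psi_0 \in H^1_{+}(S^1,\R)$ and, using the Sobolev imbedding $H^1(S^1)\subset C^{1/2}(S^1)\subset C^0(S^1)$, choose $\varepsilon>0$ small enough that $\varepsilon C_S < \min_{u\in S^1}\psi_0(u)$, so that every $\psi$ in the $H^1$-ball $B_\varepsilon(\psi_0)$ satisfies $0 < \psi_0(u) - \varepsilon C_S < \psi(u) < \psi_0(u) + \varepsilon C_S$ pointwise, and in particular $B_\varepsilon(\psi_0)\subset H^1_{+}(S^1,\R)$. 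This also guarantees $\rec(\psi)\in H^1$ via the estimate $\norm{\psi^{-1}}_{H^1}^2 = \int_{S^1}\psi^{-2}\,du + \int_{S^1}\psi^{-4}\psi_u^2\,du \leq (\min\psi)^{-2} + (\min\psi)^{-4}\norm{\psi_u}_{L^2}^2 < \infty$.

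Next I would write down the Taylor series of the scalar reciprocal about $x_0>0$: $x^{-1} = \sum_{j=0}^\infty (-1)^j x_0^{-1-j}(x-x_0)^j$, whose ratio of successive terms is $-(x-x_0)x_0^{-1}$, so by the ratio test it converges (to $x^{-1}$) precisely when $|x-x_0| < x_0$. By the pointwise inequality from \eqref{ball}-type bound, for $\psi \in B_\varepsilon(\psi_0)$ and each $u$ the series $\sum_j (-1)^j \psi_0(u)^{-1-j}(\psi(u)-\psi_0(u))^j$ converges to $\rec(\psi)(u)$. Setting $b_j := (-1)^j\psi_0^{-1-j}$, one checks $b_j \in H^1$ since $\norm{b_j}_{H^1}^2 \leq (\min\psi_0)^{-2-2j} + (1+j)^2(\min\psi_0)^{-4-2j}\norm{\partial_u\psi_0}_{L^2}^2 < \infty$, and since $H^1(S^1,\R)$ is a Banach algebra (\cite[Theorem 4.39]{Adams:2003aa}) we identify $b_j$ with the continuous linear multiplication map $\psi \mapsto b_j\psi$ on $H^1(S^1,\R)$, which realises the series as one in symmetric multilinear maps.

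Finally, to verify the operator-norm convergence condition in the definition of analyticity, I would use the recursion $b_{j+1} = -\psi_0^{-1}b_j$ together with the Banach algebra inequality to get a constant $K$ with $\norm{b_{j+1}}_{\op} \leq K\norm{\psi_0^{-1}}_{H^1}\norm{b_j}_{\op}$, so that the ratio of successive terms of $\sum_j \norm{b_j}_{\op}\norm{\psi-\psi_0}_{H^1}^j$ is bounded by $K\norm{\psi_0^{-1}}_{H^1}\norm{\psi-\psi_0}_{H^1}$; by the ratio test this series converges whenever $\norm{\psi-\psi_0}_{H^1} < K^{-1}\norm{\psi_0^{-1}}_{H^1}^{-1}$, which is arranged by shrinking $\varepsilon$. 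This establishes analyticity at the arbitrary point $\psi_0$, hence on all of $H^1_{+}(S^1,\R)$. There is no serious obstacle here: the only mild subtlety is the bookkeeping to ensure the scalar radius-of-convergence condition $|\psi(u)-\psi_0(u)|<\psi_0(u)$ and the $H^1$ operator-norm condition hold simultaneously on a common ball, but both follow from choosing $\varepsilon$ small relative to $\min\psi_0$ and to $\norm{\psi_0^{-1}}_{H^1}$; this is exactly parallel to Lemma \ref{sqrt}, so one could even shorten the write-up by simply pointing to that argument.
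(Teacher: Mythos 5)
Your argument is correct, but it takes a different route from the paper: the paper's proof of this lemma is a one-line citation to \cite[Appendix B.1]{DallAcqua:2016aa}, where the analyticity of the reciprocal map is established for $H^3$ functions, together with the remark that the same method works for $H^1$; you instead write out a self-contained proof in $H^1$ modelled on Lemma \ref{sqrt}, using the geometric series $x^{-1}=\sum_j(-1)^jx_0^{-1-j}(x-x_0)^j$, the Sobolev imbedding to get the uniform pointwise lower bound on the ball, the Banach algebra property of $H^1(S^1,\R)$, and the recursion $b_{j+1}=-\psi_0^{-1}b_j$ to control the operator norms by the ratio test. Both arguments are at bottom the same power-series idea (the cited proof is also of Neumann-series type), so what your version buys is self-containedness and a visible parallel with Lemma \ref{sqrt}, at the cost of a little length; the paper's version buys brevity. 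Two small points if you write it up: (i) as in Lemma \ref{sqrt}, the coefficient $b_j$ should really be identified with the symmetric $j$-linear map $(v_1,\dots,v_j)\mapsto b_jv_1\cdots v_j$ rather than a linear multiplication operator, which changes the norm estimates only by powers of the Banach algebra constant and is harmless after shrinking $\varepsilon$; and (ii) you should say explicitly that the absolute convergence of the series in $H^1$ together with the pointwise convergence to $\psi(u)^{-1}$ (via $H^1\hookrightarrow C^0$) identifies the $H^1$ limit with $\rec(\psi)$ — the same implicit step that appears in Lemma \ref{sqrt}.
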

\begin{proof}
This is proved for $H^3$ functions in \cite[Appendix B.1]{DallAcqua:2016aa}, and the same method of proof works for $H^1$ functions.
\end{proof}

We also require the following property of analytic functions: 
If $F:D\to Y_1, \, G:D\to Y_2$ are analytic and there exists a bilinear continuous mapping $*:Y_1\times Y_2\to Z$ into another Banach space, 
then the product $F*G:D\to Z, x\mapsto F(x)*G(x)$ is also analytic. 
According to \cite[p. 2175]{DallAcqua:2016aa} this can be proved using similar ideas as for the Cauchy product of series.
\begin{lemma}\label{diffsanalytic}
The function $\partial_s:\I^2(S^1,\R^n) \to H^1(S^1,\R^n)$, $\gamma \mapsto \gamma_s= \gamma_u / |\gamma_u|$ is analytic. 
\end{lemma}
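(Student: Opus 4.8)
The plan is to show that $\partial_s$ is obtained from the already-established analytic building blocks by the operations that preserve analyticity. First I would write $\gamma_s = \gamma_u \cdot |\gamma_u|^{-1}$ and identify the three factors: the map $\partial_u : \I^2(S^1,\R^n) \to H^1(S^1,\R^n)$, which is linear and continuous hence analytic; the map $\gamma \mapsto |\gamma_u| \in H^1_+(S^1,\R)$, which is analytic by Lemma \ref{speed}; and the reciprocal map $\rec : H^1_+(S^1,\R) \to H^1_+(S^1,\R)$, which is analytic by the preceding lemma. Composing the latter two gives that $\gamma \mapsto |\gamma_u|^{-1} \in H^1_+(S^1,\R)$ is analytic, again by the composition property (\cite[p.~1079]{Whittlesey:1965aa}).

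Next I would invoke the product property stated just before the lemma: the scalar multiplication $H^1(S^1,\R) \times H^1(S^1,\R^n) \to H^1(S^1,\R^n)$, $(\psi, v) \mapsto \psi v$, is continuous bilinear because $H^1(S^1,\R)$ is a Banach algebra acting on $H^1(S^1,\R^n)$ componentwise (the estimate $\norm{\psi v}_{H^1} \le c \norm{\psi}_{H^1}\norm{v}_{H^1}$ follows exactly as in the proof of Lemma \ref{speed}). Applying the product property to $F(\gamma) = \gamma_u$ and $G(\gamma) = |\gamma_u|^{-1}$ with this bilinear map $*$ yields that $\gamma \mapsto \gamma_u |\gamma_u|^{-1} = \gamma_s$ is analytic as a map into $H^1(S^1,\R^n)$.

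There is essentially no obstacle here; the only point requiring a moment's care is that the target of $\rec$ is $H^1_+(S^1,\R) \subset H^1(S^1,\R)$ and one wants the bilinear multiplication map defined on the full space $H^1(S^1,\R) \times H^1(S^1,\R^n)$, which is fine since $H^1_+$ is an open subset and restriction of an analytic map to the relevant domain stays analytic. So the proof is a short assembly: analyticity of $\partial_u$, of $\gamma \mapsto |\gamma_u|^{-1}$ (composition of Lemma \ref{speed} with $\rec$), and then the product rule for analytic maps with the Banach-algebra multiplication.

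\begin{proof}
The map $\partial_u : \I^2(S^1,\R^n) \to H^1(S^1,\R^n)$ is linear and continuous, hence analytic. By Lemma \ref{speed} the map $\gamma \mapsto |\gamma_u|$ is an analytic map $\I^2(S^1,\R^n) \to H^1_+(S^1,\R)$, and composing with the analytic map $\rec$ we obtain that $\gamma \mapsto |\gamma_u|^{-1}$ is analytic as a map into $H^1_+(S^1,\R) \subset H^1(S^1,\R)$. The componentwise multiplication $* : H^1(S^1,\R) \times H^1(S^1,\R^n) \to H^1(S^1,\R^n)$, $(\psi, v) \mapsto \psi v$, is bilinear and continuous, since for each component one has $\norm{\psi v_i}_{H^1} \le c \norm{\psi}_{H^1} \norm{v_i}_{H^1}$ by the Banach algebra property of $H^1(S^1,\R)$ (\cite[Theorem 4.39]{Adams:2003aa}), whence $\norm{\psi v}_{H^1} \le c \norm{\psi}_{H^1}\norm{v}_{H^1}$. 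Applying the product property for analytic maps recalled above to $F(\gamma) = \gamma_u$, $G(\gamma) = |\gamma_u|^{-1}$ and the bilinear map $*$, we conclude that $\gamma \mapsto \gamma_u |\gamma_u|^{-1} = \gamma_s$ is analytic.
\end{proof}
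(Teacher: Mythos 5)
Your proof is correct and follows essentially the same route as the paper: decompose $\gamma_s$ as the pointwise product of $\gamma_u$ (analytic since $\partial_u$ is linear and continuous) with $|\gamma_u|^{-1}$ (analytic as the composition of Lemma \ref{speed} with $\rec$), and apply the product property for analytic maps once the bilinear multiplication $H^1(S^1,\R)\times H^1(S^1,\R^n)\to H^1(S^1,\R^n)$ is shown to be continuous. The only cosmetic difference is that the paper verifies continuity of the product by a direct estimate using the Sobolev imbedding rather than citing the Banach algebra property componentwise, which is equivalent.
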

\begin{proof}
We observed in the two previous lemmas that $\partial_u$ and $\gamma \mapsto |\gamma_u|^{-1}$ are analytic on $\I^2(S^1,\R^n)$, with respective codomains $H^1(S^1,\R^n)$ and $H^1_+(S^1,\R)$. Moreover the pointwise product $*:H^1(S^1,\R)\times H^1(S^1,\R^n)\to H^1(S^1,\R^n)$ is bilinear and continuous. Indeed if $\alpha \in  H^1(S^1,\R)$ and $\gamma \in H^1(S^1,\R^n)$ then
\[
\|\alpha*\gamma\|_{H^1}\leq \|\alpha\|_{L^\infty} \|\gamma\|_{L^2} + \|\alpha\|_{L^\infty} \|\gamma_u\|_{L^2} + \|\alpha_u\|_{L^2} \|\gamma\|_{L^\infty}. 
\]
Since $\partial_s\gamma$ is the pointwise product of $\gamma_u$ and $|\gamma_u|^{-1}$, the result follows. 
\end{proof}

\begin{lemma}\label{diffss}
The function $\partial_s^2:\I^2(S^1,\R^n) \to L^2(S^1,\R^n)$ is analytic.
\end{lemma}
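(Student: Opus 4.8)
The plan is to imitate the proof of Lemma \ref{diffsanalytic}, expressing $\partial_s^2\gamma$ as a product of analytic functions through a continuous bilinear map. The starting point is the identity $\gamma_{ss}=|\gamma_u|^{-1}\,\partial_u(\gamma_s)$, which follows from $\partial_s=|\gamma_u|^{-1}\partial_u$ applied to $\gamma_s$.

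First I would note that by Lemma \ref{diffsanalytic} the map $\gamma\mapsto\gamma_s$ is analytic from $\I^2(S^1,\R^n)$ to $H^1(S^1,\R^n)$, and since $\partial_u:H^1(S^1,\R^n)\to L^2(S^1,\R^n)$ is linear and continuous, hence analytic, the composition $\gamma\mapsto\partial_u(\gamma_s)$ is analytic with codomain $L^2(S^1,\R^n)$. On the other hand, by Lemma \ref{speed} composed with the analyticity of $\rec$, the map $\gamma\mapsto|\gamma_u|^{-1}$ is analytic with codomain $H^1_+(S^1,\R)$. Next I would check that pointwise multiplication $*:H^1(S^1,\R)\times L^2(S^1,\R^n)\to L^2(S^1,\R^n)$ is bilinear and continuous: this follows from the Sobolev imbedding $H^1(S^1)\subset C^0(S^1)$, since
\[
\norm{\alpha*v}_{L^2}\leq\norm{\alpha}_{L^\infty}\norm{v}_{L^2}\leq C_S\norm{\alpha}_{H^1}\norm{v}_{L^2}.
\]
Then, since $\gamma_{ss}=|\gamma_u|^{-1}*\partial_u(\gamma_s)$ is the product of two analytic functions via this continuous bilinear map, the product property for analytic maps recalled before Lemma \ref{diffsanalytic} yields that $\partial_s^2:\I^2(S^1,\R^n)\to L^2(S^1,\R^n)$ is analytic.

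I do not expect a genuine obstacle here; the only point requiring a little care is keeping track of codomains, namely that $\partial_u$ sends $H^1$ only into $L^2$ (not back into $H^1$), so that $\partial_u(\gamma_s)\in L^2(S^1,\R^n)$ and consequently $\partial_s^2$ is analytic with codomain $L^2(S^1,\R^n)$, which is exactly the assertion of the lemma. Alternatively one could start from $\gamma_{ss}=\gamma''|\gamma'|^{-2}-\ip{\gamma'',\gamma'}\gamma'|\gamma'|^{-4}$ and argue termwise, but the one-line factorisation above is cleaner and reuses the lemmas already established.
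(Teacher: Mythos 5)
Your proposal is correct and follows essentially the same route as the paper: the factorisation $\gamma_{ss}=|\gamma_u|^{-1}\gamma_{su}$, the analyticity of $\gamma\mapsto\gamma_{su}$ as the composition of Lemma \ref{diffsanalytic} with the continuous linear map $\partial_u:H^1\to L^2$, and the continuity of the bilinear product $*:H^1(S^1,\R)\times L^2(S^1,\R^n)\to L^2(S^1,\R^n)$. The only difference is that you spell out the Sobolev-imbedding estimate for the product, which the paper leaves implicit.
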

\begin{proof}
Since $\partial_u:H^1(S^1,\R^n)\to L^2(S^1,\R^n)$ is also linear and continuous, 
by Lemma \ref{diffsanalytic} we have that $\partial_u\partial_s:\I^2(S^1,\R^n) \to L^2(S^1,\R^n)$ is a composition of analytic functions, therefore analytic. 
The product $*:H^1(S^1,\R)\times L^2(S^1,\R^n)\to L^2(S^1,\R^n)$ is also bilinear and continuous, and $\gamma_{ss}=|\gamma_u|^{-1} \gamma_{su}$, so $\partial_s^2$ is analytic.
\end{proof}

\begin{proposition} \label{energy-analytic-1}
The energy $\E: \I^2(S^1,\R^n) \to \R$ is analytic. 
\end{proposition}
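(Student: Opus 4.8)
The plan is to write $\E(\gamma)=\int_\gamma k^2\,ds+\lambda^2\length(\gamma)$ entirely in terms of the parameter $u$ and then exhibit it as a finite composition and product of maps already shown to be analytic, followed by integration (a continuous linear functional). Recall from Lemma~\ref{superlemma}(iii) that $k^2=|\gamma_{ss}|^2$ and that $\length(\gamma)=\int_{S^1}|\gamma_u|\,du$, so using $ds=|\gamma_u|\,du$ we may write
\[
\E(\gamma)=\int_{S^1}\ip{\gamma_{ss},\gamma_{ss}}\,|\gamma_u|\,du+\lambda^2\int_{S^1}|\gamma_u|\,du.
\]
The second term is immediate: $\gamma\mapsto|\gamma_u|$ is analytic from $\I^2(S^1,\R^n)$ into $H^1_+(S^1,\R)$ by Lemma~\ref{speed}, and integration $H^1(S^1,\R)\to\R$ is continuous linear, hence analytic; the composition of analytic maps is analytic.

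For the first term I would proceed as follows. By Lemma~\ref{diffss} the map $\partial_s^2:\I^2(S^1,\R^n)\to L^2(S^1,\R^n)$ is analytic. The Euclidean inner product induces a continuous bilinear map $L^2(S^1,\R^n)\times L^2(S^1,\R^n)\to L^1(S^1,\R)$, since $\|\ip{v,w}\|_{L^1}\leq\|v\|_{L^2}\|w\|_{L^2}$ by Cauchy--Schwarz; applying the product rule for analytic maps (stated in the excerpt, before Lemma~\ref{diffsanalytic}) with $F=G=\partial_s^2$ gives that $\gamma\mapsto|\gamma_{ss}|^2$ is analytic into $L^1(S^1,\R)$. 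Next, pointwise multiplication $L^1(S^1,\R)\times H^1_+(S^1,\R)\to L^1(S^1,\R)$, $(\phi,\alpha)\mapsto\phi\alpha$, is continuous bilinear because $\|\phi\alpha\|_{L^1}\leq\|\alpha\|_{L^\infty}\|\phi\|_{L^1}\leq C_S\|\alpha\|_{H^1}\|\phi\|_{L^1}$ via the Sobolev embedding $H^1\hookrightarrow C^0$; applying the product rule again with $F=\partial_s^2$-squared and $G$ the speed map of Lemma~\ref{speed} shows $\gamma\mapsto|\gamma_{ss}|^2|\gamma_u|$ is analytic into $L^1(S^1,\R)$. Finally compose with the continuous linear integration functional $L^1(S^1,\R)\to\R$ to conclude that the first term is analytic. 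Adding the two analytic terms (sums of analytic maps into the same Banach space are analytic, directly from the definition) yields that $\E$ is analytic on $\I^2(S^1,\R^n)$.

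There is no serious obstacle here; the only point requiring a little care is bookkeeping the target spaces so that every bilinear operation invoked really is continuous on the stated pair of spaces — in particular using $L^1$ (rather than $L^2$) as the target for $|\gamma_{ss}|^2$, since $\partial_s^2$ is only known to land in $L^2$, and then checking that multiplication by the $H^1$ function $|\gamma_u|$ and integration both behave well on $L^1$. Once the spaces are chosen correctly, analyticity of $\E$ follows formally from Lemmas~\ref{sqrt}, \ref{speed}, \ref{diffsanalytic}, \ref{diffss}, the product rule for analytic maps, and the elementary fact that analyticity is preserved under composition with continuous linear maps and under finite sums.
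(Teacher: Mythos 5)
Your proposal is correct and follows essentially the same route as the paper: analyticity of $\partial_s^2$ into $L^2$ (Lemma~\ref{diffss}), the continuous bilinear pairing $L^2\times L^2\to L^1$, pointwise multiplication $L^1\times H^1\to L^1$ via the Sobolev embedding, the product rule for analytic maps, and finally integration as a bounded linear functional on $L^1$. The only difference is expository (you treat the two summands separately and spell out the $L^1$ bookkeeping more explicitly), so no further changes are needed.
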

\begin{proof}
Recall that $\E(\gamma)=\int_{S^1} [ \ip{\gamma_{ss},\gamma_{ss}}|\gamma_u|+\lambda |\gamma_u|] \, du$. 
Lemma \ref{diffss} implies that $\partial_s^2$ is analytic. 
Moreover, the Euclidean inner product is continuous bilinear on $L^2(S^1,\R^n)\times L^2(S^1,\R^n)\to L^1(S^1,\R)$, 
pointwise multiplication $*: L^1(S^1,\R)\times H^1(S^1,\R) \to L^1(S^1,\R)$ is continuous bilinear, and the sum of analytic functions is analytic. 
Thus the integrand is analytic as a function $\I^2(S^1,\R^n) \to L^1(S^1,\R)$. 
Integration is of course linear and bounded on $L^1$, so the energy is a composition of analytic functions.
\end{proof}

\subsection{The submanifold of arc length proportional parametrized curves} \label{subsection:al-para-curves}
Denote $H^1_{zm}(S^1,\R):=\{\alpha\in H^1(S^1,\R): \int_{S^1} \alpha \,du=0\}$ and define 
\[ \Phi:\I^2(S^1,\R^n) \to H^1_{zm}(S^1,\R),\quad \Phi(\gamma):=|\gamma_u|-\length(\gamma).\]
Then $\Omega:=\Phi^{-1}(0)$ is the subset of $\I^2(S^1,\R^n)$ consisting of curves which are parametrized proportional to arc length. 
\begin{proposition} \label{proposition:Omega-210630}
The set $\Omega$ of arc length proportional parametrized curves is an analytic submanifold of $H^2(S^1,\R^n)$. 
\end{proposition}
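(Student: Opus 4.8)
The plan is to exhibit $\Omega$ as the preimage of a regular value of an analytic submersion and then invoke the analytic implicit function theorem. First I would note that $\Phi=\pi\circ L$, where $L:\I^2(S^1,\R^n)\to H^1_+(S^1,\R)$, $L(\gamma):=|\gamma_u|$, is analytic by Lemma~\ref{speed}, and $\pi:H^1(S^1,\R)\to H^1_{zm}(S^1,\R)$, $\pi(f):=f-\int_{S^1}f\,du$, is a continuous linear projection and hence analytic; indeed $\length(\gamma)=\int_{S^1}|\gamma_u|\,du$, so $\pi(L(\gamma))=|\gamma_u|-\length(\gamma)=\Phi(\gamma)$. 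Thus $\Phi$ is analytic (and visibly takes values in $H^1_{zm}(S^1,\R)$), with differential
\[
d\Phi_\gamma V=\pi\bigl(dL_\gamma V\bigr)=\ip{V_u,T}-\int_{S^1}\ip{V_u,T}\,du,\qquad T:=\gamma_s=\tfrac{\gamma_u}{|\gamma_u|},
\]
for $V\in\htwospace$, where $T\in H^1(S^1,\R^n)$ by Lemma~\ref{superlemma}(ii).

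The substantive work is to verify that $0$ is a regular value, i.e.\ that $d\Phi_\gamma:\htwospace\to H^1_{zm}(S^1,\R)$ is surjective with complemented kernel for every $\gamma\in\Omega$. I would obtain both properties at once by constructing a \emph{bounded linear right inverse} $B:H^1_{zm}(S^1,\R)\to\htwospace$ of $d\Phi_\gamma$; its existence forces surjectivity and the topological splitting $\htwospace=\ker d\Phi_\gamma\oplus B\bigl(H^1_{zm}(S^1,\R)\bigr)$. For $\psi\in H^1_{zm}(S^1,\R)$ the idea is to take $g:=\psi\,T+W_\psi\in H^1(S^1,\R^n)$, where $W_\psi\in H^1(S^1,\R^n)$ is chosen with $W_\psi(u)\perp T(u)$ pointwise and $\int_{S^1}W_\psi\,du=-\int_{S^1}\psi\,T\,du$, so that $\int_{S^1}g\,du=0$; then $B\psi(u):=\int_0^u g(\tau)\,d\tau$ is a well-defined element of $\htwospace$ with $(B\psi)_u=g$, and since $W_\psi\perp T$, $|T|\equiv1$ and $\int_{S^1}\psi\,du=0$ one checks $\ip{g,T}=\psi|T|^2+\ip{W_\psi,T}=\psi$, whence $d\Phi_\gamma(B\psi)=\pi(\psi)=\psi$.

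The point at which the geometry intervenes --- and the step I expect to be the main obstacle, albeit a mild one --- is producing $W_\psi$ depending linearly and boundedly on $\psi$. This reduces to showing that the linear map $\Lambda:\{W\in H^1(S^1,\R^n):W\perp T\}\to\R^n$, $W\mapsto\int_{S^1}W\,du$, is surjective; any linear right inverse of $\Lambda$ is then automatically bounded, its domain $\R^n$ being finite-dimensional. If $\Lambda$ were not onto there would be $a\in\R^n\setminus\{0\}$ with $\int_{S^1}\ip{a,W}\,du=0$ for all admissible $W$; testing against $W(u)=\eta(u)\bigl(a-\ip{a,T(u)}T(u)\bigr)$ with $\eta\in H^1(S^1,\R)$ arbitrary forces $\ip{a,T(u)}^2\equiv|a|^2$, hence $T\equiv\pm a/|a|$ by continuity; but then $0=\int_{S^1}\gamma_u\,du=\length(\gamma)\,T\neq0$, a contradiction. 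Composing $\psi\mapsto-\int_{S^1}\psi\,T\,du$ (bounded $H^1(S^1,\R)\to\R^n$) with a fixed right inverse of $\Lambda$ produces $W_\psi$, and composing further with the bounded primitive map $g\mapsto\int_0^u g\,d\tau$ from zero-mean elements of $H^1(S^1,\R^n)$ into $\htwospace$ shows that $B$ is bounded.

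Finally, with $\Phi$ analytic and $0$ a regular value, the analytic implicit function theorem gives that $\Omega=\Phi^{-1}(0)$ is an analytic submanifold of $\I^2(S^1,\R^n)$; since $\I^2(S^1,\R^n)$ is an open subset of $\htwospace$ by Lemma~\ref{superlemma}(i), $\Omega$ is an analytic submanifold of $\htwospace$. Every ingredient other than the surjectivity of $\Lambda$ is either immediate from the analyticity lemmas of Section~\ref{subsection:analyticity} or a formal consequence of the existence of a bounded right inverse.
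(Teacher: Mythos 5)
Your proposal is correct and follows the same overall skeleton as the paper: show $\Phi$ is analytic (composition of Lemma~\ref{speed} with a bounded linear projection), verify that $\mathbf 0$ is a regular value, and conclude via the analytic inverse/implicit function theorem of Whittlesey. Where you genuinely diverge is in the surjectivity-and-splitting step. The paper constructs an orthonormal normal frame $\{T,\nu_i\}$ by solving the ODE \eqref{frame}, recasts the periodicity requirement on $v$ as a controllability problem for the system \eqref{cs2}, and invokes nonsingularity of the controllability Gramian $W=\int_0^1 BB^T\,du$ from \cite{Brockett:1970aa}; the kernel then splits simply because it is a closed subspace of a Hilbert space. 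You instead build an explicit bounded linear right inverse $B$ of $d\Phi_\gamma$: correct $\psi T$ by a pointwise-normal field $W_\psi$ whose mean cancels $\int_{S^1}\psi T\,du$, reducing the whole matter to the surjectivity of the finite-rank map $\Lambda$, which you prove by the same geometric obstruction the paper uses for the Gramian (a closed curve cannot have tangent confined to a single direction $\pm a/|a|$ -- your version makes this explicit via $\int_{S^1}\gamma_u\,du=0$). Your route buys a cleaner package: the splitting comes for free from the projection $B\circ d\Phi_\gamma$ (no Hilbert structure needed), and the ODE frame and control theory are avoided entirely. The paper's heavier construction is not wasted in context, however: the frame $\nu_i$, the cutoff $\beta$, and the Gramian \eqref{eq:gramian} are reused later to define the right inverse $r_\alpha$ in \eqref{eq:r} and to obtain the uniform estimates of Lemma~\ref{estdjpr} and Appendix~\ref{appendix}, which your pointwise construction would have to be adapted (with uniform bounds over a set of curves) to replace.
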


\begin{proof}
We will show that $\mathbf{0}\in H^1(S^1,\R)$ is a regular value of $\Phi$. For the derivative we get
\begin{equation}\label{dphi}
d\Phi_\gamma v=\frac{1}{|\gamma'|}\ip{v', \gamma' }-\int_{S^1} \frac{1}{|\gamma'|}\ip{v', \gamma' }du,   
\end{equation}
and so $\mathbf 0$ is a regular value if for all $w \in H^1_{zm}(S^1,\R)$ there exists $v\in H^2(S^1,\R^n)$ such that 
\begin{align}\label{surjective}
\frac{1}{|\gamma'|}\ip{v', \gamma' }-\int_{S^1} \frac{1}{|\gamma'|}\ip{v', \gamma' }du =w. 
\end{align}
To show that such a $v$ exists we will need
an orthonormal frame $\{T,\nu_i\}$ along $\gamma$ which we construct as follows. First let $\{T(0),\nu_i^0\}$ be an orthonormal basis at $\gamma(0)$, and then let $\nu_i(u), \,i=1,\ldots n-1$ be the solutions to 
\begin{equation}\label{frame}
\nu_i'=-\frac{1}{\abs{\gamma'}^2}\ip{\nu_i,\gamma''}\gamma', \quad \nu_i(0)=\nu_i^0 \, .
\end{equation}
Then 
\begin{align*}
	\frac{d}{du}\ip{\gamma',\nu_i}&=\ip{\gamma'',\nu_i}-\frac{1}{\abs{\gamma'}^2}\ip{\gamma',\gamma'}\ip{\nu_i,\gamma''}=0
\end{align*}
and since $\ip{T(0),\nu_i^0}=0$ we have $\ip{\gamma',\nu_i}=0$ identically.  Moreover
\begin{align*}
	\frac{d}{du}\ip{\nu_i,\nu_j}&=-\frac{1}{\abs{\gamma'}^2}\ip{\nu_i,\gamma''}\ip{\gamma',\nu_j}-\frac{1}{\abs{\gamma'}^2}\ip{\nu_j,\gamma''}\ip{\nu_i,\gamma'}=0
\end{align*}
from which it follows that the orthonormality of $\{T(0),\nu_i^0\}$ is indeed preserved and $\{T,\nu_i\}$ is an orthonormal frame.

Now given $w\in H^1_{zm}(S^1,\R)$ we let $v$ be a solution of 
\begin{equation}\label{controlsystem}
v'=w T+\beta\sum_{i=1}^{n-1}\nu_i \xi_i,  
\end{equation}
where $\beta:S^1\to \R$ is any smooth function satisfying $\beta(0)=0=\beta(1)$ and thereby ensuring $v'(0)=v'(1)$, and $\xi\in H^1(S^1, \R^{n-1})$ is a \emph{control} function which we are free to choose in order to ensure $v$ satisfies the zeroth order periodicity condition $v(0)=v(1)$.
If this is possible then the solution $v$ satisfies \eqref{surjective} and $d\Phi_\gamma$ is surjective. 

In fact, it will be sufficient to consider the system 
\begin{equation}\label{cs2}
x'=\beta \sum_{i=1}^{n-1}\nu_i \xi_i
\end{equation}
because if $y$ is any solution to $y'=w T$ and we can control $x$ from e.g. $x(0)=y(0)$ to $x(1)=y(1)$, then we let $v=y-x$ and $v(0)=0=v(1)$.
According to \cite[p. 76]{Brockett:1970aa}, a sufficient condition for the existence of such a control is that the matrix
\begin{equation}\label{eq:gramian}
	W:=\int_0^1 BB^T du, \quad \text{where } B:=\beta [\nu_1 \ldots \nu_{n-1} ]
\end{equation}
should be non-singular (here $B$ is an $n\times (n-1)$ matrix with columns $\nu_i$, and $B^T$ its transpose), in which case a particular control which drives the solution from $x(0)$ to $x(1)$ is $\xi=B^T W^{-1}(x(1)-x(0))$.

Suppose $W$ is singular. Then there is a constant non-zero vector $a\in \R^n$ such that 
\[ 0=a^T W a=\int_0^1 (a^T B)(a^T B)^T\,du\]
which implies that $a^T B=\beta[\ip{a ,\nu_1},\ldots, \ip{a, \nu_{n-1}}]=0$ on $(0,1)$, i.e. $a$ is in the direction of $\pm T$. But this is impossible because $\gamma$ is closed and not constant, and $T$ is continuous.

We therefore have that $d\Phi_\gamma$ is surjective, and its kernel splits because it is a closed subspace of a Hilbert space. 
Therefore $\mathbf 0$ is a regular value of $\Phi$ and $\Omega$ is a submanifold (see e.g. \cite[Theorem~ 2.2.2]{Schrader:2016aa}, with \cite[Proposition~ 2.3]{Lang:1999sf}). 
To see that it is an \emph{analytic} submanifold we note that (e.g. in the proof of \cite[Proposition~ 2.3]{Lang:1999sf}) the charts for $\Omega$ are constructed by applying the inverse function theorem to $\Phi$. 
Since $\Phi$ is analytic as a consequence of Lemma~\ref{speed}, local inverses of $\Phi$ will also be analytic by a theorem of Whittlesey (\cite[p. 1081]{Whittlesey:1965aa}). 
So the charts are analytic, i.e. $\Omega$ is analytic.
\end{proof}

Combining Proposition \ref{energy-analytic-1} with Proposition \ref{proposition:Omega-210630}, we have: 
\begin{corollary}\label{restrictionanalytic}
The restriction $\E|\Omega$ is analytic.
\end{corollary}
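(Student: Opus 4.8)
The plan is to derive Corollary~\ref{restrictionanalytic} as an immediate consequence of the two preceding results, so the ``proof'' is really just an observation about how analyticity behaves under restriction to an analytic submanifold. First I would recall that Proposition~\ref{energy-analytic-1} establishes that $\E:\I^2(S^1,\R^n)\to\R$ is analytic, and Proposition~\ref{proposition:Omega-210630} establishes that $\Omega$ is an analytic submanifold of $H^2(S^1,\R^n)$, meaning there is an analytic atlas of charts $\psi:\Omega\supset V\to W\subset E$ (where $E$ is a closed complement of the tangent space, i.e.\ $\ker d\Phi_\gamma$) such that the inclusion $\iota:\Omega\hookrightarrow\I^2(S^1,\R^n)$ is analytic in these charts.

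The key step is then simply that $\E|\Omega=\E\circ\iota$, and a composition of analytic maps between Banach spaces (or open subsets thereof, read through analytic charts) is analytic; this is the same composition principle already invoked in Lemma~\ref{speed} and Lemma~\ref{diffsanalytic}, citing \cite[p.~1079]{Whittlesey:1965aa}. Concretely, in a chart $\psi$ around a point of $\Omega$, the local representative of $\E|\Omega$ is $\E\circ\iota\circ\psi^{-1}$, a composite of the analytic map $\psi^{-1}$ (local chart inverse, analytic since $\Omega$ is an analytic submanifold), the analytic inclusion $\iota$, and the analytic functional $\E$; hence it is analytic, which by definition means $\E|\Omega$ is analytic.

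There is essentially no obstacle here — the only point requiring a moment's care is bookkeeping about what ``analytic submanifold'' and ``analytic map on a submanifold'' mean, i.e.\ that these notions are defined chart-wise and that the inclusion is a morphism in the analytic category, both of which are guaranteed by the construction of the charts via the analytic inverse function theorem in the proof of Proposition~\ref{proposition:Omega-210630}. So I would keep the proof to one or two sentences: $\E|\Omega$ is the composition of the analytic inclusion $\Omega\hookrightarrow\I^2(S^1,\R^n)$ with the analytic functional $\E$, and compositions of analytic maps are analytic.
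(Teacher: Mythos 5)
Your proposal is correct and matches the paper's (implicit) argument: the paper simply combines Proposition~\ref{energy-analytic-1} with Proposition~\ref{proposition:Omega-210630}, which is exactly your observation that $\E|\Omega$ read in the analytic charts of $\Omega$ is a composition of analytic maps. Your chart-level bookkeeping via $\E\circ\iota\circ\psi^{-1}$ and the Whittlesey composition principle just spells out what the paper leaves as a one-line deduction.
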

Here we give a characterization of the tangent space $T_\gamma \Omega$. 
\begin{corollary} \label{Cor:210702-1}
The tangent space $T_\gamma\Omega$ is equal to $\ker d \Phi_\gamma$ and it consists of all $V\in H^2(S^1,\R^n)$ satisfying
\begin{equation}\label{omegatangent}
\ip{V_s ,T}+\frac{1}{\length(\gamma)}\int^{\length(\gamma)}_{0}\ip{V,\kappa} ds=0. 
\end{equation}
\end{corollary}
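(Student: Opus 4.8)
The plan is to compute $d\Phi_\gamma V$ explicitly in terms of the geometric quantities $T$, $\kappa$ and the arc length element, and then to read off \eqref{omegatangent} from the vanishing of this expression. The identification $T_\gamma\Omega=\ker d\Phi_\gamma$ is immediate from the construction of $\Omega$ as a regular level set in Proposition~\ref{proposition:Omega-210630}: since $\mathbf 0$ is a regular value of the analytic map $\Phi$, the submanifold $\Omega=\Phi^{-1}(0)$ has tangent space at $\gamma$ equal to the kernel of the surjective derivative $d\Phi_\gamma$. So the only real content is to rewrite \eqref{dphi} in arc length form.

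**Key computation.** Starting from \eqref{dphi}, namely
\[
d\Phi_\gamma V=\frac{1}{|\gamma'|}\ip{V',\gamma'}-\int_{S^1}\frac{1}{|\gamma'|}\ip{V',\gamma'}\,du,
\]
I would use $V'/|\gamma'|=V_s$, $\gamma'/|\gamma'|=T$ and $du=ds/|\gamma'|$ appropriately: the first term is $|\gamma'|\ip{V_s,T}$ while the integral is $\int_{S^1}|\gamma'|\ip{V_s,T}\,du=\int_0^{\length(\gamma)}\ip{V_s,T}\,ds$. Thus $d\Phi_\gamma V=0$ is equivalent to
\[
\ip{V_s,T}=\frac{1}{\length(\gamma)}\int_0^{\length(\gamma)}\ip{V_s,T}\,ds
\]
after dividing by $|\gamma'|$ (which is constant on $\Omega$, but in any case positive, so this is harmless). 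It remains to identify the right-hand integral. Here the curve $\gamma$ is closed, so $\int_0^{\length(\gamma)}(\ip{V,T})_s\,ds=0$, and since $(\ip{V,T})_s=\ip{V_s,T}+\ip{V,T_s}=\ip{V_s,T}+\ip{V,\kappa}$ we get $\int_0^{\length(\gamma)}\ip{V_s,T}\,ds=-\int_0^{\length(\gamma)}\ip{V,\kappa}\,ds$. Substituting this in and rearranging gives exactly \eqref{omegatangent}.

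**Main obstacle.** There is essentially no obstacle here — this is a routine unwinding of definitions, and the only point requiring a little care is the change of variables between $du$ and $ds$ in \eqref{dphi}, together with the observation that on $\Omega$ the factor $|\gamma'|$ is a nonzero constant so that dividing through is legitimate (and in fact the identity $d\Phi_\gamma V=0 \iff \eqref{omegatangent}$ holds for all $\gamma\in\I^2(S^1,\R^n)$, not just those in $\Omega$, though we only need it on $\Omega$). One should also note that $\ip{V,\kappa}$ is at worst $L^1$ since $V$ is continuous and $\kappa\in L^2$, so the integral in \eqref{omegatangent} makes sense; this is guaranteed by Lemma~\ref{superlemma}. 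I would present the computation in two or three displayed lines and conclude.
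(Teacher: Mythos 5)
Your proof is correct and follows essentially the same route as the paper: rewrite \eqref{dphi} in arc length form using $|\gamma'|=\length(\gamma)$ on $\Omega$, integrate by parts over the closed curve to trade $\int\ip{V_s,T}\,ds$ for $-\int\ip{V,\kappa}\,ds$, and invoke the regular value theorem for $T_\gamma\Omega=\ker d\Phi_\gamma$. The only inaccuracy is the parenthetical claim that the equivalence $d\Phi_\gamma V=0\iff\eqref{omegatangent}$ holds for all $\gamma\in\I^2(S^1,\R^n)$: for non-constant $|\gamma'|$ dividing through gives $\ip{V_s,T}=\tfrac{1}{|\gamma'(u)|}\int_0^{\length(\gamma)}\ip{V_s,T}\,ds$, which differs from \eqref{omegatangent}; since you only use the statement on $\Omega$, this aside is immaterial to the argument.
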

\begin{proof}
The relation $T_\gamma \Omega=\ker d\Phi_\gamma$ is a consequence of the regular values theorem (see e.g. \cite[Theorem~2.2.2]{Schrader:2016aa}). From \eqref{dphi} we see that 
\[
\ip{V_u,T} - \int_{S^1}\ip{V_u,T}du=0 
\]
for all $V \in T_\gamma \Omega$. Since $|\gamma_u|=\length(\gamma)$ for $\gamma \in \Omega$, multiplication by $1/\length(\gamma)$ and integration by parts gives the desired expression.
\end{proof}

\subsection{The gradient inequality} \label{subsection:LSineq}
\begin{proposition}\label{fredholm}
Let $\gamma$ be a stationary point of $\E|\Omega$, then $d^2(\E|\Omega)_\gamma$ is a Fredholm operator with index zero.
\end{proposition}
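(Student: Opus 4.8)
The plan is to show that $d^2(\E|\Omega)_\gamma$ differs from an invertible operator by a compact perturbation, which is the standard route to Fredholmness with index zero. First I would use the higher regularity of critical points from Proposition~\ref{critsmooth}: since $\gamma$ is stationary for $\E|\Omega$, and $\Omega$ is a submanifold of finite codimension, the Euler--Lagrange equation holds up to a Lagrange-multiplier term, so $\gamma_s,\kappa\in H^1$ and hence the coefficients $T,\kappa,k^2$ appearing in the second variation formula \eqref{d2E} are all in $H^1\subset C^0$. This is what makes the lower-order terms well-behaved.

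Next I would isolate the principal part. Reading off \eqref{d2E}, the top-order term in $d^2\E_\gamma(V,W)$ is $\int_0^{\length}\ip{W_{ss},\,2V_{ss}-2\ip{V_{ss},T}T}\,ds = 2\int_0^\length \ip{W_{ss}^\perp,V_{ss}^\perp}\,ds$ where $\perp$ denotes the component orthogonal to $T$. Together with the $H^2(ds)$ inner product this suggests defining a bounded symmetric bilinear form and, via the Riesz isomorphism for the strong metric $H^2(ds)$, an associated bounded self-adjoint operator $L$ on $T_\gamma\Omega$ with $d^2(\E|\Omega)_\gamma(V,W)=\ip{LV,W}_{H^2(ds)}$. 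I would then write $L = L_0 + K$ where $L_0$ corresponds to the coercive principal form $V,W\mapsto 2\int \ip{V_{ss}^\perp,W_{ss}^\perp}\,ds + (\text{the full }H^2(ds)\text{ norm on the tangential/low-order part})$ and $K$ collects everything else. The key point is that every term in $K$ involves at most $W_s$ (not $W_{ss}$) paired against expressions built from $V,V_s,V_{ss}$ and the $C^0$ coefficients; since the inclusions $H^2(S^1)\hookrightarrow H^1(S^1)$ and $H^2(S^1)\hookrightarrow C^0$ and $H^1(S^1)\hookrightarrow L^2$ are compact (Rellich on $S^1$), each such term defines a compact operator on $T_\gamma\Omega$, so $K$ is compact.

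For $L_0$ I need coercivity on $T_\gamma\Omega$: I would show $\ip{L_0 V,V}_{H^2(ds)}\geq c\norm{V}_{H^2}^2$ for $V\in T_\gamma\Omega$. The subtlety is that $2\int\ip{V_{ss}^\perp,V_{ss}^\perp}\,ds$ controls only the normal part of the second derivative, so I must recover control of $\ip{V_{ss},T}$ using the tangency constraint \eqref{omegatangent} from Corollary~\ref{Cor:210702-1}, which after differentiating along $s$ expresses $\ip{V_{ss},T}$ in terms of $\ip{V_s,\kappa}$ and lower-order data — i.e. $\ip{V_{ss},T}$ is controlled by $\norm{V}_{H^1}$, a compact (lower-order) quantity. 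So modulo adjusting which terms go into $K$, $L_0$ is coercive and hence invertible by Lax--Milgram, and $L=L_0+K$ is Fredholm of index zero. Finally, $d^2(\E|\Omega)_\gamma$ is precisely $L$ restricted to the Hilbert space $T_\gamma\Omega$ (which is closed and complemented in $H^2(S^1,\R^n)$ by Proposition~\ref{proposition:Omega-210630}), so it inherits the Fredholm property with index zero.

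The main obstacle I anticipate is the bookkeeping in the coercivity step: the principal form only sees $V_{ss}^\perp$, and one must carefully use the constraint \eqref{omegatangent} (and its $s$-derivative) to absorb $\ip{V_{ss},T}$ into lower-order, compact terms — while simultaneously checking that the nonlocal integral term in \eqref{omegatangent} does not spoil compactness. A secondary technical point is confirming that the Lagrange-multiplier correction coming from restricting to $\Omega$ (a finite-rank modification) does not affect the Fredholm index, which is immediate since finite-rank operators are compact.
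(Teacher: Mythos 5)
Your proposal is correct and follows essentially the same route as the paper: both decompose the operator associated to $d^2(\E|\Omega)_\gamma$ on the Hilbert space $T_\gamma\Omega$ into an invertible principal part plus a remainder that is compact by the Rellich embeddings, using the differentiated constraint from \eqref{omegatangent} (i.e. $\ip{V_{ss},T}=-\ip{V_s,\kappa}$ on $T_\gamma\Omega$) together with the extra regularity of the critical point to handle the tangential component of $V_{ss}$. The only differences are bookkeeping: the paper substitutes the constraint and the critical-point equation into \eqref{d2E} so that the principal part becomes exactly $\tfrac{2}{\length(\gamma)^3}\ip{\cdot,\cdot}_{H^2}$ and invertibility is immediate from the Riesz map, whereas you keep the form $2\int\ip{V_{ss}^\perp,W_{ss}^\perp}\,ds$ plus lower-order terms and argue coercivity and Lax--Milgram (and note that a few remainder terms do pair $W_{ss}$ against expressions in $V_s$, not only $W_s$ against $V$-terms, but these are still compact since they are bounded by $\norm{V}_{H^1}\norm{W}_{H^2}$ and so factor through a compact embedding).
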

\begin{proof}
Let $\gamma \in \Omega$, and fix $V, W \in T_\gamma\Omega$ arbitrarily. 
Taking a derivative of \eqref{omegatangent}, we have 
\begin{equation}
\label{eq:210702-10}
\ip{V_{ss},T} = - \ip{V_s, \kappa }. 
\end{equation}
Combining Lemma \ref{L-second-var} with \eqref{eq:210702-10} and Corollary \ref{Cor:210702-1}, we reduce the second variation formula to 
\begin{align*}
d^2\E_\gamma (V,W) 
&= \int^{\length(\gamma)}_0 \begin{multlined}[t] \bigl[ \ip{W_{ss},2V_{ss}-6\ip{T,V_s}\kappa}\\
 +\ip{W_s,-6\ip{\kappa,V_{ss}}T+(15k^2-\lambda^2)\ip{V_s,T}T}\\
-(3k^2-\lambda^2)\ip{W_s,V_s} \bigr]\, ds. 
\end{multlined}
\end{align*}
Assuming furthermore that $\gamma $ is a critical point of $\E$ and therefore admits higher derivatives, we can differentiate 
\[
\partial_s(\ip{\kappa,V_s}T)=\ip{\gamma_{s^3},V_s}T+\ip{\kappa,V_{ss}}T+\ip{\kappa,V_s}\kappa
\]
and use this to eliminate the $\ip{\kappa,V_{ss}}T$ term from the expression for $d^2\E$. Then after further simplifications we find
\begin{align*}
d^2\E_\gamma (V,W) 
=\begin{multlined}[t] \int^{\length(\gamma)}_0 \bigl[ 2\ip{W_{ss},V_{ss}}  + \ip{W_s,6\ip{\gamma_{s^3},V_{s}}T+6\ip{T,V_s}\gamma_{s^3}}\\ 
+\ip{W_s,(15k^2-\lambda^2)\ip{V_s,T}T-(3k^2-\lambda^2)V_s} \bigr] \, ds. 
\end{multlined}
\end{align*}
Since $\gamma\in \Omega$ we have
\[ \ip{V,W}_{H^2}=\length^3(\gamma)\ip{V_{ss},W_{ss}}_{L^2(ds)}+\length(\gamma)\ip{V_s,W_s}_{L^2(ds)}+\ip{V,W}_{L^2}\]
 and observe that the second derivative of $\E$ has the form
\[
d^2\E_\gamma( V,W)=\frac{2}{\length(\gamma)^3}\ip{V,W}_{H^2} +\int^{\length(\gamma)}_0 \ip{\tau(V),W}\, ds, 
\]
where $\tau$ is a continuous linear map from $T_\gamma \Omega$ into $L^2(S^1,\R^n)$. 
We define the associated operator $B:T_\gamma\Omega\subset H^2(S^1,\R^n)\to T_\gamma\Omega^*$ by  
\[
B(V)=d^2(\E|\Omega)_\gamma(V,\cdot ). 
\]
Since $T_\gamma\Omega$ is a closed subspace in $H^2(S^1,\R^n)$, we see that $(T_\gamma\Omega, \langle \cdot, \cdot \rangle_{H^2})$ is a Hilbert space. 
Moreover, by the form of $d^2(\E|\Omega)_\gamma$ we observe that 
\[
B(V)=\dfrac{2}{\mathcal{L}(\gamma)^3}I(V) + T(V), 
\]
where $I$ is the Riesz map $I(V)=\ip{V,\cdot }_{H^2}$ and $T(V):=\int^{\mathcal{L(\gamma)}}_0 \ip{V,\tau(\cdot) }\,ds$. 
Employing the Riesz representation theorem in $(T_\gamma\Omega, \langle \cdot, \cdot \rangle_{H^2})$, 
we see that the operator $I$ is an isomorphism, and then it is Fredholm with index zero. 
As for $T$, recalling that $\tau : T_\gamma \Omega \to L^2(S^1, \R^n)$ is bounded, we have  
\[
\| T(V) \|_{(T_\gamma \Omega)^*} 
= \sup_{W \in T_\gamma \Omega, \, \| W \|_{H^2}=1} \Bigl| \int^{\mathcal{L(\gamma)}}_0 \ip{V,\tau(W) }\,ds \Bigr| 
\le C \| V \|_{L^2}
\]
for all $V \in T_\gamma \Omega$. 
Now it follows from the compactness of the imbedding $H^2 \subset L^2$ that a bounded sequence $(V_i) \subset T_\gamma \Omega$ has a subsequence converging in $L^2(S^1, \R^n)$, and then $(T V_i)$ has a convergent subsequence 
because the space $((T_\gamma \Omega)^*, \|\cdot\|_{(T_\gamma \Omega)^*})$ is a Banach space. 
Thus $T : T_\gamma \Omega \to (T_\gamma \Omega)^*$ is compact.
Then since $B$ is the sum of a Fredholm index zero operator and a compact operator, it is also Fredholm with index zero (e.g. \cite[Example~ 8.16]{Zeidler:1986aa}).	
\end{proof}

\begin{proposition}{\rm (\L ojasiewicz--Simon gradient inequality on $\Omega$)}.\label{LS1}
Let $ \varsigma \in\Omega $ be a stationary point of $\E$. There are constants $Z\in (0,\infty), \delta\in (0,1]$ and $\theta\in \left [\tfrac{1}{2},1\right )$ such that 
if $\alpha \in \Omega$ with $\norm{\alpha-\varsigma}_{H^2}<\delta$ then 
\[ 
\| d(\E|\Omega)_\alpha\|_{T_\alpha \Omega^*} \geq Z|\E(\alpha)-\E(\varsigma)|^\theta.
\]
\end{proposition}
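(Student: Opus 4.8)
The plan is to reduce the statement to the abstract \L ojasiewicz--Simon gradient inequality on a Hilbert space, as in \cite[Chapter~7]{Feehan:2016aa}, by passing to an analytic chart for $\Omega$ centred at $\varsigma$. By Proposition~\ref{proposition:Omega-210630} the set $\Omega$ is an analytic submanifold of $H^2(S^1,\R^n)$ whose model space at $\varsigma$ is the closed subspace $T_\varsigma\Omega=\ker d\Phi_\varsigma$; moreover the chart construction there (applying Whittlesey's analytic inverse function theorem to $\Phi$, following \cite[Proposition~2.3]{Lang:1999sf}) produces an analytic diffeomorphism $\varphi\colon O\to\Omega$ from an open neighbourhood $O$ of $0$ in the Hilbert space $(T_\varsigma\Omega,\ip{\cdot,\cdot}_{H^2})$ onto a neighbourhood of $\varsigma$ in $\Omega$, with $\varphi(0)=\varsigma$ and $d\varphi_0$ equal to the inclusion $T_\varsigma\Omega\hookrightarrow H^2(S^1,\R^n)$. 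The first step is then to set $g:=\E\circ\varphi\colon O\to\R$, which is analytic by Corollary~\ref{restrictionanalytic} together with stability of analyticity under composition (\cite[p.~1079]{Whittlesey:1965aa}).

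Next I would verify the hypotheses of the abstract theorem at the origin. Since $\varsigma$ is a stationary point of $\E$ we have $d\E_\varsigma=0$ on all of $H^2(S^1,\R^n)$, so $dg_0=d\E_\varsigma\circ d\varphi_0=0$; in particular $\varsigma$ is a critical point of $\E|\Omega$, and hence (being a critical point of $\E$) enjoys the higher regularity of Proposition~\ref{critsmooth}, so that Proposition~\ref{fredholm} applies. Because $dg_0=0$, the second-order chain rule gives $d^2g_0(u,v)=d^2\E_\varsigma(d\varphi_0 u,d\varphi_0 v)=d^2(\E|\Omega)_\varsigma(u,v)$ for $u,v\in T_\varsigma\Omega$, which by Proposition~\ref{fredholm} is a Fredholm operator of index zero from $T_\varsigma\Omega$ into $T_\varsigma\Omega^*$. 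Thus $g$ is an analytic function on an open subset of a Hilbert space, with a critical point at $0$ whose Hessian is Fredholm of index zero (the special case of a trivial Gelfand triple, where the relevant gradient is the Riesz dual of the derivative). Invoking \cite[Chapter~7]{Feehan:2016aa} yields constants $Z_0>0$, $\sigma\in(0,1]$ and $\theta\in[\tfrac12,1)$ with
\[
\norm{dg_x}_{T_\varsigma\Omega^*}\ \ge\ Z_0\,\abs{g(x)-\E(\varsigma)}^{\theta}\qquad\text{whenever }\norm{x}_{H^2}<\sigma .
\]

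Finally I would transport this back to $\Omega$. For $\alpha$ near $\varsigma$, write $x:=\varphi^{-1}(\alpha)$, so $\E(\alpha)=g(x)$ and $dg_x=d(\E|\Omega)_\alpha\circ d\varphi_x$, whence $\norm{dg_x}_{T_\varsigma\Omega^*}\le\norm{d\varphi_x}_{\op}\norm{d(\E|\Omega)_\alpha}_{T_\alpha\Omega^*}$. As $\varphi$ is $C^1$ there is $M>0$ with $\norm{d\varphi_x}_{\op}\le M$ for $x$ in a neighbourhood of $0$, and as $\varphi^{-1}$ is continuous we may choose $\delta\in(0,1]$ so small that $\norm{\alpha-\varsigma}_{H^2}<\delta$ forces $x\in O$ with $\norm{x}_{H^2}<\sigma$. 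Combining these with the displayed inequality gives $\norm{d(\E|\Omega)_\alpha}_{T_\alpha\Omega^*}\ge Z\abs{\E(\alpha)-\E(\varsigma)}^{\theta}$ with $Z:=Z_0/M$, which is the claim. I expect the main obstacle in this argument to be not analytic but bookkeeping: identifying $d^2g_0$ with the \emph{constrained} Hessian $d^2(\E|\Omega)_\varsigma$ so that Proposition~\ref{fredholm} may be applied, and then keeping track of base points and of the several locally equivalent norms involved (the plain $H^2$ norm on $T_\varsigma\Omega$ used in Proposition~\ref{fredholm} versus the $H^2(ds)$ norms at $\varsigma$ and at $\alpha$ that are natural for the flow), noting that any such choice only alters the constant $Z$. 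The genuinely hard work, namely the analyticity of $\E$ (Proposition~\ref{energy-analytic-1}) and the Fredholm property of the restricted Hessian (Proposition~\ref{fredholm}) — the latter being exactly what compels the passage to the infinite-codimension cross-section $\Omega$ in the first place — has already been carried out, so here the abstract machinery applies essentially mechanically.
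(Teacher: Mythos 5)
Your argument is essentially the paper's own proof: pass to an analytic chart for $\Omega$ at $\varsigma$, use Corollary~\ref{restrictionanalytic} and Proposition~\ref{fredholm} to verify analyticity and the index-zero Fredholm Hessian in the chart, invoke the abstract \L ojasiewicz--Simon inequality there, and transport it back using boundedness of the chart differential and continuity to fix $\delta$. The only differences are cosmetic (you work with the parametrisation $\varphi=\phi^{-1}$ rather than the chart $\phi$, and cite Feehan's 2016 notes where the paper cites \cite[Theorem~1]{Feehan:2020aa}), so the proposal is correct as it stands.
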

\begin{proof}
Let $\phi:U\subset \Omega \to \phi(U)\subset B$ be a local chart for $\Omega$ with $\varsigma\in U$, 
where $B$ is a subspace of $H^2(S^1,\R^n)$, and choose $\delta$ such that $\alpha\in \Omega$ is also in $U$ when $\|\alpha-\varsigma\|_{H^2}<\delta$. 
Define $E:=\E\circ \phi^{-1}:\phi(U)\to \R$, then $dE=d\E\circ d\phi^{-1}$, and since $\varsigma$ is stationary we have 
\[
d^2 E_{\phi(\varsigma)} = d^2 \E_{\varsigma}(d\phi^{-1}_{\phi(\varsigma)}\cdot, d\phi^{-1}_{\phi(\varsigma)}\cdot).
\] 
Moreover, $\phi$ is analytic and $d\phi_\varsigma:T_\varsigma\Omega\to B$ is an isomorphism so it follows from Corollary~\ref{restrictionanalytic} and Proposition~\ref{fredholm} that $E$ is analytic and $d^2 E_{\phi(\varsigma)}$ is Fredholm. 
Therefore by \cite[Theorem 1]{Feehan:2020aa} there exist constants $\tilde Z\in (0,\infty),\tilde{\delta}\in (0,1], \theta\in \left [\tfrac{1}{2},1\right )$ such that 
if $\|\phi(\alpha)-\phi(\varsigma)\|_B<\tilde \delta$ then
\[ 
\|dE_{\phi(\alpha)}\|_{B^*} \geq \tilde Z \left | E(\phi(\alpha))-E(\phi(\varsigma))\right |^\theta =\tilde Z \left | \E(\alpha)-\E(\varsigma)\right |^\theta.  
\]
Since $d\phi^{-1}$ is continuous we have that there is a constant $c$ such that for any $\alpha\in \Omega$ with $\norm{\alpha-\varsigma}_{H^2}<\delta$ and any $V\in T_\alpha \Omega$ 
\[ 
\|V\|_{H^2} \leq c \|d\phi_\alpha (V)\|_B 
\]
and therefore 
\[
\|dE_{\phi(\alpha)}\|_{B^*}=\sup_{d\phi_\alpha (V)\in B}\frac{|dE_{\phi(\alpha)}d\phi_\alpha (V)|}{\|d\phi_\alpha (V)\|_B} 
\leq \sup_{V\in T_\alpha\Omega}\frac{|d\E_\alpha V|}{\frac{1}{c}\|V\|_{H^2}}
= c\|d(\E|\Omega)_\alpha\|_{{T_\alpha \Omega}^*}. 
\]
The existence of a $\delta$ such that if $\|\alpha-\varsigma\|_{H^2}<\delta$ then $\|\phi(\alpha)-\phi(\varsigma)\|_B<\tilde \delta$ follows from the continuity of $\phi$ and the fact that $\Omega$ is a submanifold of $H^2(S^1,\R^n)$.  
\end{proof}

\begin{lemma}\label{Pconts} 
The projection $P:\I^2(S^1,\R^n) \to \Omega$ which takes each $\gamma$ to its arc length proportional reparametrisation is continuous with respect to $H^2$ at any $\sigma \in \I^2(S^1,\R^n)$ which is stationary for $\E$.  
\end{lemma}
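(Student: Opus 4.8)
The plan is to write $P$ explicitly as a composition of three continuous maps and then follow continuity through the composition at $\sigma$. For $\gamma\in\I^2(S^1,\R^n)$ set
\[
\phi_\gamma(u):=\frac{1}{\length(\gamma)}\int_0^u\abs{\gamma'(\xi)}\,d\xi .
\]
By Lemma~\ref{superlemma}(i), for $\gamma$ in a small fixed $H^2$-ball about $\sigma$ we have $\phi_\gamma(0)=0$, $\phi_\gamma(1)=1$, and $\phi_\gamma'=\abs{\gamma'}/\length(\gamma)$ bounded above and below by positive constants; and $\phi_\gamma\in H^2(S^1,\R)$ since $\abs{\gamma'}\in H^1$ by Lemma~\ref{speed}. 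So $\phi_\gamma$ is an orientation-preserving $H^2$-diffeomorphism of $S^1$, and a direct computation gives $\abs{(\gamma\circ\phi_\gamma^{-1})'}\equiv\length(\gamma)$; hence $P\gamma=\gamma\circ\phi_\gamma^{-1}\in\Omega$ (in particular $P\gamma\in H^2(S^1,\R^n)$). The lemma thus reduces to continuity at $\sigma$ of the chain $\gamma\mapsto\phi_\gamma\mapsto\phi_\gamma^{-1}\mapsto\gamma\circ\phi_\gamma^{-1}$.

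For the first arrow, $\gamma\mapsto\abs{\gamma'}$ is continuous $\I^2(S^1,\R^n)\to H^1(S^1,\R)$ by Lemma~\ref{speed} and $\length$ is continuous, so $\gamma\mapsto\phi_\gamma'$ is continuous into $H^1$, and integrating (using $\phi_\gamma(0)=0$) gives continuity of $\gamma\mapsto\phi_\gamma$ into $H^2$. For the second arrow I would use the standard fact that on $H^2$-diffeomorphisms of $S^1$ with derivative bounded below by a fixed $c>0$, inversion $\psi\mapsto\psi^{-1}$ is continuous into $H^2$: one propagates $H^2$-convergence of the $\psi$'s through $(\psi^{-1})'=1/(\psi'\circ\psi^{-1})$ and $(\psi^{-1})''=-(\psi''\circ\psi^{-1})\bigl((\psi^{-1})'\bigr)^{3}$, using the change-of-variables bound $\ltwo{g\circ\psi}\le(\inf\psi')^{-1/2}\ltwo{g}$ together with the continuity, for each fixed $g\in L^2$, of $\psi\mapsto g\circ\psi$ into $L^2$ (proved by approximating $g$ in $L^2$ by continuous functions and using uniform convergence of the $\psi$'s). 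By Lemma~\ref{superlemma}(i), $\inf\phi_\gamma'$, $\sup\phi_\gamma'$ and $\htwo{\phi_\gamma}$ — hence the same for $\phi_\gamma^{-1}$ — are bounded uniformly over this ball.

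For the third arrow, take $\gamma_n\to\sigma$ in $H^2$, put $\psi_n:=\phi_{\gamma_n}^{-1}$ and $\psi:=\phi_\sigma^{-1}$, so $\psi_n\to\psi$ in $H^2$ with $\inf\psi_n'$, $\sup\psi_n'$, $\ltwo{\psi_n''}$ uniformly bounded, and split
\[
\htwo{\gamma_n\circ\psi_n-\sigma\circ\psi}\le\htwo{(\gamma_n-\sigma)\circ\psi_n}+\htwo{\sigma\circ\psi_n-\sigma\circ\psi}.
\]
Expanding second derivatives and changing variables shows the right-translation $f\mapsto f\circ\psi_n$ has $H^2\to H^2$ operator norm controlled by $\inf\psi_n'$, $\sup\psi_n'$, $\ltwo{\psi_n''}$, hence uniformly bounded, so the first term is $\le C\htwo{\gamma_n-\sigma}\to0$. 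For the second term I would expand $(\sigma\circ\psi_n)''=(\sigma''\circ\psi_n)(\psi_n')^2+(\sigma'\circ\psi_n)\psi_n''$ and subtract the identity for $\psi$: the pieces controlled by $\psi_n'\to\psi'$ in $C^0$, $\psi_n''\to\psi''$ in $L^2$, and $\sigma'\circ\psi_n\to\sigma'\circ\psi$ in $C^0$ (since $\sigma'\in H^1\subset C^0$) are routine, as are the lower-order terms. Composing the three arrows then gives continuity of $P$ at $\sigma$.

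The one nontrivial point — and the main obstacle — is the remaining contribution to the second term, the factor $(\sigma''\circ\psi_n-\sigma''\circ\psi)(\psi')^2$: since $\sigma''$ is in general only in $L^2$, one cannot estimate $\ltwo{\sigma''\circ\psi_n-\sigma''\circ\psi}$ by a Lipschitz bound and must argue, exactly as in the second arrow, from density of $C^0$ in $L^2$ and $\psi_n\to\psi$ in $C^1$. (The stationarity of $\sigma$ is not actually needed above; one could invoke Proposition~\ref{critsmooth} to improve the intrinsic regularity of $\sigma$, but since a stationary point of $\E$ on $\I^2(S^1,\R^n)$ need not be parametrised proportionally to arc length, this alone does not make $\sigma$ smoother in its given parametrisation.)
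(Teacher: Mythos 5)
Your proof is correct, but it takes a genuinely different route from the paper's. The paper estimates $\norm{P(\gamma)-P(\sigma)}_{H^2}$ directly from the explicit formulas $P(\gamma)'=\length(\gamma)\,T_\gamma\circ\omega_\gamma^{-1}$ and $P(\gamma)''=\length(\gamma)^2\kappa_\gamma\circ\omega_\gamma^{-1}$: it first bounds the parameter discrepancy $\abs{u-\omega_\gamma^{-1}\circ\omega_\sigma(u)}\le C\norm{\gamma'-\sigma'}_{L^1}$, then uses the local Lipschitz continuity of $\gamma\mapsto T_\gamma,\kappa_\gamma$ from Lemma \ref{superlemma}, and controls the troublesome composition term $\kappa_\sigma\circ\omega_\sigma^{-1}-\kappa_\sigma\circ\omega_\gamma^{-1}$ by the fundamental theorem of calculus, which requires $\ltwo{\kappa_\sigma'}<\infty$ --- this is precisely where stationarity of $\sigma$ enters, through the extra regularity supplied by Proposition \ref{critsmooth}. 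You instead factor $P$ through $\gamma\mapsto\phi_\gamma\mapsto\phi_\gamma^{-1}\mapsto\gamma\circ\phi_\gamma^{-1}$ and dispose of the analogous obstruction (the fixed $L^2$ function $\sigma''$ composed with uniformly convergent reparametrisations) by density of continuous functions in $L^2$ combined with uniform change-of-variables bounds. The trade-off: your argument needs no regularity of $\sigma$ beyond $H^2$, so it actually proves continuity of $P$ at \emph{every} point of $\I^2(S^1,\R^n)$, not only at stationary ones --- and your closing remark is apt, since stationarity improves only the arc-length quantities $T_\sigma,\kappa_\sigma$ (which is what the paper exploits) and not $\sigma''$ itself, so it would not even help in your decomposition. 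On the other hand your proof is purely qualitative, whereas the paper's yields explicit Lipschitz-type estimates in terms of $\norm{\gamma-\sigma}_{H^2}$, and it is these estimates (not just bare continuity) that are invoked again later, e.g. in the proof of Theorem \ref{theorem:5.5}.
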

\begin{proof}
Write $w_\gamma(u)=\frac{1}{\length(\gamma)}\int_0^u |\gamma'(\tau)|\,d\tau$ so $\alpha(w):=P(\gamma)(w)=\gamma \circ w_\gamma^{-1}(w)=\gamma(u)$ and 
\begin{equation*}
\begin{aligned}
\alpha'(w)&=\length(\gamma)\frac{\gamma'(u)}{|\gamma'(u)|}=:\length(\gamma)T_\gamma (u), \\
\alpha''(w)&= \gamma''(u)\frac{\length(\gamma)^2}{|\gamma'(u)|^2}-\gamma'(u)\frac{\length(\gamma)^2}{|\gamma'(u)|^4}\ip{\gamma''(u),\gamma'(u)} =\length(\gamma)^2\kappa_\gamma(u).
\end{aligned}
\end{equation*}
Let $\sigma\in \I^2(S^1,\R^n)$ be a stationary point of $\E$ and $\gamma \in \I^2(S^1,\R^n)$ such that $\norm{\gamma-\sigma}_{H^2}<b/C_S$, where $b$ and $C_s$ are as in Lemma \ref{superlemma}. Then we have constants $c_1,c_2,c_3$ depending only on $\sigma$ such that Lemma \ref{superlemma}(i)-(iii) hold. 
Using
\begin{equation} \label{eq:derivative-w-invers}
\dfrac{d w^{-1}_\gamma}{dw}=\dfrac{\mathcal{{L}(\gamma)}}{|\gamma'(w)|}, 
\end{equation}
we obtain the following estimate on parameters
\begin{equation}
\label{eq:param}
\begin{aligned}
|u-\omega_\gamma^{-1}\circ \omega_\sigma(u)|
&=|\omega_\gamma^{-1}\circ \omega_\gamma(u)-\omega_\gamma^{-1}\circ \omega_\sigma(u)| \\
&=\Bigl| \int_{\omega_\sigma(u)}^{\omega_\gamma(u)} (\omega_\gamma^{-1})'(\tau)\, d\tau \Bigr|\\ 
&\leq \length(\gamma) \| |\gamma'|^{-1} \|_{L^\infty}  \int_0^u \left | \frac{|\gamma'(\tau)|}{\length(\gamma)} - \frac{|\sigma'(\tau)|}{\length(\sigma)} \right| d\tau  \\
&\leq \frac{1}{\length(\sigma)}\| |\gamma'|^{-1} \|_{L^\infty}\left ( \norm{\gamma'}_{L^\infty}+\length(\gamma) \right )\| \gamma'-\sigma' \|_{L^1}\\
&\leq \frac{2c_2}{c_1^2}\| \gamma'-\sigma' \|_{L^1}
\end{aligned}
\end{equation}
where we have also used $|\length(\gamma)-\length(\sigma)|\leq \norm{\gamma-\sigma}_{L^1}$.
Then 
\begin{align*}
\| P(\sigma)-P(\gamma) \|^2_{L^2}
&= \int_0^1 | \sigma\circ \omega_{\sigma}^{-1}(w)-\gamma\circ\omega_{\gamma}^{-1}(w)|^2\, dw \\
&\leq \begin{multlined}[t] 2 \int_0^1 |\sigma(u)-\sigma\circ \omega_{\gamma}^{-1}\circ \omega_{\sigma}(u)|^2 \frac{|\sigma'(u)|}{\length(\sigma)}\, du\\ + 2 \int_0^1 |\sigma\circ \omega_{\gamma}^{-1}(w) -\gamma\circ \omega_{\gamma}^{-1}(w)|^2\, dw	
 \end{multlined}
\\
&\leq \frac{2 \|\sigma'\|_{L^\infty}^3}{\length(\sigma)}\int_0^1 |u-\omega_{\gamma}^{-1}\circ \omega_{\sigma}(u)|^2\,du  +\frac{2 \|\gamma'\|_{L^\infty}}{\length(\gamma)}\| \sigma-\gamma \|_{L^2}^2\\
&\leq 8 c^5\| \gamma'-\sigma' \|_{L^1}^2
+ 2c\|\sigma-\gamma\|_{L^2}^2, 
\end{align*} 
where $c:=c_2/c_1$.
For the difference of first derivatives we have 
\begin{align*}
\| P(\sigma)'-P(\gamma)'\|_{L^2}^2
&= \int_0^1 |\length(\sigma)T_\sigma \circ \omega_\sigma^{-1}(w)- \length(\gamma) T_\gamma \circ \omega_\gamma^{-1} (w)|^2 \, dw \\
&\leq 2 \length(\sigma)^2\int_0^1 |T_\sigma\circ \omega_\sigma^{-1}(w)-T_\sigma\circ \omega_\gamma^{-1}(w)|^2\, dw \\ 
& \qquad + 2 \int_0^1 |\length(\sigma)T_\sigma\circ \omega_\gamma^{-1}(w)-\length(\gamma)T_\gamma \circ \omega_\gamma^{-1}(w) |^2\, dw. 
\end{align*}
For the first term on the right hand side of the inequality, using a change of variable, the fundamental theorem of calculus and \eqref{eq:param} we have
\begin{align*}
\length(\sigma)^2\int_0^1 &|T_\sigma\circ \omega_\sigma^{-1}(w)-T_\sigma\circ \omega_\gamma^{-1}(w)|^2\, dw \\
&\leq \length(\sigma)\|\sigma'\|_{L^\infty} \|T_\sigma'\|^2_{L^2} \int_0^1 |u-\omega_\gamma^{-1}\circ \omega_\sigma(u)|^2 \, du \\
&\leq 4c^4 c^2_2 c_3^2 \|\gamma'-\sigma'\|_{L^2}^2, 
\end{align*}
where we have used $T_\sigma'=\abs{\sigma'}\kappa_\sigma$.
For the second term, recalling \eqref{eq:Tlip}
\begin{align*}
& \int_0^1 |\length(\sigma)T_\sigma\circ \omega_\gamma^{-1}(w)-\length(\gamma)T_\gamma \circ \omega_\gamma^{-1}(w) |^2\, dw	\\
& \qquad \leq \frac{2 \|\gamma'\|_{L^\infty}}{\length(\gamma)} \int_0^1 \bigl[ |\length(\sigma)-\length(\gamma)|^2 + \length(\gamma)^2 |T_\sigma(u)-T_\gamma(u)|^2 \bigr] \, du\\
& \qquad \leq 2(c+2c^3) \|\sigma'-\gamma'\|_{L^2}^2. 
\end{align*}
Similarly for the difference of second derivatives
\begin{align*}
\|P(\sigma)''-P(\gamma)''\|_{L^2}^2
&=\int_0^1 |\length(\sigma)^2\kappa_\sigma\circ \omega_\sigma^{-1}(w)-\length(\gamma)^2 \kappa_\gamma\circ \omega_\gamma^{-1}(w)|^2 \, dw \\
&\leq \begin{multlined}[t] 2 \length(\sigma)^3 \|\sigma'\|_{L^\infty} \|\kappa_\sigma'\|_{L^2}^2 \int_0^1 |u-\omega_\gamma^{-1}\circ \omega_\sigma(u)|^2 \, du \\
 +4\frac{\|\gamma'\|_{L^\infty}}{\length(\gamma)} \|\kappa_\sigma\|_{L^2}^2 (\length(\sigma)+\length(\gamma))^2 \|\sigma'-\gamma'\|_{L^2}^2 \\
         +4\|\gamma'\|_{L^\infty} \length(\gamma)^3 \|\kappa_\sigma-\kappa_\gamma\|^2_{L^2} 
\end{multlined}\\
&\leq \begin{multlined}[t] 4cc_2^2 \left (\norm{\kappa_\sigma '}_{L^2}^2+2c_3^2 \right )\norm{\sigma'-\gamma'}_{L^2}^2 
+4c_2^4\norm{\kappa_\sigma-\kappa_\gamma}_{L^2}^2.
\end{multlined}
\end{align*}
We have assumed $\sigma$ is stationary so that, by the proof of Lemma \ref{critsmooth}, $\ltwo{\kappa_\sigma'}$ is bounded. 
Finally, we recall that by Lemma \ref{superlemma} (iii) $\kappa$ is Lipschitz, and this completes the proof.
\end{proof}


\begin{theorem}{\rm (Gradient inequality on $\I^2(S^1,\R^n)$)}\label{LS2}
Let $\sigma \in \I^2(S^1,\R^n)$ be a stationary point of $\E$. 
Then there are constants $Z\in (0,\infty), \delta \in (0,1]$ and $\theta\in \left [\tfrac{1}{2},1\right )$ such that if $\gamma \in \I^2(S^1,\R^n)$ with $\|\gamma-\sigma\|_{H^2}<\delta$ then 
\[ 
\|\grad \E_\gamma\|_{H^2(ds), \gamma} \geq Z |\E(\gamma)-\E(\sigma)|^\theta.
\] 
\end{theorem}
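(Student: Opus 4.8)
The plan is to transfer the \L ojasiewicz--Simon inequality from $\Omega$ (Proposition~\ref{LS1}) to all of $\I^2(S^1,\R^n)$ by composing with the arc length proportional reparametrisation map $P$ and invoking the reparametrisation invariance of $\E$ and of the $H^2(ds)$-gradient norm.

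First I would set $\varsigma:=P(\sigma)\in\Omega$. Since $\varsigma$ is a reparametrisation of $\sigma$ and the reparametrisation action is by linear isomorphisms with $d\E_{\Phi\gamma}\circ\Phi=d\E_\gamma$ (Section~\ref{section:formulation}), $\varsigma$ is again a stationary point of $\E$ and $\E(\varsigma)=\E(\sigma)$. Hence Proposition~\ref{LS1} applies at $\varsigma$ and supplies constants $Z_1\in(0,\infty)$, $\delta_1\in(0,1]$ and $\theta\in[\tfrac12,1)$ such that $\|d(\E|\Omega)_\alpha\|_{T_\alpha\Omega^*}\ge Z_1|\E(\alpha)-\E(\varsigma)|^\theta$ whenever $\alpha\in\Omega$ and $\|\alpha-\varsigma\|_{H^2}<\delta_1$.

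Next, given $\gamma\in\I^2(S^1,\R^n)$ with $\|\gamma-\sigma\|_{H^2}<\delta$ (with $\delta\le 1$ still to be fixed), set $\alpha:=P(\gamma)\in\Omega$. By Lemma~\ref{Pconts} the map $P$ is $H^2$-continuous at $\sigma$, so after shrinking $\delta$ we may ensure $\|\alpha-\varsigma\|_{H^2}=\|P(\gamma)-P(\sigma)\|_{H^2}<\delta_1$; shrinking $\delta$ once more we keep $\alpha$ inside a fixed $H^2$-ball about $\varsigma$ on which, by Lemma~\ref{superlemma}(i) and the norm equivalence computed in Section~\ref{section:formulation}, the constant in $\|v\|_{H^2(ds),\alpha}\le c_2\|v\|_{H^2}$ can be taken independent of $\alpha$. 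Since $\alpha$ is obtained from $\gamma$ by the $H^2$-regular reparametrisation $w_\gamma^{-1}$, the invariance arguments of Section~\ref{section:formulation} give both $\E(\alpha)=\E(\gamma)$ and, as in \eqref{gradinvariance}, $\|\grad\E_\gamma\|_{H^2(ds),\gamma}=\|\grad\E_\alpha\|_{H^2(ds),\alpha}$.

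Finally, using that $\grad\E_\alpha$ is the $H^2(ds)$-Riesz representative of $d\E_\alpha$ and that $d\E_\alpha$ restricts to $d(\E|\Omega)_\alpha$ on $T_\alpha\Omega$, I would estimate
\[
\|\grad\E_\alpha\|_{H^2(ds),\alpha}=\sup_{0\ne v\in H^2(S^1,\R^n)}\frac{|d\E_\alpha v|}{\|v\|_{H^2(ds),\alpha}}\ \ge\ \sup_{0\ne v\in T_\alpha\Omega}\frac{|d(\E|\Omega)_\alpha v|}{c_2\|v\|_{H^2}}=\frac{1}{c_2}\|d(\E|\Omega)_\alpha\|_{T_\alpha\Omega^*},
\]
and then combining this chain with Proposition~\ref{LS1} (applied to $\alpha$) and the identities $\E(\alpha)=\E(\gamma)$, $\E(\varsigma)=\E(\sigma)$ yields $\|\grad\E_\gamma\|_{H^2(ds),\gamma}\ge (Z_1/c_2)|\E(\gamma)-\E(\sigma)|^\theta$, i.e.\ the theorem with $Z:=Z_1/c_2$. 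The one genuinely delicate point is the uniformity in the third paragraph: a priori a small $H^2$-perturbation of $\gamma$ could send $\alpha=P(\gamma)$ far from $\varsigma$, so that neither Proposition~\ref{LS1} nor a uniform $c_2$ would be available — ruling this out is precisely the role of Lemma~\ref{Pconts}, whose proof in turn relies on the extra regularity enjoyed by the stationary curve $\sigma$.
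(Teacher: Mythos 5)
Your proposal is correct and follows essentially the same route as the paper: apply Proposition \ref{LS1} at $\alpha=P(\gamma)$, $\varsigma=P(\sigma)$, use Lemma \ref{Pconts} (and the stationarity of $\sigma$) to keep $\alpha$ close to $\varsigma$, and then compare the dual norm of $d(\E|\Omega)_\alpha$ with $\|\grad\E_\gamma\|_{H^2(ds),\gamma}$ via reparametrisation invariance and uniform length bounds. The only cosmetic difference is in the last step, where you push the gradient to $\alpha$ using \eqref{gradinvariance} and the uniform equivalence $\|v\|_{H^2(ds),\alpha}\le c_2\|v\|_{H^2}$ on $\Omega$, whereas the paper pulls the test function back to $\gamma$ and computes $\|V\circ\omega_\gamma\|_{H^2(ds),\gamma}$ explicitly in \eqref{Vcirc}; the two computations are equivalent.
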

\begin{proof}
Let $\alpha,\varsigma\in \Omega$ be the respective arc length proportional reparametrisations of $\gamma,\sigma$. 
Then since $\E$ and $d\E$ are parametrisation invariant and $\|d(\E|\Omega)_\alpha\|_{T_\gamma\Omega^*}\leq \|d\E_\alpha\|_{{H^2}^*}$ we have by Proposition \ref{LS1}
\[ 
\| d\E_\alpha \|_{{H^2}^*} \geq \| d(\E|\Omega)_\alpha \|_{T_\gamma\Omega^*} \geq Z|\E(\alpha)-\E(\varsigma)|^\theta =Z|\E(\gamma)-\E(\sigma)|^\theta 
\]
provided $\|\alpha-\varsigma\|_{H^2}$ is sufficiently small,  which can be arranged according to Lemma~\ref{Pconts} because $\sigma$ is stationary.  
Since reparametrisation is a linear map on $H^2$ we have  $d\E_\alpha(V) =d\E_\gamma (V \circ \omega_\gamma)$ and then
\begin{equation}\label{supnormalpha}
\begin{aligned}
\|d\E_\alpha\|_{{H^2}^*}
&=\sup_{\|V\|_{H^2}=1} |d\E_\gamma (V\circ \omega_\gamma)| \\
&= \sup_{\|V\|_{H^2}=1} \ip{\grad \E_\gamma,V\circ \omega_\gamma}_{H^2(ds),\gamma}. 
\end{aligned}
\end{equation}
From \eqref{eq:derivative-w-invers} 
we calculate
\begin{equation}\label{Vcirc}
\| V\circ \omega_\gamma \|^2_{H^2(ds),\gamma} = \length(\gamma) \|V\|_{L^2}^2 + \dfrac{1}{\length(\gamma)}\|V'\|_{L^2}^2 + \frac{1}{\length(\gamma)^3} \|V''\|_{L^2}^2.  
\end{equation} 
By Lemma \ref{superlemma} we have upper and lower bounds for $\length(\gamma)$ and therefore a constant $c_1$ such that 
\[ 
\|d\E_\alpha\|_{{H^2}^*} \leq c_1 \|\grad \E_\gamma \|_{H^2(ds)}. 
\]
\end{proof}

\section{Convergence} \label{subsection:full-limit-convergence}
Since the \L ojasiewicz--Simon gradient inequality proved above only holds in an $H^2$-neighbourhood of a critical point we will need subconvergence in $H^2$ of minimizing sequences in order to use it. 
To this end we will prove a Palais--Smale type condition for $\E|\Omega$ by adapting the method used in \cite{Schrader:2016ab}.  

First let us define an auxiliary functional $J:\I^2(S^1,\R^n)\to \R$ by
\begin{equation}\label{J}
	J(\gamma):=\frac{1}{\length(\gamma)^3}\int_0^1\abs{\gamma''}^2\, du+\lambda^2 \length(\gamma)\, .
\end{equation}
This function has the property $J|\Omega=\E|\Omega$, and also the following.

\begin{lemma}\label{loccoer}
$J$ is locally coercive modulo $C^1$ in the following sense. 
Let $U$ be an open neighbourhood in $\I^2(S^1,\R^n)$ for which there are constants $c_1,c_2$  such that for all $\gamma \in U$: $\|\gamma\|_{H^2}<c_1$ and $0<c_2<\abs{\gamma'(u)}$. Then there exist positive constants $c_3$ and $c_4$ depending on $U$ such that for any $V\in H^2(S^1,\R^n)$ 
\begin{align}\label{coercive}
d^2J_\gamma(V,V) \geq c_3 \|V\|_{H^2}^2 - c_4 \|V\|_{C^1}^2. 
\end{align}
\end{lemma}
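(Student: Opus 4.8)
The plan is to compute the second variation $d^2J_\gamma(V,V)$ explicitly, isolate the single term that is a positive multiple of $\|V''\|_{L^2}^2$, and show that all the remaining terms are either controlled by $\|V\|_{C^1}^2$ or can be absorbed into that leading term via Young's inequality. Throughout write $L:=\length(\gamma)$ and $Q(\gamma):=\int_0^1\abs{\gamma''}^2\,du$, so that $J=L^{-3}Q+\lambda^2 L$. The building blocks are $dL_\gamma V=\int_0^1\langle\gamma',V'\rangle/\abs{\gamma'}\,du$, $dQ_\gamma V=2\int_0^1\langle\gamma'',V''\rangle\,du$, together with $d^2L_\gamma(V,V)=\int_0^1\big(\abs{V'}^2/\abs{\gamma'}-\langle\gamma',V'\rangle^2/\abs{\gamma'}^3\big)\,du\ge 0$ (nonnegative by Cauchy--Schwarz) and $d^2Q_\gamma(V,V)=2\|V''\|_{L^2}^2$.

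Differentiating $dJ_\gamma V=-3L^{-4}(dL_\gamma V)Q+L^{-3}dQ_\gamma V+\lambda^2 dL_\gamma V$ once more and setting the two directions equal gives
\begin{align*}
d^2J_\gamma(V,V) &= 12L^{-5}Q(dL_\gamma V)^2 - 6L^{-4}(dL_\gamma V)(dQ_\gamma V) - 3L^{-4}Q\,d^2L_\gamma(V,V) \\
&\quad + \lambda^2 d^2L_\gamma(V,V) + \frac{2}{L^3}\|V''\|_{L^2}^2.
\end{align*}
Now I would estimate term by term using the hypotheses on $U$. From $0<c_2<\abs{\gamma'}$ and $\|\gamma\|_{H^2}<c_1$ one obtains $c_2\le L\le c_1$ and $Q(\gamma)\le\|\gamma''\|_{L^2}^2<c_1^2$; moreover $\abs{dL_\gamma V}\le\|V'\|_{L^1}\le\|V\|_{C^1}$, $\abs{d^2L_\gamma(V,V)}\le c_2^{-1}\|V'\|_{L^2}^2\le c_2^{-1}\|V\|_{C^1}^2$, and $\abs{dQ_\gamma V}\le 2\|\gamma''\|_{L^2}\|V''\|_{L^2}\le 2c_1\|V''\|_{L^2}$. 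Hence the first and fourth terms are nonnegative, the third is bounded in absolute value by a constant times $\|V\|_{C^1}^2$, and the last is $\ge 2c_1^{-3}\|V''\|_{L^2}^2$. The only delicate term is the second: $\abs{6L^{-4}(dL_\gamma V)(dQ_\gamma V)}\le C_0\|V\|_{C^1}\|V''\|_{L^2}$ with $C_0$ depending only on $c_1,c_2$, and by Young's inequality $C_0\|V\|_{C^1}\|V''\|_{L^2}\le c_1^{-3}\|V''\|_{L^2}^2+C_1\|V\|_{C^1}^2$, so this term can be absorbed into the last one, leaving $c_1^{-3}\|V''\|_{L^2}^2$. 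Collecting the estimates yields $d^2J_\gamma(V,V)\ge c_1^{-3}\|V''\|_{L^2}^2-C_2\|V\|_{C^1}^2$ for a constant $C_2$ depending only on $c_1,c_2,\lambda$.

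Finally, since on $S^1=[0,1]$ we have $\|V\|_{L^2}^2+\|V'\|_{L^2}^2\le 2\|V\|_{C^1}^2$, I would write $\|V''\|_{L^2}^2=\|V\|_{H^2}^2-\|V\|_{L^2}^2-\|V'\|_{L^2}^2\ge\|V\|_{H^2}^2-2\|V\|_{C^1}^2$ and substitute to obtain \eqref{coercive} with $c_3:=c_1^{-3}$ and $c_4:=C_2+2c_1^{-3}$. The only genuine obstacle here is the cross term carrying a single factor of $V''$, which must be handled by the Young-inequality absorption against the leading term; everything else is routine bookkeeping, with the Cauchy--Schwarz nonnegativity of $d^2L_\gamma(V,V)$ a convenient (though not essential) simplification that disposes of two of the five terms for free.
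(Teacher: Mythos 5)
Your proof is correct and takes essentially the same route as the paper: compute $d^2J_\gamma(V,V)$ from $J=\length(\gamma)^{-3}\int|\gamma''|^2du+\lambda^2\length(\gamma)$, bound every term except the leading $\tfrac{2}{\length(\gamma)^3}\|V''\|_{L^2}^2$ by $C^1$-type quantities using $c_2\le\length(\gamma)\le c_1$, absorb the mixed $\|V\|_{C^1}\|V''\|_{L^2}$ term by Young's inequality, and then convert $\|V''\|_{L^2}^2$ into $\|V\|_{H^2}^2$ minus terms controlled by $\|V\|_{C^1}^2$. (Incidentally, your sign $+12L^{-5}Q(dL_\gamma V)^2$ is the correct one — the paper's displayed formula carries a minus there — but since that term is either dropped as nonnegative or bounded in absolute value, nothing in either argument hinges on it.)
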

\begin{proof}
To calculate the second derivative of $J$ first calculate $$d\length_\gamma V=\int_0^1\ip{V',T}\, du$$ and then
\[ d^2\length_\gamma (V,V)=\int_0^1\frac{1}{\abs{\gamma'}}\abs{V'}^2-\frac{1}{\abs{\gamma'}^3}\ip{V',\gamma'}^2 \, du. 
\]
Using the assumptions on $U$ there are constants $\bar c, \tilde c$ such that 
\begin{equation}\label{eq:dlength}
\begin{aligned}
\abs{d\length_\gamma V}&\leq \bar c\norm{V'}_{L^1}, \\
\abs{d^2\length_\gamma(V,V)}&\leq \tilde c\norm{V'}_{L^2}^2. 
\end{aligned}
\end{equation}
The derivatives of $J$ are
\begin{align}\label{eq:dj}
	dJ_\gamma V&=\frac{-3}{\length(\gamma)^4}d\length_\gamma(V)\int_0^1\abs{\gamma''}^2\, du+\frac{2}{\length(\gamma)^3}\int_0^1\ip{V'',\gamma''}\, du+\lambda^2 d\length_\gamma V,
\end{align}
and
\begin{align*}
d^2J_\gamma &(V,V)\\
&=-\frac{12}{\length(\gamma)^5}(d\length_\gamma V)^2\norm{\gamma''}_{L^2}^2-\frac{3}{\length(\gamma)^4}d^2\length_\gamma(V,V)\norm{\gamma''}_{L^2}^2\\ \nonumber
&\qquad -\frac{12}{\length(\gamma)^4}d\length_\gamma V\int_0^1\ip{V'',\gamma''}\, du+\frac{2}{\length(\gamma)^3}\norm{V''}_{L^2}^2+\lambda^2d^2\length_\gamma(V,V), 
\end{align*}
and then using \eqref{eq:dlength} and the assumptions on $U$ again, there are positive constants $a_1,a_2, a_3$ such that
\begin{align*}
d^2J_\gamma(V,V) &\geq a_1\norm{V''}_{L^2}^2-a_2\norm{V'}_{L^2}^2-a_3\norm{V'}_{L^2}\norm{V''}_{L^2}\\
&\geq a_1\norm{V}_{H^2}^2-(a_1+a_2)\norm{V}^2_{H^1}-a_3\norm{V'}_{L^2}\norm{V''}_{L^2}. 
\end{align*}
If we apply the inequality $2ab\leq \varepsilon a^2+\frac{1}{\varepsilon}b^2, \varepsilon>0$ to the last term, choosing $\varepsilon$ sufficiently large and using a Sobolev imbedding we obtain \eqref{coercive}.
\end{proof}

\begin{corollary}\label{corcoer} If $U\subset  \I^2(S^1,\R^n)$ satisfies the same conditions  as in Lemma \ref{loccoer} and is convex, then there exist constants $c_1,c_2$ such that for any $\gamma,\beta \in U$ 
	\begin{equation}\label{coercive2}
\bigl( dJ_\beta - dJ_\gamma \bigr)(\beta-\gamma) \geq c_1 \|\beta-\gamma\|_{H^2}^2 - c_2 \|\beta-\gamma\|_{C^1}^2. 
\end{equation}
\end{corollary}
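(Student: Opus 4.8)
The plan is to integrate the pointwise second-derivative estimate from Lemma~\ref{loccoer} along the straight-line segment joining $\gamma$ to $\beta$. Since $U$ is convex, for arbitrary $\gamma,\beta\in U$ the path $\eta(t):=\gamma+t(\beta-\gamma)$, $t\in[0,1]$, lies entirely in $U$, and crucially the constants $c_3,c_4$ furnished by Lemma~\ref{loccoer} depend only on $U$, hence are uniform along $\eta$.

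First I would check that $t\mapsto dJ_{\eta(t)}(\beta-\gamma)$ is continuously differentiable on $[0,1]$ with derivative $d^2J_{\eta(t)}(\beta-\gamma,\beta-\gamma)$. This is immediate from the explicit formula \eqref{eq:dj} for $dJ$ together with the formula for $d^2J$ displayed in the proof of Lemma~\ref{loccoer}: every term is a smooth function of $\gamma\in\I^2(S^1,\R^n)$ — being built from $\length(\gamma)$, its reciprocal powers, and the bilinear quantity $\int_0^1\langle\cdot,\cdot\rangle\,du$, all of which are already known to be real-analytic on $\I^2(S^1,\R^n)$ by Section~\ref{subsection:analyticity} — composed with the affine map $t\mapsto\eta(t)$, so the chain rule applies. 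The fundamental theorem of calculus then gives
\[
\bigl(dJ_\beta-dJ_\gamma\bigr)(\beta-\gamma)=\int_0^1\frac{d}{dt}\Bigl[dJ_{\eta(t)}(\beta-\gamma)\Bigr]\,dt=\int_0^1 d^2J_{\eta(t)}(\beta-\gamma,\beta-\gamma)\,dt.
\]

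Finally I would apply Lemma~\ref{loccoer} with $V=\beta-\gamma$ at each point $\eta(t)\in U$, obtaining
\[
d^2J_{\eta(t)}(\beta-\gamma,\beta-\gamma)\geq c_3\|\beta-\gamma\|_{H^2}^2-c_4\|\beta-\gamma\|_{C^1}^2\qquad\text{for all }t\in[0,1],
\]
and integrate this inequality over $t\in[0,1]$ to conclude \eqref{coercive2} with $c_1=c_3$ and $c_2=c_4$. There is no serious obstacle here; the only point requiring a word of care is the differentiability claim in the second step, which one may alternatively verify by hand by differentiating \eqref{eq:dj} in the direction $\beta-\gamma$ and matching the result with the formula for $d^2J_\gamma(V,V)$ used in Lemma~\ref{loccoer}.
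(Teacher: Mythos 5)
Your proposal is correct and follows essentially the same argument as the paper: convexity places the segment $\gamma+t(\beta-\gamma)$ in $U$, the fundamental theorem of calculus expresses $(dJ_\beta-dJ_\gamma)(\beta-\gamma)$ as $\int_0^1 d^2J_{\gamma+t(\beta-\gamma)}(\beta-\gamma,\beta-\gamma)\,dt$, and Lemma~\ref{loccoer} with its $U$-uniform constants is applied under the integral. Your extra care about the differentiability of $t\mapsto dJ_{\eta(t)}(\beta-\gamma)$ is a harmless elaboration of a step the paper takes for granted.
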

\begin{proof}
	Since $\gamma+t(\beta-\gamma)\in U $ for all $t\in[0,1]$, \eqref{coercive} holds and
\begin{align*}
\bigl( dJ_{\beta}-dJ_\gamma  \bigr)(\beta-\gamma) &=\int_0^1 \frac{d}{dt}dJ_{\gamma+t(\beta-\gamma)}(\beta-\gamma)\, dt \\
&=\int_0^1 d^2J_{\gamma+t(\beta-\gamma)}(\beta-\gamma,\beta-\gamma)\,dt \\
&  \geq c_1 \|\beta-\gamma\|_{H^2}^2 - c_2 \|\beta-\gamma\|_{C^1}. 
\end{align*} 
\end{proof}

Next we construct a continuous projection onto $T_\alpha\Omega$ using a right inverse for the map $d\Phi$ from Section \ref{subsection:al-para-curves}. 
For this we just need to choose initial conditions for the construction described in the proof of Lemma \ref{proposition:Omega-210630}. 
We define $r_\alpha: H^2_{zm}(S^1,\R)\to H^2(S^1,\R^n)$ by
\begin{align}\label{eq:r}
r_\alpha w &:=y-x,\quad \text{where}\\ \nonumber
y'&= w\frac{\alpha'}{\abs{\alpha'}},\quad y(0)=0, \\ \nonumber 
x'&= B \xi ,\quad x(0)=0, \quad \xi=B^T W^{-1}y(1),
\end{align}
and $B$ and $W$ are the matrices from \eqref{eq:gramian}. Let us confirm that $r$ has the desired properties. 
It follows from \eqref{dphi} that 
\begin{equation}\label{eq:rightinverse}
	d\Phi_\alpha r_\alpha w=\ip{\left(r_\alpha w\right )', \frac{\alpha'}{\abs{\alpha'}}}-\int_0^1 (r_\alpha w)' \frac{\alpha'}{\abs{\alpha'}}\, du=w-\int_0^1 w\, du=w. 	
\end{equation}
Moreover, $r_\alpha w(0)=0$ and 
\begin{align*}
 r_\alpha w(1)=y(1)-\int_0^1 x' \, du& =y(1)-\int_0^1 BB^TW^{-1} y(1)\,du \\ &=y(1)-WW^{-1} y(1)\\ &=0. 
\end{align*}
Thus $r_\alpha w$ is periodic. Periodicity of $(r_\alpha w)'$ follows from that of $\alpha', w$ and $\beta$. 

Now for each $\alpha\in \Omega$ we define $\pr_{T_\alpha\Omega}:H^2(S^1,\R^n)\to T_\alpha\Omega$ by 
\begin{equation}\label{eq:projection}
	\pr_{T_\alpha\Omega}V:=(1-r_\alpha d\Phi_\alpha )V.
\end{equation}
Indeed $d\Phi_\alpha \pr_{T_\alpha\Omega} V=d\Phi_\alpha V -d\Phi_\alpha r_\alpha d\Phi_\alpha V=0$ by \eqref{eq:rightinverse} and so $\pr_{T_\alpha\Omega}\in \ker d\Phi_\alpha=T_\alpha \Omega$.


The following lemma will be used in Proposition \ref{PSC} to estimate terms involving the projection onto $T_\alpha\Omega$. Its proof requires some estimates on the matrix $W$, which we have included in Appendix \ref{appendix}. 

\begin{lemma} \label{estdjpr}
Let $U$ be an $H^2$-bounded subset of $\Omega$ for which there exists $c_1 $  such that   $0<c_1<\abs{\alpha'(u)}$ for all $\alpha \in U$. Then there exists a constant $c$ such that for all $\alpha\in U$ and $v\in H^2$
\begin{equation}
\abs{dJ_\alpha r_\alpha d\Phi_\alpha v }\leq c\norm{v}_{C^1}. 
\end{equation}
\end{lemma}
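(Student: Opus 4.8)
The plan is to combine the explicit formula \eqref{eq:dj} for $dJ_\alpha$ with the construction \eqref{eq:r} of $r_\alpha$ and the formula \eqref{dphi} for $d\Phi_\alpha$, and to keep track of which derivatives of $v$ actually appear. First I would note that $d\Phi_\alpha v$ depends only on $v'$: from \eqref{dphi} we have $d\Phi_\alpha v=\frac{1}{|\alpha'|}\ip{v',\alpha'}-\int_{S^1}\frac{1}{|\alpha'|}\ip{v',\alpha'}\,du$, so since $U$ is $H^2$-bounded and $|\alpha'|>c_1$ on $U$, there is a constant with $\norm{d\Phi_\alpha v}_{C^0}\le c\norm{v'}_{C^0}\le c\norm{v}_{C^1}$ (and similarly $\norm{d\Phi_\alpha v}_{H^1}\le c\norm{v}_{H^2}$, but the point is the $C^0$ bound). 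Thus it suffices to prove $\abs{dJ_\alpha r_\alpha w}\le c\norm{w}_{C^0}$ for all $w\in H^2_{zm}$, because applying this with $w=d\Phi_\alpha v$ closes the argument.

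Next I would estimate $\norm{r_\alpha w}$ in a suitable norm. Writing $r_\alpha w=y-x$ as in \eqref{eq:r}: $y'=w\,\alpha'/|\alpha'|$ gives $\norm{y'}_{C^0}\le\norm{w}_{C^0}$ and $y(0)=0$, so $\norm{y}_{W^{1,\infty}}\le c\norm{w}_{C^0}$; and for $x$ we have $\xi=B^TW^{-1}y(1)$ with $B=\beta[\nu_1\ldots\nu_{n-1}]$, where the $\nu_i$ solve \eqref{frame}. Here the appendix estimates on $W^{-1}$ (invertibility of the Gramian $W$ with a bound on $\norm{W^{-1}}$ uniform over $U$) are needed, together with the observation that the $\nu_i$ are controlled in $W^{1,\infty}$ on $U$ via \eqref{frame} since $|\alpha'|>c_1$ and $\alpha$ is $H^2$-bounded. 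This yields $\norm{x'}_{C^0}\le c\norm{y(1)}\le c\norm{w}_{C^0}$ and hence $\norm{r_\alpha w}_{W^{1,\infty}}\le c\norm{w}_{C^0}$; in particular $\norm{(r_\alpha w)'}_{L^2}\le c\norm{w}_{C^0}$.

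Now I would plug $V=r_\alpha w$ into \eqref{eq:dj}. The terms $\frac{-3}{\length(\alpha)^4}d\length_\alpha(V)\int_0^1|\alpha''|^2\,du$ and $\lambda^2 d\length_\alpha V$ are controlled using \eqref{eq:dlength} (first line), which bounds $\abs{d\length_\alpha V}\le\bar c\norm{V'}_{L^1}\le\bar c\norm{V'}_{L^2}\le c\norm{w}_{C^0}$, together with the $H^2$-bound on $\alpha$ and the lower bound on $\length(\alpha)$ coming from $|\alpha'|>c_1$. The only delicate term is the middle one, $\frac{2}{\length(\alpha)^3}\int_0^1\ip{V'',\alpha''}\,du$, since a priori $V''=(r_\alpha w)''$ involves $w'$ which we are not allowed to use. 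I would handle it by integrating by parts: $\int_0^1\ip{V'',\alpha''}\,du=-\int_0^1\ip{V',\alpha'''}\,du+[\ip{V',\alpha''}]_0^1$. Because $\alpha\in\Omega$ and — crucially — the lemma is only needed at stationary points in Proposition \ref{PSC}, or more directly because on the $H^2$-bounded set $U$ one may shift one derivative off $V''$, one is left with a pairing of $V'$ against $\alpha'''$; but $\alpha'''$ is not $L^2$ in general, so instead I would integrate by parts the other way, moving a derivative \emph{onto} $\alpha''$ is not available either. The clean route: since $r_\alpha w=y-x$ with $y'=w\alpha'/|\alpha'|$ and $x'=B\xi$, differentiate once to get $V''=y''-x''$ where $y''=w'\alpha'/|\alpha'|+w(\alpha'/|\alpha'|)'$ and $x''=B'\xi$; the $w'$ appears only in $\int_0^1 w'\,\ip{\alpha'/|\alpha'|,\alpha''}\,du=\int_0^1 w'\,|\alpha'|\ip{T,\kappa_\alpha}|\alpha'|\,du$ — wait, $\ip{\alpha',\alpha''}=\frac12(|\alpha'|^2)'$, and on $\Omega$ we have $|\alpha'|\equiv\length(\alpha)$ constant, so $\ip{\alpha',\alpha''}\equiv 0$ identically on $\Omega$. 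Hence the $w'$-term vanishes outright, and $\int_0^1\ip{V'',\alpha''}\,du=\int_0^1\bigl[w\ip{(\alpha'/|\alpha'|)',\alpha''}-\ip{B'\xi,\alpha''}\bigr]\,du$, which is bounded by $c(\norm{w}_{C^0}+\abs{\xi})\le c\norm{w}_{C^0}$ using the $L^2$-bounds on $\alpha''$, on $(\alpha'/|\alpha'|)'=\kappa_\alpha$ (Lemma \ref{superlemma}(iii)), and on $B'$.

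The main obstacle is exactly this middle term: naively $dJ$ sees two derivatives of its argument, whereas $r_\alpha d\Phi_\alpha v$ only controls one derivative of $v$ in $C^0$. The resolution is the identity $\ip{\alpha',\alpha''}\equiv 0$ on $\Omega$, which kills the $w'$-contribution coming from $y''$, reducing everything to the lower-order quantities $w$, $\xi$, $\kappa_\alpha$, $B'$ that are all controlled by $\norm{w}_{C^0}$ and the $H^2$-bound on $U$, with the appendix supplying the uniform bound on $\norm{W^{-1}}$ needed to bound $\abs{\xi}$.
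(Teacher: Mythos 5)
Your proof is correct and follows essentially the same route as the paper: the key cancellation $\ip{T,\alpha''}=0$ on $\Omega$ (since $\abs{\alpha'}\equiv\length(\alpha)$ is constant) eliminates the only term involving $(d\Phi_\alpha v)'$ in $\int_0^1\ip{(r_\alpha d\Phi_\alpha v)'',\alpha''}\,du$, and the remaining terms are bounded via \eqref{eq:dlength} and the uniform bound on $\norm{W^{-1}}$ from the appendix, exactly as in the paper's computation \eqref{djpr}. The only slip is writing $x''=B'\xi$ rather than $(B\xi)'=B'\xi+B\xi'$, but the omitted term is controlled by the same estimates (e.g. $\abs{\xi'}\leq c(1+\abs{\alpha''})\norm{w}_{C^0}$), so nothing essential changes.
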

\begin{proof}
For this first we note from \eqref{eq:r} that $\abs{y(1)}\leq \norm{w}_{L^\infty}$, and since the $\nu_i$ are orthonormal, we deduce from Lemma \ref{lemma:W^-1-bdd-2} that
\[  \abs{(r_\alpha w)'}\leq \abs{w}+\norm{W^{-1}}\abs{y(1)}\leq c \norm{w}_{L^\infty}. 
\]
Moreover, it follows from \eqref{dphi} that $\abs{d\Phi_\alpha v}\leq 2\norm{v'}_{L^\infty}$, and then 
\begin{equation}\label{eq:rphidash}
\abs{(r_\alpha d\Phi_\alpha v)'}\leq c\norm{d\Phi_\alpha v}_{L^\infty}\leq 2c\norm{v'}_{L^\infty}.
\end{equation}
From \eqref{eq:r} again, recalling that $\abs{\alpha'}=\length(\alpha)$, calculate the second derivative 
\begin{equation}\label{eq:ddr}
	(r_\alpha d\Phi_\alpha v)''=(d\Phi_\alpha v)'T+d\Phi_\alpha v \frac{\alpha''}{\length(\alpha)}+(B\xi)' \,.
\end{equation} 
Note that because $\abs{\alpha'}$ is constant $\ip{T,\alpha''}=0$ and then from \eqref{eq:dj},
\begin{align}\label{djpr} \nonumber
		dJ_\alpha & r_\alpha d\Phi_\alpha v \\ \nonumber
&=\frac{-3}{\length(\alpha)^4}d\length_\alpha(r_\alpha d\Phi_\alpha v)\int_0^1\abs{\alpha''}^2\, du+\frac{2}{\length(\alpha)^3}\int_0^1\ip{(r_\alpha d\Phi_\alpha v)'',\alpha''}\, du\\ \nonumber 
&\qquad \qquad+\lambda^2 d\length_\alpha r_\alpha d\Phi_\alpha v \\ 
&=\frac{-3}{\length(\alpha)^4}d\length_\alpha(r_\alpha d\Phi_\alpha v)\norm{\alpha''}^2_{L^2}
+\frac{2}{\length(\alpha)^3}\int_0^1\frac{d\Phi_\alpha v}{\length(\alpha)}\abs{\alpha''}^2+\ip{(B\xi)',\alpha''}\, du\\ \nonumber
&\qquad \qquad+\lambda^2 d\length_\alpha r_\alpha d\Phi_\alpha. 
\end{align}
From the definition \eqref{eq:gramian} of $B$ we have \[ (B\xi)'=\beta'\sum_i\nu_i\xi_i+\beta\sum_i\nu_i'\xi_i+B\xi', \]
and from \eqref{frame}, $\ip{\nu_i',\alpha''}=0$, so recalling that $\beta$ is smooth and the $\nu_i$ are normalised, we have
\[
\abs{\ip{(B\xi)',\alpha''}}\leq c_1\abs{\xi}+c_2\abs{\xi'}. 
\]
Now as in \eqref{eq:rphidash}, we have $\abs{\xi}\leq \norm{W^{-1}}\abs{y(1)} \leq 2c \norm{v'}_{L^\infty}$. 
Since
\[\xi'=\left(\beta'[\nu_1\ldots \nu_{n-1}]^T+\beta [\nu_1' \ldots \nu_{n-1}']^T\right ) W^{-1} y(1),  
 \]
using \eqref{frame} to estimate $\nu_i'$, we obtain 
\[
\abs{\xi'}\leq \left (c_1+c_2\abs{\alpha''}\right )\norm{v'}_{L^\infty}. 
\]
Thus it follows that $\abs{\ip{(B\xi)',\alpha''}}\leq \left (c_1+c_2\abs{\alpha''}\right )\abs{\alpha''}\norm{v'}_{L^\infty}$. 
Combining this into \eqref{djpr} with the estimates \eqref{eq:dlength}, \eqref{eq:rphidash}, and $\abs{d\Phi_\alpha v}\leq \norm{v'}_{L^\infty}$, we see that
\begin{align*}
\abs{dJ_\alpha r_\alpha d\Phi_\alpha v }\leq c\norm{v'}_{L^\infty}. 
\end{align*}
Therefore Lemma \ref{estdjpr} follows. 
\end{proof}

Now we can prove the following Palais--Smale type condition for $\E|\Omega$ (it is not quite the standard condition because we need to assume an $L^2$-bound on the sequence).

\begin{proposition}\label{PSC}
Let $(\alpha_i)$ be a sequence of curves in $\Omega$ such that $\E(\alpha_i)$ and $\| \alpha_i \|_{L^2}$ are bounded, and $\norm{d(\E_{\alpha_i})}\to 0$. Then $(\alpha_i)$ has a subsequence converging in $H^2$. 
\end{proposition}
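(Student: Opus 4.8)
The plan is to exploit the coercivity estimate from Lemma~\ref{loccoer} (in the form of Corollary~\ref{corcoer}) together with the Rellich--Kondrachov compactness $H^2 \subset\subset C^1$ to upgrade a weak subsequential limit to a strong $H^2$ limit. First I would extract the weak limit. Since $\|\alpha_i\|_{L^2}$ is bounded and $\E(\alpha_i)$ is bounded, the definition \eqref{J} of $J$ (which equals $\E$ on $\Omega$) gives a uniform bound on $\length(\alpha_i)$ from below (curves in $\Omega$ are arc length proportional, and $\E \geq \lambda^2\length$) and then a uniform bound on $\int_0^1|\alpha_i''|^2\,du$; combined with the $L^2$-bound on $\alpha_i$ this yields $\|\alpha_i\|_{H^2} \leq c_1$ for some constant. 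Passing to a subsequence, $\alpha_i \rightharpoonup \alpha$ weakly in $H^2$, and by compactness $\alpha_i \to \alpha$ strongly in $C^1$. A short argument using the lower bound on $\length$ and the $C^1$-convergence shows $\alpha \in \I^2(S^1,\R^n)$ and in fact $\alpha\in\Omega$ (the constraint $|\gamma_u|=\length(\gamma)$ passes to the $C^1$-limit).

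Next I would set up the coercivity comparison. Choose a convex $H^2$-neighbourhood $U$ of the sequence satisfying the hypotheses of Corollary~\ref{corcoer} (possible since $\|\alpha_i\|_{H^2}$ is bounded and $|\alpha_i'| \equiv \length(\alpha_i)$ is bounded below), so that for $i,j$ large
\[
\bigl(dJ_{\alpha_i} - dJ_{\alpha_j}\bigr)(\alpha_i-\alpha_j) \geq c_1\|\alpha_i-\alpha_j\|_{H^2}^2 - c_2\|\alpha_i-\alpha_j\|_{C^1}^2.
\]
The right-hand $C^1$-term tends to zero since $(\alpha_i)$ is Cauchy in $C^1$. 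So it remains to show the left-hand side tends to zero, and this is where the hypothesis $\|d(\E|\Omega)_{\alpha_i}\|\to 0$ and the projection machinery enter: write $\alpha_i-\alpha_j = \pr_{T_{\alpha_i}\Omega}(\alpha_i-\alpha_j) + r_{\alpha_i}d\Phi_{\alpha_i}(\alpha_i-\alpha_j)$ (and similarly with roles swapped), so that
\[
dJ_{\alpha_i}(\alpha_i-\alpha_j) = dJ_{\alpha_i}\bigl(\pr_{T_{\alpha_i}\Omega}(\alpha_i-\alpha_j)\bigr) + dJ_{\alpha_i}\bigl(r_{\alpha_i}d\Phi_{\alpha_i}(\alpha_i-\alpha_j)\bigr).
\]
The first term is controlled by $\|d(\E|\Omega)_{\alpha_i}\| \cdot \|\pr_{T_{\alpha_i}\Omega}(\alpha_i-\alpha_j)\|_{H^2}$, which tends to zero because $\|d(\E|\Omega)_{\alpha_i}\|\to 0$ and the projected differences are $H^2$-bounded (one needs that $\pr_{T_\alpha\Omega}$ is uniformly bounded on $U$, which follows from uniform bounds on $r_\alpha$ and $d\Phi_\alpha$ established en route to Lemma~\ref{estdjpr}, plus the fact that on $\Omega$ the operator norm of $dJ_\alpha$ restricted to $T_\alpha\Omega$ agrees with $\|d(\E|\Omega)_\alpha\|$ since $J|\Omega=\E|\Omega$). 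The second term is exactly the quantity bounded by Lemma~\ref{estdjpr}: $|dJ_{\alpha_i}r_{\alpha_i}d\Phi_{\alpha_i}(\alpha_i-\alpha_j)| \leq c\|\alpha_i-\alpha_j\|_{C^1} \to 0$.

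Putting these together: $(dJ_{\alpha_i}-dJ_{\alpha_j})(\alpha_i-\alpha_j)\to 0$ and $\|\alpha_i-\alpha_j\|_{C^1}\to 0$, so the coercivity inequality forces $\|\alpha_i-\alpha_j\|_{H^2}\to 0$, i.e. $(\alpha_i)$ is Cauchy in $H^2$ and hence converges strongly in $H^2$ to $\alpha$. The main obstacle I anticipate is the bookkeeping in the second paragraph: one must verify that the constants in Corollary~\ref{corcoer}, in the bound on $\pr_{T_\alpha\Omega}$, and in Lemma~\ref{estdjpr} can all be chosen uniformly over a single neighbourhood $U$ containing the tail of the sequence, and that the $L^2$-bound hypothesis genuinely delivers the $H^2$-bound needed to place the sequence in such a $U$ — the reparametrisation-proportional structure of $\Omega$ is what makes this work, since it converts the energy bound into a second-derivative bound. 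Everything else (weak compactness, Rellich, closedness of $\Omega$ under $C^1$-limits) is standard.
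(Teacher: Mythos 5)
Your proposal follows essentially the same route as the paper's proof: derive an $H^2$-bound from the energy and $L^2$ bounds, pass to a $C^1$-convergent subsequence by Rellich, apply the coercivity estimate of Corollary~\ref{corcoer} on a suitable neighbourhood, split $dJ_{\alpha_i}$ via the projection \eqref{eq:projection} so that the tangential part is controlled by $\|d(\E|\Omega)_{\alpha_i}\|\to 0$ and the complementary part by Lemma~\ref{estdjpr}, and conclude the sequence is Cauchy in $H^2$. The only blemish is the parenthetical justification of the lower bound on $\length(\alpha_i)$: the inequality $\E\geq\lambda^2\length$ gives an \emph{upper} bound on length; the lower bound (needed so that $|\alpha_i'|\equiv\length(\alpha_i)$ stays away from zero) instead follows, e.g., from Fenchel's theorem as used elsewhere in the paper, so this is an easily repaired slip rather than a gap.
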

\begin{proof}
The assumed bounds on $\E(\alpha_i)$ and $\|\alpha_i\|_{L^2}$ together imply an $H^2$-bound and then by compactness of the Sobolev imbedding $H^2 \subset C^1$ we have a subsequence, still denoted $(\alpha_i)$, which converges in $C^1$ to some $\alpha_\infty$. 
Let $\varepsilon>0$ be sufficiently small so that the open $C^1$-ball $B_\varepsilon^{C^1}(\alpha_\infty)$ contains only immersions (c.f. Lemma~\ref{superlemma}) and choose $N$ sufficiently large that for all $i>N$, $\alpha_i\in B_\varepsilon^{C^1}(\alpha_\infty)$. Then for $i>N$, which we assume from now on, $\alpha_i$ is contained in a neighbourhood $U$ satisfying the assumptions of Corollary~\ref{corcoer}. Hence from \eqref{coercive2} we get 
\[ 
\bigl( dJ_{\alpha_j}-dJ_{\alpha_i} \bigr)(\alpha_j-\alpha_i) \geq c_1 \|\alpha_i-\alpha_j\|_{H^2}^2 - c_2 \|\alpha_i-\alpha_j\|_{C^1}^2\,
\]
with $c_1>0$. From \eqref{eq:projection} and $J|\Omega=\E|\Omega$
\begin{align*} 
\bigl( dJ_{\alpha_j}-dJ_{\alpha_i} \bigr)(\alpha_j-\alpha_i)&=\bigl( d\E_{\alpha_j}\pr_{T_{\alpha_j}\Omega}-d\E_{\alpha_i}\pr_{T_{\alpha_i}\Omega} \bigr)(\alpha_j-\alpha_i)\\
&\qquad +(dJ_{\alpha_j}r_{\alpha_j} d\Phi_{\alpha_j}-dJ_{\alpha_i}r_{\alpha_i} d\Phi_{\alpha_i})(\alpha_j-\alpha_i). 
\end{align*}
Then rearranging the inequality
\begin{align*}
	c_1 \|\alpha_i-\alpha_j\|_{H^2}^2& \leq c_2 \|\alpha_i-\alpha_j\|_{C^1}^2 +\bigl( d\E_{\alpha_j}\pr_{T_{\alpha_j}\Omega}-d\E_{\alpha_i}\pr_{T_{\alpha_i}\Omega} \bigr)(\alpha_j-\alpha_i)\\
&\qquad +(dJ_{\alpha_j}r_{\alpha_j} d\Phi_{\alpha_j}-dJ_{\alpha_i}r_{\alpha_i} d\Phi_{\alpha_i})(\alpha_j-\alpha_i). 
\end{align*}
Now by Lemma \ref{estdjpr}, since $\norm{d(\E_{\alpha_i})}\to 0$ by assumption and $\alpha_i$ converges in $C^1$, 
the right hand side of the above inequality converges to zero and $\alpha_i$ converges in $H^2$. 
\end{proof}

\begin{theorem} \label{theorem:5.5}
Let $\gamma$ be a solution to the $H^2(ds)$-gradient flow of the modified elastic energy $\E$. 
Then there is a stationary point $\gamma_\infty\in H^2(S^1,\R^n)$ such that $\gamma(t)\to \gamma_\infty$ in $H^2$ as $t \to \infty$.
\end{theorem}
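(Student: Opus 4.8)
\textbf{Proof proposal for Theorem \ref{theorem:5.5}.}

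The plan is to run the standard \L ojasiewicz--Simon argument, using the global existence from Proposition \ref{git-existence}, the gradient inequality from Theorem \ref{LS2}, and the metric completeness from Theorem \ref{bruv2}, with the Palais--Smale condition of Proposition \ref{PSC} and the estimate \eqref{l2time} supplying the compactness needed to locate a limit critical point. First I would use \eqref{l2time}: since $t\mapsto \E(\gamma(t))$ is nonincreasing and bounded below (by $\lambda^2\length\geq 0$, or by $0$), it converges to some $\E_\infty$, and $\int_0^\infty \norm{\grad\E_\gamma}_{H^2(ds)}^2\,dt<\infty$. Hence there is a sequence $t_i\to\infty$ with $\norm{\grad\E_{\gamma(t_i)}}_{H^2(ds)}\to 0$; moreover from $\E(\gamma(t))$ bounded we get $\int|\gamma''(t_i)|^2\,du$ bounded, and we may also assume the $\norm{\gamma(t_i)}_{L^2}$ stay bounded, provided we can control the trajectory's position --- more on that below. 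Applying the reparametrisation projection $P$ (continuous near stationary points, Lemma \ref{Pconts}, but here we just need boundedness of $P(\gamma(t_i))$, which follows from the $H^2$-bounds and the lower bound on $|\gamma'|$), the sequence $\alpha_i:=P(\gamma(t_i))\in\Omega$ satisfies the hypotheses of Proposition \ref{PSC}, since $\norm{d(\E|\Omega)_{\alpha_i}}\leq\norm{d\E_{\alpha_i}}_{{H^2}^*}\leq c\norm{\grad\E_{\gamma(t_i)}}_{H^2(ds)}\to 0$ by the parametrisation-invariance computation as in the proof of Theorem \ref{LS2}. So a subsequence of $\alpha_i$ converges in $H^2$ to some $\varsigma\in\Omega$, which is stationary for $\E$ because $d\E$ is continuous; and then $\gamma(t_i)$ itself subconverges in $H^2$ (along a further subsequence) to a reparametrisation $\sigma$ of $\varsigma$, which is also stationary.

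Next I would set up the \L ojasiewicz--Simon trapping argument around $\sigma$. Let $Z,\delta,\theta$ be the constants from Theorem \ref{LS2} at $\sigma$. Define $H(t):=(\E(\gamma(t))-\E_\infty)^{1-\theta}$ (note $\E(\gamma(t))\geq\E_\infty$ since $\E$ decreases along the flow and $\varsigma$ realizes $\E_\infty$ as a limit); then wherever $\gamma(t)$ lies in the ball $B^{H^2}_\delta(\sigma)$ and $\E(\gamma(t))>\E_\infty$ we compute
\[
-\frac{d}{dt}H(t)=(1-\theta)\frac{\norm{\grad\E_{\gamma}}_{H^2(ds)}^2}{(\E(\gamma(t))-\E_\infty)^{\theta}}\geq (1-\theta)Z\,\norm{\grad\E_{\gamma}}_{H^2(ds)}=(1-\theta)Z\,\norm{\gamma_t}_{H^2(ds)},
\]
using the gradient inequality in the denominator and $\gamma_t=-\grad\E_\gamma$. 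Integrating this differential inequality over any time interval on which the trajectory stays in the ball shows the $H^2(ds)$-length of that portion of the trajectory is bounded by $\frac{1}{(1-\theta)Z}H(t_{\mathrm{start}})$, which is small when $\E(\gamma(t_{\mathrm{start}}))$ is close to $\E_\infty$. A standard continuation argument then shows the trajectory cannot escape the ball after a sufficiently late time: pick $t_i$ large enough that $\gamma(t_i)$ is within $H^2$-distance $\delta/2$ of $\sigma$ (using the subconvergence) and $\frac{1}{(1-\theta)Z}H(t_i)<\delta/(4C)$ where $C$ relates $H^2$-distance to $\dist$ on the ball via Lemma \ref{bruv1}(ii); then as long as the trajectory remains in the ball its $\dist$-length after $t_i$ is $<\delta/(4C)$, hence its $H^2$-displacement is $<\delta/4$, so it stays strictly inside, and the flow is defined for all time by Proposition \ref{git-existence}. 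Therefore $\int_{t_i}^\infty\norm{\gamma_t}_{H^2(ds)}\,dt<\infty$; by Lemma \ref{finitelengthlimit} the limit $\gamma_\infty:=\lim_{t\to\infty}\gamma(t)$ exists in $(\I^2(S^1,\R^n),\dist)$, hence in $H^2$ since the topologies agree, and $\gamma_\infty$ is stationary because $\grad\E_{\gamma_\infty}=\lim\grad\E_{\gamma(t)}=0$ (the limit is zero since the $L^2$-in-time integral of its square is finite and $\E$ decreases monotonically, or simply by continuity of $\grad\E$ and $\gamma_t\to 0$ along the convergent sequence). Finally, $\gamma_\infty$ being a stationary point of $\E$ in $\I^2$, and in dimension $n=2$ the classification of critical points, makes it an elastica; this yields Theorem \ref{maintheorem}.

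The main obstacle I anticipate is the bookkeeping around \emph{which} critical point the trajectory converges to and the absence of a translation correction: one must ensure the subsequential limit $\sigma$ obtained from Proposition \ref{PSC} via $\Omega$ can be promoted to a limit of $\gamma(t_i)$ in $\I^2(S^1,\R^n)$ (not merely of its reparametrisations in $\Omega$), which is exactly where Lemma \ref{Pconts} --- the continuity of the arc-length reparametrisation projection \emph{at stationary points} --- is essential: it lets us transfer the $H^2$-convergence $\alpha_i\to\varsigma$ back to $\gamma(t_i)$. A secondary technical point is verifying the boundedness of $\norm{\gamma(t_i)}_{L^2}$ needed to invoke Proposition \ref{PSC}; this requires observing that $\gamma_t=-\grad\E_\gamma$ and the explicit formula \eqref{eq:h2gradu} give $\frac{d}{dt}\int\gamma\,ds$ or the centre of mass a controlled (indeed $L^1$-in-time summable, once the \L ojasiewicz estimate is in force) rate of change, so the trajectory does not run off to infinity --- alternatively one shows the centre of gravity is stationary under the flow as in some of the references, or absorbs this into the same \L ojasiewicz bootstrap. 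Apart from these points the argument is the well-trodden one and the remaining steps are routine.
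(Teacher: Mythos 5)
Your overall strategy (energy decay and \eqref{l2time} to get $\norm{\grad\E}_{H^2(ds)}\to 0$ along a sequence, Proposition \ref{PSC} on $\Omega$ to produce a stationary limit, then the \L ojasiewicz trapping argument, finite $H^2(ds)$-length and completeness) matches the paper's, but there is a genuine gap at the step where you promote the convergence $\alpha_i=P(\gamma(t_i))\to\varsigma$ to subconvergence of $\gamma(t_i)$ itself ``to a reparametrisation $\sigma$ of $\varsigma$''. Lemma \ref{Pconts} goes the wrong way for this: it says that if $\gamma$ is $H^2$-close to a stationary point then $P(\gamma)$ is $H^2$-close to its projection; it does not let you recover $\gamma(t_i)$ from $P(\gamma(t_i))$. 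Since $\E$ and $\norm{\grad\E}_{H^2(ds)}$ are parametrisation invariant, nothing along the flow gives compactness of the parametrisations $u\mapsto\abs{\gamma'(t_i)(u)}$: curves $\varsigma\circ\phi_i$ with wildly varying $\phi_i$ all have the same projection, the same energy and vanishing gradient, yet do not converge in $H^2$. So your trapping argument, which needs $\gamma(t_i)$ itself to enter $B^{H^2}_{\delta}(\sigma)$, cannot get started. The paper never proves subconvergence of $\gamma(t_i)$: it introduces auxiliary solutions $\beta_i(t)$ of \eqref{eq:GF} with initial data $\beta_i(t_i)=\alpha_i$ and uses uniqueness of the ODE together with the equivariance of the gradient under reparametrisation and translation \eqref{gradinvariance} to conclude that $\beta_i(t)$ is a \emph{fixed} reparametrisation and translation of $\gamma(t)$ for $t>t_i$, hence $\norm{\grad\E_{\beta_i(t)}}_{H^2(ds)}=\norm{\grad\E_{\gamma(t)}}_{H^2(ds)}$ and the two trajectories have the same $H^2(ds)$-length. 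The trapping is performed for $\beta_i$ near $\alpha_\infty$, the finiteness of length transfers to $\gamma$, and Lemma \ref{finitelengthlimit} with Theorem \ref{bruv2} then gives convergence of $\gamma$ itself.

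A second, more easily repaired defect is the $L^2$-bound needed for Proposition \ref{PSC}: it does not ``follow from the $H^2$-bounds'', since the energy controls only derivatives and length, and $\int\norm{\gamma_t}_{H^2(ds)}^2\,dt<\infty$ does not prevent the trajectory from drifting to spatial infinity; moreover the centre of gravity is \emph{not} preserved by this flow (that holds only for the inextensible flows mentioned in the introduction), and invoking the \L ojasiewicz estimate to control the drift is circular, because that estimate is available only after the critical point has been located. The paper's fix is simply to subtract the centre of mass before applying Proposition \ref{PSC}, i.e.\ to work with $\alpha(t)=P(\gamma(t))-\frac{1}{\length(\gamma(t))}\int_0^{\length(\gamma(t))}\gamma(t)\,ds$ and use the Poincar\'e--Wirtinger inequality; since the energy, stationarity and the gradient norm are translation invariant, the rest of the argument is unaffected. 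If you adopt both devices (translation before the Palais--Smale step, and the auxiliary flows $\beta_i$ in place of subconvergence of $\gamma(t_i)$), your argument becomes the paper's proof.
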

\begin{proof}
Consider the projected and translated flow 

\[
\alpha(t):=P(\gamma(t)) - \frac{1}{\length(\gamma(t))}\int^{\length(\gamma(t))}_0 \gamma(t) \, ds,
\] 
where as in Lemma~\ref{Pconts}, $P(\gamma(t))$ is the arc length proportional reparametrisation of $\gamma(t)$. 
From parametrisation and translation invariance of the energy we have $\lambda^2 \length(\alpha)<\E(\alpha)=\E(\gamma)\leq \E(\gamma(0))$. 
Moreover, using the estimates in Lemma~\ref{Pconts} and the Poincar\'e--Wirtinger inequality, we see that $\|\alpha(t) \|_{L^2}$ is also bounded.
From \eqref{l2time} (with $T\to \infty$) there exists a monotone, divergent sequence $(t_i)$ such that \[\norm{\grad \E(\gamma(t_i))}_{H^2(ds)}\to 0 .\] 
Then \eqref{supnormalpha} and \eqref{Vcirc} imply that 
\[ 
\|d\E_\alpha\|_{{H^2}^*}\leq \Bigl( \length(\gamma) + \frac{1}{\length(\gamma)}+ \frac{1}{\length(\gamma)^3}\Bigr)^{1/2} \|\grad \E_\gamma \|_{H^2(ds)}. 
\]
Since $\length(\gamma)<\E(\gamma(0))/\lambda^2$, and applying the H\"older inequality to Fenchel's theorem $2\pi\leq \int |k| ds$ gives
\[ 
\frac{1}{\length(\gamma)}\leq \frac{1}{4\pi^2}\int k^2 ds < \frac{1}{4\pi^2}\E(\gamma(0)), 
\]
it follows that $\|d\E_{\alpha(t_i)}\|_{{H^2}^*}\to 0$ too. 
Hence $\alpha(t_i)$ which we abbreviate to $\alpha_i$ satisfies the assumptions of Proposition \ref{PSC} and there exists a subsequence, still denoted $\alpha_i$, converging in $H^2$ to a stationary point $\alpha_\infty$. 
Now by Theorem~\ref{LS2} there are constants $Z>0$, $\delta\in (0,1]$, and $\theta\in [1/2,1)$ such that for any $x\in \I^2$ with $\|x-\alpha_\infty\|_{H^2}<\delta$:
\begin{equation} \label{LSineq}
\|\grad \E_x\|_{H^2(ds)} \geq Z \abs{\E(x)-\E(\alpha_\infty)}^\theta. 
\end{equation}
Since the $H^2(ds)$-Riemannian distance and the standard $H^2$ metric are equivalent (Section \ref{metrics}), there exist $\tilde \delta>0, r>0$ such that $B^{H^2}_{\tilde \delta}(\alpha_\infty)\subset B^{\dist}_r(\alpha_\infty)\subset B^{H^2}_\delta(\alpha_\infty)$.
For any $i$ such that $\alpha_i\in B^{H^2}_{\tilde \delta}(\alpha_\infty) $ we let $\beta_i(t)$ be the $H^2(ds)$-gradient flow with intial data $\beta_i(t_i)=\alpha_i$. 
Then due to the uniqueness of the flow, for all $t>t_i$, $\beta_i(t)$ is a fixed (i.e. time independent) reparametrisation and translation of $\gamma(t)$ , namely $\beta_i(t)=\gamma(t)\circ \omega_{\gamma(t_i)}^{-1}-\frac{1}{\length(\gamma(t_i))}\int^{\length(\gamma(t_i))}_0 \gamma(t_i) \, ds$, 
and therefore by \eqref{gradinvariance} we have 
\begin{equation}
\label{eq:210702-1}
\|\grad \E_{\beta_i(t)}\|_{H^2(ds)}=\|\grad \E_{\gamma(t)}\|_{H^2(ds)}. 
\end{equation}
It follows that the trajectories $\beta_i(t)$ and $\gamma(t)$ have the same $H^2(ds)$-length. 
Let $T_i$ be the maximum time such that $\| \beta_i(t)-\alpha_\infty \|_{H^2}<\tilde \delta$ for all $t\in [t_i,T_i)$. 
Define
\[ 
H(t):=(\E(\gamma(t))-\E(\alpha_\infty))^{1-\theta}.  
\]
Then $H>0$ and is monotonically decreasing because $\E(\alpha)=\E(\gamma)$. 
Since \eqref{LSineq} holds for $\beta_i(t)$ with $t\in [t_i,T_i)$, 
we observe from $\E(\beta_i(t))=\E(\gamma(t))$ and~\eqref{eq:210702-1} that 
\begin{align*}
-H'(t) &= -(1-\theta)\left (\E(\gamma(t))-\E(\alpha_\infty)\right )^{-\theta} \frac{d\E(\gamma(t))}{dt} \\
&=(1-\theta)\left (\E(\gamma(t))-\E(\alpha_\infty)\right )^{-\theta} \| \grad \E_\gamma\|^2_{H^2(ds)}  \\
& \geq (1-\theta) Z \|\grad \E_\gamma\|_{H^2(ds)}. 
\end{align*}
Integrating over $[t_i,T_i)$ we get 
\[ 
(1-\theta) Z \int_{t_i}^{T_i} \| \grad \E_\gamma \|_{H^2(ds)}\,dt\leq H(t_i)-H(T_i).  
\]
Now if we fix a $j$ such that $\|\alpha_j-\alpha_\infty\|_{H^2}<\tilde \delta$ and let $W:=\cup_{i\geq j}[t_i,T_i)$ we have that 
\begin{equation}\label{eq:finite}
\int_W \| \grad \E_\gamma\|_{H^2(ds)} \,dt \leq \dfrac{H(t_i)}{(1-\theta)Z} . 
\end{equation}
In fact, there exists $N \in \mathbb{N}$ such that $\| \beta_N(t)-\alpha_\infty\|_{H^2}<\tilde\delta$ for \emph{all} $t>t_N$. 
If not, then for each $i \in \mathbb{N}$ there exists $ T_i$ such that $\beta_i( T_i)$ is on the boundary of the ball $B^{H^2}_{\tilde \delta}(\alpha_\infty)$, and there exists a subsequence, still denoted $(t_i)$, such that the intersection $\cap_{i\geq j}[t_i,T_i)$ is empty.
By the choice of $\tilde \delta$, Lemma~\ref{bruv1} applies and there is a $C>0$, depending only on $\alpha_\infty$ and $r$, such that
\begin{align}\nonumber
\tilde{\delta} = \|\beta_i( T_i)-\alpha_\infty\|_{H^2} 
&\leq  \|\beta_i(t_i)-\alpha_\infty\|_{H^2} + \|\beta_i(t_i)-\beta_i( T_i)\|_{H^2} \\ \label{deltabound}
&\leq \|\alpha(t_i)-\alpha_\infty\|_{H^2} + C \dist(\beta_i(t_i),\beta_i( T_i))\\ \nonumber
&\leq \|\alpha(t_i)-\alpha_\infty\|_{H^2} + C \int_{t_i}^{ T_i} \|\gamma_t\|_{H^2(ds)}\, dt, 
\end{align}
where we have used \eqref{eq:210702-1}. 
But then the integral $\int_W \| \grad \E_\gamma\|_{H^2(ds)}\,dt$ cannot be finite, contradicting \eqref{eq:finite}. 
So there exists $N \in \mathbb{N}$ such that $\beta_N(t)\in B_{\tilde \delta}^{H^2}(\alpha_\infty)$ for all $t>t_N$ and therefore 
\[
	\int_{t_N}^\infty \|\gamma_t\|_{H^2(ds)} \, dt <\infty,  
\]
that is, the $H^2(ds)$-length of $\gamma(t)$ is finite. Hence by Lemma~\ref{finitelengthlimit} the flow converges in the $H^2(ds)$-distance, and therefore also in $H^2$ (Lemma~\ref{bruv1}).
\end{proof}

We are now in a position to prove Theorem \ref{maintheorem}. 

\begin{proof}[Proof of Theorem {\rm \ref{maintheorem}}]
By Proposition \ref{shortexistence} we see that problem \eqref{eq:GF} possesses a unique local-in-time solution $\gamma \in C^1([0,T), \I(S^1, \R^n))$ for some $T>0$. 
Proposition \ref{git-existence} extends the local-in-time solution into a global-in-time solution. 
Finally, we observe from Theorem \ref{theorem:5.5} that the global-in-time solution converges to an elastica as $t \to \infty$ in the $H^2$-topology.  
\end{proof}

\begin{remark}

As in \cite{Langer-Singer_1985} the classification of closed elastica in $\R^2$ and $\R^3$ allows us to determine the shape of the limit in these cases. In $\R^2$ the only closed elastica are the (geometric) circle, the figure eight elastica and their multiple covers. Since the flow must remain in a path component of $\I^2(S^1,\R^2)$, for rotation index $p>0$ the limit is a $p$-fold circle, and an initial curve with rotation index $0$ will converge to a (possible multiply covered) figure eight. In $\R^3$ there are more closed elastica, however it was proved in \cite{Langer-Singer_1985} that circles are the only \emph{stable} closed elastica. 
In both $\R^2$ and $\R^3$ it follows from \eqref{critsmooth} that the limiting circle has radius $\abs{\lambda}^{-1}$.
\end{remark}

\begin{remark}
It was mentioned in the introduction that in \cite{DallAcqua:2016aa} and \cite{MP_2021} the convergence results for the $L^2(ds)$-gradient flow of elastic energy are modulo reparametrisation. They are obtained by proving a \L ojasiewicz--Simon gradient inequality for the $L^2$-norm of the gradient in a space of graphs over the critical point, which is different to the approach taken here. However, it seems that the key difference is not in the method, but in the fact that the $L^2(ds)$-metric does not dominate $L^2$ without extra assumptions on parametrisation, and therefore it would not be possible to verify an $L^2$ version of \eqref{deltabound}.

\end{remark}

\begin{appendix}

\section{Auxiliary lemmata} \label{appendix}
In this appendix, we prove some estimates on the matrix $W$ defined by \eqref{eq:gramian} that are used in the proof of Lemma \ref{estdjpr}. 

\begin{lemma} \label{lemma:W^-1-bdd-1}
Let $\gamma \in \I^2(S^1,\R^n)$. Then there exists a constant $C>0$ such that 
\[
\| W^{-1} \| < C, 
\]
where $W$ is defined by \eqref{eq:gramian}. 
\end{lemma}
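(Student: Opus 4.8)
The plan is to reduce the statement to the invertibility of $W$. For a fixed $\gamma$ the matrix $W$ is a fixed $n\times n$ matrix, so once we know $W$ is invertible the bound $\norm{W^{-1}}<C$ holds trivially with, say, $C:=\norm{W^{-1}}+1$; equivalently, the content is that $W$ is symmetric and positive definite, and the ($\gamma$-dependent) constant is $C=\lambda_{\min}(W)^{-1}$, where $\lambda_{\min}(W)$ denotes the smallest eigenvalue of $W$.

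First I would make the quadratic form explicit. For $a\in\R^n$, using that $B=\beta[\nu_1\ \ldots\ \nu_{n-1}]$ and that $\{T,\nu_1,\ldots,\nu_{n-1}\}$ is an orthonormal frame along $\gamma$,
\[
a^T W a=\int_0^1 \abs{B^T a}^2\,du=\int_0^1\beta(u)^2\sum_{i=1}^{n-1}\ip{a,\nu_i(u)}^2\,du=\int_0^1\beta(u)^2\bigl(\abs{a}^2-\ip{a,T(u)}^2\bigr)\,du,
\]
where in the last step I used $\sum_{i}\ip{a,\nu_i}^2=\abs{a}^2-\ip{a,T}^2$. In particular $a^TWa\ge0$, so $W$ is positive semidefinite. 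To see that it is in fact positive definite, suppose $a^TWa=0$ for some unit vector $a$. Since the integrand is nonnegative and $\beta>0$ on $(0,1)$ (which we assume of the smooth cutoff $\beta$; cf. the proof of Proposition~\ref{proposition:Omega-210630}), this forces $\ip{a,T(u)}^2=1$ for almost every $u\in(0,1)$, and hence for all $u$ by continuity of $T=\gamma'/\abs{\gamma'}$ (recall $\gamma\in H^2\subset C^1$ and $\abs{\gamma'}>0$). Thus $T(u)\in\{a,-a\}$ for all $u$; since $T$ is continuous and $S^1$ is connected, $T$ is constant, whence $\gamma(u)=\gamma(0)+a\int_0^u\abs{\gamma'}\,d\tau$ is a nonconstant straight arc, contradicting that $\gamma$ is a closed curve.

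Therefore $a^TWa>0$ for every unit vector $a$, and since $a\mapsto a^TWa$ is continuous on the compact unit sphere we get $\lambda_{\min}(W)=\min_{\abs{a}=1}a^TWa>0$. Hence $W$ is invertible with $\norm{W^{-1}}=\lambda_{\min}(W)^{-1}=:C<\infty$, which proves the lemma. I do not expect a real obstacle here: this is essentially the invertibility argument already appearing inside the proof of Proposition~\ref{proposition:Omega-210630}, now recorded together with its finite, $\gamma$-dependent quantitative consequence. The genuine difficulty is deferred to the uniform version, Lemma~\ref{lemma:W^-1-bdd-2}, where the constant must be chosen uniformly over an $H^2$-bounded family of immersions with $\abs{\alpha'}$ bounded below; there one additionally needs the orthonormal frame $\{T,\nu_i\}$ to depend continuously enough on $\alpha$ and a uniform positive lower bound for $\lambda_{\min}(W)$, which is where the substantive estimates lie.
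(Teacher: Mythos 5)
Your proposal is correct and follows essentially the same argument as the paper: both compute the quadratic form $a^TWa=\int_0^1\beta^2\bigl(1-\ip{a,T}^2\bigr)\,du$ via the orthonormal frame and rule out a zero smallest eigenvalue because $T=\pm a$ would contradict that $\gamma$ is closed with $T$ continuous. The only cosmetic difference is that the paper works directly with a unit eigenvector for the smallest eigenvalue, while you argue positive definiteness over all unit vectors (and you helpfully make explicit the requirement $\beta>0$ on $(0,1)$, which the paper leaves implicit).
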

\begin{proof}
Let $\{ T(u), \nu_1(u), \ldots, \nu_{n-1}(u) \}$ be an orthonormal basis at $\gamma(u)$ for $u \in S^1$ (e.g. as in \eqref{frame}).
Let $\mu \in \R$ be the smallest eigenvalue of $W$ and $\eta$ a corresponding eigenvector with  $|\eta|=1$.
Since 
\[
\eta^T W \eta = \mu \eta^T \eta = \mu, 
\]
we have 
\[
\mu = \int^1_0 \eta^T B B^T \eta\, du
= \int^1_0 | B^T \eta|^2 \, du
= \sum^{n-1}_{j=1} \int^1_0 \beta^2 \ip{\eta, \nu_j}^2 \, du,  
\]
where we used the definition of $B$ \eqref{eq:gramian}.  
Recalling that 
\[
1 = |\eta|^2 = \ip{\eta, T}^2 + \sum^{n-1}_{j=1} \ip{\eta, \nu_j}^2, 
\]
we see that 
\begin{equation}
\label{eq:1st-eigen}
\mu = \int^1_0 \beta^2 \bigl( 1 - \ip{\eta,T}^2 \bigr) \, du.  
\end{equation}
If $\mu=0$, then it follows from \eqref{eq:1st-eigen} that $\ip{\eta , T(u)} \equiv 1$ and $T(u)=\pm \eta$ for all $u$. But this is impossible because $\gamma$ is closed and $T$ is continuous. Hence
\begin{equation}\label{eq:Wbound}
\| W^{-1} \|:= \sup_{v \in \R^n} \frac{| W^{-1} v|}{\abs{v}} = \sup_{x \in \R^n}\frac{\abs{x}}{\abs{Wx}}= \dfrac{1}{\mu} < \infty. 
\end{equation}
Therefore Lemma~\ref{lemma:W^-1-bdd-1} follows. 
\end{proof}

\begin{lemma} \label{lemma:W^-1-bdd-2}
Let $U\subset \Omega$ such that $\norm{\alpha}_{H^2}<\infty$ for all $\alpha\in U$.
Then there exists a constant $C>0$ such that 
\[
\| W_\alpha^{-1} \| < C
\]
for all $\alpha\in U$, where $W_\alpha$ denotes the matrix defined by \eqref{eq:gramian} with $\gamma=\alpha$.  
\end{lemma}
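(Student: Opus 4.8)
The plan is to obtain Lemma~\ref{lemma:W^-1-bdd-2} as a \emph{uniform} version of Lemma~\ref{lemma:W^-1-bdd-1} by a compactness argument. First I would recast the conclusion in terms of eigenvalues: as in the proof of Lemma~\ref{lemma:W^-1-bdd-1}, $W_\alpha$ is symmetric and positive semidefinite, so $\|W_\alpha^{-1}\|=1/\mu_\alpha$ with $\mu_\alpha$ its smallest eigenvalue; and writing $T_\alpha:=\alpha'/|\alpha'|$ and using that $\{T_\alpha,\nu_1,\dots,\nu_{n-1}\}$ is orthonormal (so $\sum_j\langle\eta,\nu_j\rangle^2=1-\langle\eta,T_\alpha\rangle^2$ for $|\eta|=1$), one has the variational formula
\[
\mu_\alpha=\min_{\eta\in\R^n,\ |\eta|=1}\ \int_0^1\beta^2\bigl(1-\langle\eta,T_\alpha\rangle^2\bigr)\,du .
\]
Thus it is enough to show $\inf_{\alpha\in U}\mu_\alpha>0$.

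For this I would argue by contradiction. Suppose there are $\alpha_i\in U$ with $\mu_{\alpha_i}\to 0$. Since $U$ is $H^2$-bounded and the imbedding $H^2(S^1,\R^n)\subset C^1(S^1,\R^n)$ is compact, after passing to a subsequence $\alpha_i\to\alpha_\infty$ in $C^1$, so in particular $\alpha_i'\to\alpha_\infty'$ uniformly. Using that $|\alpha'(u)|=\length(\alpha)$ is bounded below on $U$ by some $\ell_0>0$ (as holds in the situation where the lemma is applied, cf.\ Lemma~\ref{estdjpr}), we get $|\alpha_\infty'|\equiv\lim_i\length(\alpha_i)\ge\ell_0$, hence $\alpha_\infty\in\I^2(S^1,\R^n)$, and moreover $T_{\alpha_i}\to T_{\alpha_\infty}$ in $C^0$ because $x\mapsto x/|x|$ is Lipschitz on $\{|x|\ge\ell_0\}$. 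Then the functions $\eta\mapsto\int_0^1\beta^2(1-\langle\eta,T_{\alpha_i}\rangle^2)\,du$ converge uniformly on the compact sphere $\{|\eta|=1\}$, so their minima converge: $\mu_{\alpha_i}\to\mu_{\alpha_\infty}$. But Lemma~\ref{lemma:W^-1-bdd-1} applied to $\alpha_\infty$ gives $\mu_{\alpha_\infty}=1/\|W_{\alpha_\infty}^{-1}\|>0$, a contradiction. Hence $\inf_{\alpha\in U}\mu_\alpha>0$, and so $\|W_\alpha^{-1}\|=1/\mu_\alpha$ is bounded on $U$.

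The step I expect to be the crux is keeping the limit curve immersed. The formula for $\mu_\alpha$ only sees the unit tangent $T_\alpha$, and without a uniform positive lower bound on $|\alpha'|$ over $U$ one can build $H^2$-bounded sequences in $\Omega$ with $\length(\alpha_i)\to 0$ along which the tangent directions $T_{\alpha_i}$ do not converge and $\mu_{\alpha_i}\to 0$; so the hypothesis has to be understood to include $\inf_{\alpha\in U}|\alpha'|>0$. Granting that, the remaining ingredients — the eigenvalue reformulation, the $C^1$-compactness, and the continuity of a minimum over a compact set under uniform convergence — are routine.
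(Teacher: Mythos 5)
Your argument is correct and shares the paper's overall skeleton (argue by contradiction, use $H^2$-boundedness and compactness of the Sobolev imbedding, and conclude with the closed-curve argument that the limiting smallest eigenvalue cannot vanish), but the key convergence step is handled by a genuinely different and leaner device. The paper carries the whole moving frame to the limit: it shows the normal fields $\nu_i^j$ solving \eqref{frame} are bounded in $H^1$, extracts weak $H^1$/strong $C^\delta$ limits, verifies that the limits solve the limiting frame equation and stay orthonormal, and only then gets $W_{\alpha_i}\to W_{\alpha_\infty}$ and $\mu_i\to\mu_\infty$. Your observation that $\eta^T W_\alpha\,\eta=\int_0^1\beta^2\bigl(1-\ip{\eta,T_\alpha}^2\bigr)\,du$ for \emph{every} unit $\eta$ (the paper only uses this at an eigenvector), combined with the Rayleigh characterization of the smallest eigenvalue, makes $\mu_\alpha$ a functional of the unit tangent alone, so uniform convergence $T_{\alpha_i}\to T_{\alpha_\infty}$ suffices and the frame-limit machinery disappears; that is a real simplification. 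Two minor touch-ups: to apply Lemma \ref{lemma:W^-1-bdd-1} to $\alpha_\infty$ you should record that $\alpha_\infty\in H^2$ (weak limit of the bounded sequence), or bypass Lemma \ref{lemma:W^-1-bdd-1} entirely by repeating the argument after \eqref{eq:1st-eigen}: if the limiting minimum were zero then $\ip{\eta,T_{\alpha_\infty}}^2\equiv1$ wherever $\beta\neq0$, hence everywhere by continuity, contradicting closedness since $\int_0^1\alpha_\infty'\,du=0$ and $|\alpha_\infty'|\ge\ell_0$.

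Your caveat about the lower bound on $|\alpha'|$ is apt. The hypothesis as written only gives $H^2$-boundedness, and the paper's own proof tacitly assumes the $C^1$-limit is an immersion (it defines $T_\infty=\alpha_\infty'/|\alpha_\infty'|$ and deduces the lower bound on $|\alpha_i'|$ from a $C^1$-ball around $\alpha_\infty$); moreover, $H^2$-bounded sequences in $\Omega$ of shrinking needle-like curves, whose tangents equal $\pm e_1$ outside caps of vanishing parameter measure, do make the smallest eigenvalue of $W_{\alpha_i}$ tend to zero, so some nondegeneracy assumption such as $\inf_{\alpha\in U}|\alpha'|>0$ is genuinely needed. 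Since in its only application, Lemma \ref{estdjpr}, the bound $0<c_1<|\alpha'(u)|$ is explicitly assumed, reading the hypothesis that way is consistent with how the lemma is used.
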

\begin{proof} Suppose there is no such $C$, and therefore there exists a sequence $(\alpha_i)\subset U$ such that $\norm{W_{\alpha_i}^{-1}}\to \infty$. Equivalently, abbreviating to $W_i$ and letting $\mu_i$ be the smallest eigenvalue of $W_i$, by \eqref{eq:Wbound} $\mu_i\to 0$. We will show that this leads to a contradiction.

Since the sequence $(\alpha_i)$ is bounded in $H^2(S^1,\R^n)$, there exists $\alpha_\infty \in H^2(S^1,\R^n) \cap C^{1+\delta}(S^1,\R^n)$ such that 
\begin{equation}
\label{conv-1}
\begin{aligned}
& \alpha_i \rightharpoonup \alpha_\infty \quad \text{weakly in} \quad H^2(S^1,\R^n), \\
& \alpha_i \to \alpha_\infty \quad \text{in} \quad C^{1+\delta}(S^1,\R^n), 
\end{aligned}
\end{equation}
up to a subsequence, where $\delta \in (0, 1/2)$. 
This implies that 
\[
T_i := \dfrac{\alpha_i'}{|\alpha_i'|} \to T_\infty := \dfrac{\alpha_\infty'}{|\alpha_\infty'|} \quad \text{in} \quad C^\delta(S^1,\R^n). 
\]
As in \eqref{frame} we construct an orthonormal frame $\{ T_i, \nu_i^1, \ldots, \nu_i^{n-1} \}$ along $\alpha_i$ by starting with an orthonormal basis $\{T_i(0),\nu_{i,0}^1,\ldots,\nu_{i,0}^{n-1}\}$ and letting
 $\{ \nu_i^1, \ldots, \nu_i^{n-1} \}$ be the solutions of 
\begin{equation}
\label{eq:220314-2}
 (\nu_i^j)'= - \dfrac{1}{|\alpha_i' |^2} \langle (\nu_i^j), \alpha_i'' \rangle \alpha_i', \quad 
\nu_i^j(0) = \nu_{i,0}^j 
\end{equation} 
for $j \in \{1, \ldots, n-1\}$. 
 
Since $|\nu_i^j| \equiv 1$, choosing $N$ sufficiently large so that for $i>N$, $\alpha_i$ is contained in a $C^1$-ball centred at $\alpha_\infty$ and therefore $|\alpha_i'(u)|$ is bounded below, we have that 
\[
\| (\nu_i^j)' \|_{L^2} \le C \| \alpha_i'' \|_{L^2(S^1)} < C  
\]
for all $i >N$ and $j \in \{ 1, \ldots, n-1\}$. 
Thus $( \nu^j_i )$ is bounded in $H^1(S^1,\R^n)$, and then we find $\{ \nu_\infty^1, \ldots, \nu_\infty^{n-1}\}$ such that 
\begin{equation}
\label{conv-2}
\begin{aligned}
& \nu_i^j \rightharpoonup \nu_\infty^j \quad \text{weakly in} \quad H^1(S^1,\R^n), \\
& \nu_i^j \to \nu_\infty^j  \quad \text{in} \quad C^{\delta}(S^1),  
\end{aligned}
\end{equation}
up to a subsequence. We note here that $|\nu_{\infty}^j| \equiv 1$. 
By \eqref{eq:220314-2} we have 
\begin{equation}
\label{eq:220314-3}
\nu_i^j(u) = \nu_i^j(0) - \int^u_0 \dfrac{1}{| \alpha_i'(\tilde{u}) |^2} \langle \nu_i^j(\tilde{u}), \alpha_i''(\tilde{u}) \rangle \alpha_i'(\tilde{u}) \, d\tilde{u}. 
\end{equation}
Letting $i \to \infty$ in \eqref{eq:220314-3}, we observe from \eqref{conv-1} and \eqref{conv-2} that 
\[
\nu_\infty^j(u) = \nu_\infty^j(0) 
- \int^u_0 \dfrac{1}{|\alpha_\infty'(\tilde{u}) |^2} \langle \nu_{\infty}^j(\tilde{u}), \alpha_\infty''(\tilde{u}) \rangle  \alpha_\infty'(\tilde{u}) \, d\tilde{u}
\]
for $j \in \{1, \ldots, n-1\}$, where  $\{ T_\infty(0), \nu_\infty^1(0), \ldots, \nu_\infty^{n-1}(0)\}$ is an orthonormal basis at $\alpha_\infty(0)$  because of the strong convergence in \eqref{conv-1} and \eqref{conv-2}.
Then we can check as in \eqref{frame} that $\{ T_\infty(u), \nu_\infty^1(u), \ldots, \nu_\infty^{n-1}(u)\}$ is an orthonormal frame at $\alpha_\infty(u)$ for all $u \in S^1$. 

Now we have
\[
B_i:= \beta \bigl[ \nu_{i}^1\, \nu_{i}^2 \ldots \nu_{i}^{n-1} \bigr], \qquad 
W_i := \int^1_0 B_i B_i^T \, du, 
\]
and let $\eta_i$ be a unit eigenvector corresponding to the smallest eigenvalue  $\mu_i$ of $W_i$.
By \eqref{conv-2} we see that 
\[
W_i \to W_\infty := \int^1_0 B_\infty B_\infty^T \, du
\] 
with  
\[
B_\infty := \beta \bigl[ \nu_{\infty}^1 \nu_{\infty}^2 \ldots \nu_{\infty}^{n-1} \bigr]. 
\]
Thus we find $\mu_\infty \in \mathbb{R}$ such that 
\[
\mu_i \to \mu_\infty  \quad \text{as} \quad i \to \infty, 
\]
and $\mu_\infty$ is the smallest eigenvalue of $W_\infty$. 
Let $\eta_\infty $ be the unit eigenvector corresponding to $\mu_\infty$,  
then as in \eqref{eq:1st-eigen}, we have 
\begin{equation}
\label{eq:1st-eigen-2}
\mu_\infty= \int^1_0 \beta^2 \Bigl ( 1 - \ip{\eta_\infty , T_\infty }^2 \Bigr ) \, du. 
\end{equation}
Repeating the argument following \eqref{eq:1st-eigen}, $\mu_\infty$ cannot be zero, and we have our contradiction to the assumption that there is no $C$ bounding $\norm{W_\alpha^{-1}}$.

\end{proof}

\end{appendix}

\bibliography{okabe-schrader.bib}
\bibliographystyle{acm}


\end{document}